\DeclareMathOperator{\Hom}{\mathsf{Hom}}
\DeclareMathOperator{\ev}{\mathrm{ev}}
\DeclareMathOperator{\sm}{sm}
\DeclareMathOperator{\ac}{ac}
\def\llp{\mathopen{(\!(}}
\def\llb{\mathopen{[\![}}
\def\rrp{\mathopen{)\!)}}
\def\rrb{\mathopen{]\!]}}
\DeclareMathOperator{\ooe}{\overline{e}}
\DeclareMathOperator{\A}{\mathsf{A}}
\DeclareMathOperator{\Oc}{\mathcal{O}}
\DeclareMathOperator{\Es}{\mathsf{E}}
\renewcommand{\P}{\mathsf{P}}
\renewcommand{\lim}{\mathsf{lim}}
\newcommand{\wh}{\widehat}
\DeclareMathOperator{\id}{id}
\DeclareMathOperator{\op}{\mathsf{op}}
\DeclareMathOperator{\Gal}{Gal}
\DeclareMathOperator{\Aut}{\mathsf{Aut}}
\DeclareMathOperator{\PGL}{PGL}
\DeclareMathOperator{\SL}{SL}
\DeclareMathOperator{\M}{M}
\DeclareMathOperator{\Spec}{\mathsf{Spec}}
\DeclareMathOperator{\ord}{ord}
\newcommand{\BA}{{\mathbb{A}}}
\newcommand{\BC}{{\mathbb{C}}}
\newcommand{\BF}{{\mathbb{F}}}
\newcommand{\BL}{{\mathbb{L}}}
\newcommand{\BM}{{\mathbb{M}}}
\newcommand{\BP}{{\mathbb{P}}}
\newcommand{\BX}{{\mathbb{X}}}
\newcommand{\BZ}{{\mathbb{Z}}}
\newcommand{\FA}{{\mathcal A}}
\newcommand{\FC}{{\mathcal C}}
\newcommand{\FM}{{\mathcal M}}
\newcommand{\FN}{{\mathcal N}}
\newcommand{\FP}{{\mathcal P}}
\DeclareMathOperator{\DA}{DA^{\acute{e}t}}
\DeclareMathOperator{\DAC}{DA^{\acute{e}t}_{ct}}
\newcommand{\QQ}{\mathbb{Q}}
\newcommand{\Var}{\mathrm{Var}}
\newcommand{\cM}{\mathcal{M}}
\newcommand{\cC}{\mathcal{C}}
\newcommand{\dpl}{\mathcal{L}_{\mathrm{DP}}}
\newcommand{\dplk}{\mathcal{L}_{\mathrm{DP},k}}
\newcommand{\dplkx}{\mathcal{L}_{\mathrm{DP},k(x)}}
\newcommand{\Def}{\mathrm{Def}}
\newcommand{\GDef}{\mathrm{GDef}}
\newcommand{\RDef}{\mathrm{RDef}}
\newcommand{\acf}{\mathrm{acf}}
\newcommand{\mot}{\mathrm{mot}}
\newcommand{\im}{\mathrm{Im}}
\newcommand{\Pic}{\mathrm{Pic}}
\newcommand{\FPh}{\widehat{\FP}}
\newcommand{\tr}{\mathrm{tr}}
\newcommand{\Nm}{\mathrm{Nm}}
\newcommand{\Stab}{\mathrm{Stab}}
\renewcommand{\M}{\mathrm{M}}
\newcommand{\st}{\mathrm{st}}
\renewcommand{\op}{\overline{\partial}}
\newcommand{\bs}{\boldsymbol}
\newtheorem{theorem}[subsubsection]{Theorem}
\newtheorem{proposition}[subsubsection]{Proposition}
\newtheorem{aproposition}[subsection]{Proposition}
\newtheorem{propdef}[subsubsection]{Proposition-Definition}
\newtheorem{corollary}[subsubsection]{Corollary}
\newtheorem{lemma}[subsubsection]{Lemma}
\theoremstyle{definition}
\newtheorem{definition}[subsubsection]{Definition}
\newtheorem{construction}[subsubsection]{Construction}
\newtheorem{rmk}[subsubsection]{Remark}
\newtheorem{rmks}[subsubsection]{Remarks}
\numberwithin{equation}{subsection}
\begin{document}

\author{Fran\c {c}ois Loeser}
\address{Institut universitaire de France, Sorbonne Universit\'e, Institut de Math\'ematiques de Jussieu-Paris
Rive Gauche, CNRS. Univ Paris Diderot, Campus Pierre et Marie Curie, case 247, 4 place Jussieu, 75252 Paris cedex 5, France.
}
\email{francois.loeser@imj-prg.fr}
\urladdr{https://webusers.imj-prg.fr/$\sim$francois.loeser/}
\author{Dimitri Wyss}
\address{EPFL/SB/ARG, Station 8, CH-1015 Lausanne, Switzerland
}
\email{dimitri.wyss@epfl.ch}
\urladdr{https://people.epfl.ch/dimitri.wyss}

\title{Motivic integration on the Hitchin fibration}

\maketitle 
\begin{abstract} We prove that the moduli spaces of twisted $\SL_n$ and $\PGL_n$-Higgs bundles on a smooth projective curve have the same (stringy) class in the Grothendieck ring of rational Chow motives. On the level of Hodge numbers this was conjectured by Hausel and Thaddeus, and recently proven by Groechenig, Ziegler and the second author. To adapt their argument, which relies on $p$-adic integration, we use 
a version of motivic integration with values in rational Chow motives  
and  the geometry of N\'eron models to evaluate such integrals on Hitchin fibers.
\end{abstract}

\tableofcontents

\section{Introduction}The main goal of the present work is to provide a proof of a motivic version of the topological mirror symmetry conjecture of
Hausel and Thaddeus \cite{MR1990670}.

\subsection{Reminders on the topological mirror symmetry conjecture}
Let $C$ be a connected, smooth and projective  complex curve  and let $G$ be a Lie group. 
A $G$-Higgs bundle on $C$  is a pair $(E, \theta)$ with $E$ a principal $G$-bundle on $C$
and 
$\theta$ an element of $H^0 (C, \mathrm{ad} \, E \otimes K_C)$,
with  
$\mathrm{ad} \, E = E \otimes_G \mathfrak{g}$ the adjoint vector bundle of $G$ and $K_C$ the canonical bundle of $C$.
By assigning  to $(E, \theta)$ the 
characteristic polynomial of $\theta$, one defines the Hitchin fibration on the corresponding moduli space.
Hausel and Thaddeus conjectured in \cite{MR1990670}  that the moduli spaces of $\mathrm{SL}_n$ and $\PGL_n$-Higgs bundles are mirror partners
in the sense of Strominger-Yau-Zaslow. 
In this case the 
two Hitchin fibrations have the same base and Hausel and Thaddeus proved  that over a dense open set their fibers are torsors under dual abelian varieties.
They also conjectured that the two moduli spaces have the same Hodge numbers, when defined in an
appropriate way.

More precisely, fix a line bundle $L$ of degree $d$ on $C$ with $d$ prime to $n$.
We consider the moduli space $\M_n^L$ of semi-stable $L$-twisted $\mathrm{SL}_n$-Higgs bundles on $C$, which is a smooth quasi-projective variety.
One can also consider
 the moduli space  $\widehat{\M}_n^d$ of semi-stable $\PGL_n$-Higgs bundles of degree $d$  on $C$.
It is an orbifold which can be identified with the geometric quotient of $ \M_n^L$ 
by the natural action of $\Gamma = \Pic(C)[n]$ given by twisting the underlying vector bundle of a Higgs field.

The topological mirror symmetry conjecture of
Hausel and Thaddeus \cite{MR1990670}, now proven by 
Groechenig, Ziegler and the second author in \cite{gwz}, is the following statement (see Theorem \ref{htconj} for a slightly more general statement):

\begin{theorem}\label{tmsintro} Let $d$ be an integer prime to $n$ and $L$ line bundles on $C$ of degree $d$. Then there is an equality of Hodge numbers
\begin{equation}\label{tmsintrof}
h^{p,q} (\M_n^L) = 
h^{p,q}_{\st} (\widehat{\M}_n^{d}, \alpha_{L}^d).
\end{equation}
\end{theorem}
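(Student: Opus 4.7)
The plan is to follow the arithmetic strategy of Groechenig--Ziegler--Wyss \cite{gwz}, but with $p$-adic integration replaced by a motivic integration theory taking values in (a suitable localization of) the Grothendieck ring $K_0(\mathrm{Chow}(\QQ))$ of rational Chow motives. The first task is to set up such a theory: one needs a motivic measure on the arc space of a smooth DM stack, together with a motivic transformation rule strong enough to give a stringy motivic class $[\X, \alpha]_\st$ attached to a DM stack $\X$ equipped with a gerbe class $\alpha$, specializing to the stringy Hodge numbers on the right-hand side of \eqref{tmsintrof}. Once this formalism is in place, both $[\M_n^L]$ and $[\widehat{\M}_n^d, \alpha_L^d]_\st$ are expressed as motivic volumes of the arc spaces of appropriate smooth integral models of the two moduli spaces.

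Next, I would use the Hitchin fibrations $\chi_{\SL} \colon \M_n^L \to \mathcal{A}$ and $\chi_{\PGL} \colon \widehat{\M}_n^d \to \mathcal{A}$ over the common Hitchin base $\mathcal{A}$, together with a motivic Fubini theorem, to decompose each of the two volumes as an integral over $\mathcal{A}$ of the motivic volumes of the individual Hitchin fibers, the character $\alpha_L^d$ entering as a twist in the integrand on the $\PGL_n$ side. Over the smooth locus $\mathcal{A}^\sm$ the two generic Hitchin fibers are torsors under dual abelian varieties (the Prym variety and its dual), and the pointwise equality of motivic integrals follows from a motivic Tate-duality statement: the twist by the tautological gerbe precisely accounts for the passage from an abelian variety to its dual at the level of motivic volumes.

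The main difficulty, as in \cite{gwz}, is to extend this pointwise comparison across the discriminant locus in $\mathcal{A}$, where the spectral curve is singular and the Hitchin fibers degenerate into compactified-Jacobian-type objects. This is where the geometry of N\'eron models enters: one replaces such a singular fiber by the N\'eron model of its smooth locus, and exploits the fact that the N\'eron model of an abelian variety and that of its dual control the relevant motivic volumes in a dual fashion, up to a correction encoded by the component groups. The crux of the argument is a motivic computation showing that, after inserting the character $\alpha_L^d$, this N\'eron-model correction on the $\PGL_n$ side matches the one on the $\SL_n$ side, thereby preserving the fiberwise equality across the whole of $\mathcal{A}$.

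Putting everything together yields a motivic equality $[\M_n^L] = [\widehat{\M}_n^d, \alpha_L^d]_\st$ in the relevant ring of motives; since the Hodge realization factors through this ring, passage to Hodge numbers then gives \eqref{tmsintrof}. The step I expect to be the main obstacle is the motivic Tate-duality statement in the presence of non-trivial component groups: the arithmetic version in \cite{gwz} rests on Poisson summation over finite abelian groups, and finding a motivic substitute requires a direct geometric analysis of the stratifications of the two dual N\'eron models compatible with the gerbe $\alpha_L^d$. Controlling this comparison for all spectral curves appearing in $\mathcal{A}$, and in a way that is uniform enough to fit into the motivic integration formalism, is where the bulk of the work will lie.
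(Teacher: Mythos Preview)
Your overall architecture matches the paper's: motivic integration in place of $p$-adic integration, Fubini along the Hitchin fibration, a fiberwise comparison involving N\'eron models, and passage to Hodge numbers at the end. Two of your technical expectations, however, diverge from what the paper actually does, and one of them is exactly the crux.

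First, the locus you worry about is not the one that matters. The paper base-changes to $k\llb t\rrb$ and integrates over the subassignment $\FA^\flat$ of arcs whose \emph{generic} fiber lands in $\A^{\sm}$; the complement has smaller $K$-dimension and hence measure zero, so there is no need to extend the comparison across the discriminant. N\'eron models enter not to handle singular spectral curves, but because for $a\in\FA^\flat$ the Prym $\FP_a$ over $K\llp t\rrp$ may have bad reduction, and its N\'eron model over $K\llb t\rrb$ is what computes the fiber integral (Remark~\ref{nerint}, Proposition~\ref{lhseasy}).

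Second, and this is the point you flagged as the main obstacle: the paper does \emph{not} establish a motivic Tate duality. It says explicitly that Tate duality is unavailable in this setting and argues instead directly with the Weil pairing on $\Gamma$ together with the component-group structure of the N\'eron model. The replacement for the character-sum argument of \cite{gwz} is Corollary~\ref{csum}: the $\varrho_\gamma$-isotypical component of the Chow motive of $\FN(\FP_a)_k$ vanishes whenever $\gamma\notin\im(\partial_\phi)$. This comes from two elementary inputs: translation by a torsion point acts trivially on the Chow motive of a connected commutative algebraic group (Corollary~\ref{autab}), so the subgroup $\Gamma^0\subset\Gamma$ sitting in the identity component acts trivially; and $\Gamma^0$ is the exact annihilator of $\im(\partial_\phi)$ under the Weil pairing (Lemma~\ref{orthor}), so $\varrho_\gamma$ is nontrivial on $\Gamma^0$ precisely when $\gamma\notin\im(\partial_\phi)$. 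The unramified and ramified cases of the fiberwise identity (Propositions~\ref{unrc} and~\ref{racase}) then fall out without any Poisson summation or duality for torsors. A smaller adjustment: rather than a motivic measure on DM stacks twisted by a gerbe, the paper works with a $\Gamma$-equivariant theory of constructible motivic functions and reads the gerbe twist as taking isotypical components of Chow motives (Section~\ref{equicase}, Theorem~\ref{equivorb}); the Appendix matches this with the original gerbe formulation.
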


A few words are in order to explain the meaning of the right hand side of this equality. 
Stringy Hodge are invariants introduced by Batyrev \cite{batyrevstringy}\cite{batyrevjems} 
for algebraic varieties with log terminal singularities. They  are  especially useful to extend topological mirror symmetry 
for pairs of singular Calabi-Yau varieties beyond the smooth case. 
The numbers appearing on the right hande side are twisted versions of the 
stringy Hodge numbers of $\widehat{\M}_n^{d}$. The twisting involves a $\mu_n$-gerbe
$\alpha_{L}$ on $\widehat{\M}_n^{d}$ that roughly speaking describes
the  duality between generic Hitchin fibers, which are torsors under dual abelian varieties. Concretely $\alpha_{L}$ has the effect of singling out certain isotypical components as we explain next.

\subsection{Hodge-Deligne polynomials} In this paragraph we assume $k=\BC$. Let  $M$ be a smooth connected variety over $k$ with an action of a finite abelian group $\Gamma$ preserving its canonical bundle.
For  $\gamma \in \Gamma$ and  $x$ a point in $M$ fixed by $\gamma$, one defines an integer
$w_x(\gamma)$ in terms of the eigenvalues of $\gamma$ acting on the tangent space at $x$, cf. \ref{weight}. In the cases we will consider 
$w_x (\gamma)$ is constant on the fixed point set $M^\gamma$ for each $\gamma \in \Gamma$. Under this assumption, the stringy $E$-polynomial of the quotient $M/\Gamma$ is given by
\[ E_{\st}(M /\Gamma;x,y) = \sum_{\gamma \in \Gamma} (xy)^{\dim M-w(\gamma)} E(M^\gamma/\Gamma;x,y),\]
with $E$ the usual Hodge-Deligne polynomial.

We now specialize to the case  when
 $M = \M_n^L$ and $\Gamma = \Pic^0(C)[n]$. Let
$\varrho= \langle \cdot,\cdot \rangle: \Gamma \times \Gamma \to \mu_n$
be the Weil pairing on $\Gamma = \Pic^0(C)[n]$ and $\varrho_\gamma = \langle \gamma, \cdot \rangle : \Gamma \to \mu_n$ the character induced by $\gamma \in \Gamma$. For any integer $s$ we obtain a twisted stringy $E$-polynomial
\begin{equation}\label{twistedE} E_{\st}^{\varrho^{s}}(\widehat{\M}_n^d;x,y) =
\sum_{\gamma \in \Gamma} (xy)^{\dim \M_n^L -w(\gamma)} E^{\varrho_{\gamma}^{s}} (\M_n^{L, \gamma};x,y),
\end{equation}
with $E^{\varrho_{\gamma}^{s}} (\M_n^{L, \gamma};x,y)$ the $E$-polynomial of the $\varrho_{\gamma}^{s}$-isotypical component of the cohomology with compact supports of $\M_n^{L, \gamma}$.
As recalled in Section \ref{sec5},
(\ref{tmsintrof}) can be restated as the following identity between $E$-polynomials:
\begin{equation}\label{tmsintrofbis}
E (\M_n^L; x, y)
= E_{\st}^{\varrho^{-1}}(\widehat{\M}_n^{d};x,y).
\end{equation}

\subsection{The main result}
Let $r$ be  the order of $\Pic(C)[n]$ and set $\Lambda = \mathbb{Q} (\mu_r)$.
We denote by
$\mathrm{M_{rat}} (k, \Lambda)$ the category of Chow motives over $k$ with coefficients in $\Lambda$, and by
$K_0 (\mathrm{M_{rat}} (k, \Lambda))$ its Grothendieck ring. 
We denote by $[\M_n^L]$ the virtual Chow motive of $\M_n^L$ in $K_0 (\mathrm{M_{rat}} (k, \Lambda))$.
One may also define by a formula similar to (\ref{twistedE}) a virtual Chow motive
$[\widehat{\M}_n^d]_{\st}^{\varrho^{s}} \in K_0 (\mathrm{M_{rat}} (k, \Lambda))$.

We set 
\[K_0 (\mathrm{M_{rat}} (k, \Lambda))_{\mathrm{loc}} :=
K_0 (\mathrm{M_{rat}} (k, \Lambda)) \otimes_{\mathbb{Z} [\mathbb{L}, \mathbb{L}^{-1}]}    \mathbb{A},\]
with $\mathbb{A} = \mathbb{Z} \Bigl[\mathbb{L}, \mathbb{L}^{-1}, \Bigl(\frac{1}{1- \mathbb{L}^{-i}}\Bigr)_{i >0}\Bigr]$,
where $\mathbb{L}$ denotes the class of the Lefschetz motive,
and we denote by 
$\vartheta: K_0 (\mathrm{M_{rat}} (k, \Lambda)) \to K_0 (\mathrm{M_{rat}} (k, \Lambda))_{\mathrm{loc}}$ the localization morphism.

We can now state the main result of this paper:

\begin{theorem}\label{mainintro}
The equality
\begin{equation}\label{mainintrof}\vartheta ([\M_n^L])
=
\vartheta ([\widehat{\M}_n^{d'}]_{\st}^{\varrho^{-1}})
\end{equation}
holds in $K_0 (\mathrm{M_{rat}} (k, \Lambda))_{\mathrm{loc}}$.
\end{theorem}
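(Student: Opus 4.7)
The strategy is to transport the $p$-adic integration argument of \cite{gwz} to the motivic setting, replacing $p$-adic volumes by classes in $K_0(\mathrm{M_{rat}}(k,\Lambda))_{\mathrm{loc}}$. The two sides of (\ref{mainintrof}) are realised as motivic integrals over a common base, namely the Hitchin base $\mathcal{A}$, and then equated by a fiberwise comparison built on the self-duality of Hitchin fibers under dual abelian varieties.

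First I would develop a theory of motivic integration taking values in $K_0(\mathrm{M_{rat}}(k,\Lambda))_{\mathrm{loc}}$. The inversion of the factors $1-\mathbb{L}^{-i}$ in $\mathbb{A}$ is precisely what is needed to make sense of integrals of standard motivic functions on arc spaces, while the enlargement of coefficients to $\Lambda = \QQ(\mu_r)$ allows integrals to be decomposed along characters of $\Gamma=\Pic(C)[n]$, mirroring the $\mu_n$-valued Weil pairing that appears in the statement. On the $\PGL_n$-side I would use an orbifold/stringy extension, in the spirit of the Denef-Loeser motivic McKay correspondence, for the quotient stack $[\M_n^L/\Gamma]$. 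By construction this produces $[\widehat{\M}_n^{d'}]_{\st}^{\varrho^{-1}}$ as an integral over the twisted-sector inertia, weighted by $\varrho^{-1}$ and by the age function $w(\gamma)$ appearing in (\ref{twistedE}).

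Next I would push both integrals forward along the Hitchin fibration, reducing (\ref{mainintrof}) to a pointwise equality of motivic fiber integrals over $\mathcal{A}$. Over the generic (elliptic/anisotropic) locus $\mathcal{A}^{\diamond}\subset\mathcal{A}$, where the Hausel-Thaddeus construction identifies the two Hitchin fibers as torsors under dual abelian varieties, the sought equality follows from character orthogonality applied to the Weil pairing. The substantial work lies over $\mathcal{A}\setminus\mathcal{A}^{\diamond}$, where the fibers degenerate. Here, following the template of \cite{gwz}, the motivic volume of a Hitchin fiber should be computed via the N\'eron model of the generic fiber base-changed to the henselisation of the local ring of $\mathcal{A}$ at the point in question, the contribution being a sum over connected components of that N\'eron model, weighted by the induced $\Gamma$-action.

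The heart of the proof, and its main obstacle, is then a purely local assertion: character-by-character against $\varrho^{-1}$, the N\'eron-component weights on the $\SL_n$ and $\PGL_n$ sides agree. This is the motivic incarnation of the key lemma of \cite{gwz} and should rest on two ingredients: a motivic change-of-variables formula adapted to N\'eron models (explaining why $\mathbb{A}$ is the natural coefficient ring), and the identification of the component group schemes of dual degenerating abelian varieties together with the compatibility of the Weil pairing with those component groups. Once this local equality is established, reassembling via the pushforward along $\mathcal{A}$ and using the decomposition (\ref{twistedE}) on the $\PGL_n$ side yields Theorem \ref{mainintro}.
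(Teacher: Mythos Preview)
Your proposal captures the paper's strategy correctly in its essentials: a motivic integration theory valued in Chow motives, an orbifold volume formula for the $\PGL_n$-side, Fubini along the Hitchin map, and N\'eron models combined with the Weil pairing for the fiberwise comparison. Two points of the framing deserve correction, though neither is a fatal gap.

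First, the dichotomy ``easy over $\mathcal{A}^\diamond$, hard over $\mathcal{A}\setminus\mathcal{A}^\diamond$'' is not how the argument is organised. Everything is base-changed to $k\llb t\rrb$ from the outset; a ``point'' of the Hitchin base is then an arc $a:\Spec K\llb t\rrb\to\FA$, and the locus of arcs whose \emph{generic} point lies outside $\A^{sm}$ has measure zero and is simply discarded. There is no separate analysis of degenerate Hitchin fibers over $k$. For every remaining arc the fiber $\FM_{n,a}^L$ is a torsor under an abelian variety over $K\llp t\rrp$, and its N\'eron model over the DVR $K\llb t\rrb$ enters uniformly. Note that your ``henselisation of the local ring of $\mathcal{A}$ at the point'' is not a DVR once $\dim\mathcal{A}>1$, so N\'eron models in the classical sense would not be available there; the passage to one-parameter arcs is what makes the machinery apply.

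Second, the fiberwise equality is not arranged as a character-by-character matching of component weights but as a dichotomy in the torsor: either $\FM_{n,a}^L$ is unramified, in which case both sides equal $\BL^{-\ord_\FN\omega_a^L}[\FN(\FM_{n,a}^L)_K]$, or it is ramified and both sides vanish. The vanishing (Corollary~\ref{csum}) uses that translation by a torsion point acts trivially on the Chow motive of a connected commutative algebraic group (Corollary~\ref{autab}), together with the fact that $\Gamma^0\subset\Gamma(K\llp t\rrp)$ and $\im(\partial_\phi)\subset H^1(K\llp t\rrp,\Gamma)$ are exact annihilators under the cup-product pairing coming from the self-dual isogeny $\FP_a\to\widehat{\FP}_a$ (Lemma~\ref{orthor}). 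This orthogonality, proved via the Grothendieck pairing on component groups, is what replaces the Tate duality of \cite{gwz}, which has no direct motivic analogue.
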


Equality (\ref{tmsintrofbis}) follows from (\ref{mainintrof}) when applying the $E$-polynomial to both sides.

\subsection{Strategy of proof}The strategy of the proof of (\ref{tmsintrof}) in \cite{gwz} is very roughly the following. 
After spreading out one may assume all the data to be defined over of  subalgebra $R$ of finite type over $\mathbb{Z}$.
Using $p$-adic Hodge theory,  one reduces the proof of    (\ref{tmsintrof}) to proving that, 
for every ring morphism
$R \to \mathbb{F}_q$,
\[\# \M_n^L (\mathbb{F}_q)= 
\#_{\st} (\widehat{\M}_n^d, \alpha_L) (\mathbb{F}_q),\]
where on the right hand side the stringy number of points  $\#_{\st}$ is defined in a  way much similar to the stringy Hodge polynomial, and the twisting by $\alpha_L$ has the effect of replacing
expressions involving number of points  by character sums.
The next step is to move from an equality between number of points over finite fields to an equality between $p$-adic integrals. That is for a non-archimedean local field $F$ with ring of integers $\Oc_F$ and residue field $\BF_q$ one has to prove 
\begin{equation}\label{padic}
\int_{\M_n^L (\Oc_F)} d\mu
=
\int_{\widehat{\M}_n^d (\Oc_F)} f_{\alpha_L^d} d\mu_{orb}. 
\end{equation}
Here $d\mu$ is the canonical $p$-adic measure on $\M_n^L (\Oc_F)$, 
 $d\mu_{orb}$ the corresponding orbifold measure on
$\widehat{\M}_n^d (\Oc_F)$ and $f_{\alpha_L^d}$ a certain function assigned to $\alpha^d_L$.
That this reduction is possible is due to the fact that  the $p$-adic volume of an orbifold over the ring of integers of a non-archimedean local field can be expressed in terms of
stringy point-counting over the residue field. In fact, similar statements for motivic  volumes  can already be found in \cite{DL2002}\cite{yasuda1}\cite{yasuda2} in connection with the McKay correspondence. The proof of (\ref{padic}) proceeds by using Fubini for the two Hitchin fibrations.
Outside a bad locus  of measure zero in the common base of the two Hitchin fibrations, the fibers have a good behaviour, in particular they are torsors under dual abelian varieties. By Fubini it is thus enough to 
prove that for any point $a$ in the base which does not belong to the bad locus, the two fiber integrals over $a$ with respect to  the relative measures  are equal.
This is where Tate Duality for abelian varieties over non-archimedean local fields enters the game.
For instance, when the fiber of $\M_n^L (\Oc_F)$ over $a$ has no rational point, in which case the corresponding fiber integral is zero, the authors of \cite{gwz} are able to show using 
  Tate Duality that the function $f_{\alpha_L^d}$ behaves like a non-trivial character on the fiber over
  $\widehat{\M}_n^d (\Oc_F)$, which implies that the second fiber integral is also zero.

\medskip
In the present paper we use a 
variant of the theory of motivic integration developed by Cluckers and the first author in  \cite{CL-2008}, but for functions 
with values in a Grothendieck rings of  Chow motives with coefficients in a characteristic zero field $\Lambda$ containing enough roots of unity.
It satisfies a Fubini theorem, as does the original theory, which is crucial for our needs.
One of the important features of working  with  Chow motives is that 
in the presence of the action of finite group action, Chow motives may decomposed 
into isotypical components. They can be viewed as motivic analogues of 
character sums over finite fields.
This analogy works quite well,  for instance there is a motivic version of the fact that non-trivial characters sums on commutative algebraic groups are zero. 

\medskip

Our strategy for proving our main result follows the main lines of \cite{gwz}, replacing $p$-adic integration by motivic integration.
We start by expressing  the equality  we want to prove as an equality between motivic integrals on moduli spaces of Higgs bundles over $k \llb t \rrb$. In order to extract twisted stringy invariants from the motivic volume of $\widehat{\M}_n^d$ we prove a equivariant volume formula for orbifolds, Theorem \ref{equivorb}, which might be of independent interest.
Applying Fubini to the corresponding Hitchin fibrations one then  reduces the proof of the equality to comparing fiber integrals outside a bad locus  of measure zero in the common base of the  Hitchin fibrations. As Tate duality is not available in our context, we then argue directly with the Weil pairing on $\Gamma$ and its interaction with N\'eron models 
of generic Hitchin fibers.

\subsection{Organization of the paper}The paper is organized as follows. 
Section \ref{sec2} is devoted to the development of the Chow motive  variant of the theory in  \cite{CL-2008}  and contains also some reminders and complements about Chow motives and motivic integration. 
In Section \ref{sec3} we  study motivic volumes of orbifolds over $k \llb t \rrb$  in this framework. We show in particular that
these can be computed on the coarse moduli space of the inertia stack
of the special fiber. This can be seen as an extension of previous work in \cite{DL2002}\cite{yasuda1}\cite{yasuda2}.
Section \ref{sec4} is devoted  to N\'eron models  and their relation to pairings arising from self-dual isogenies.
Our main result,
Theorem \ref{premainthm}, is presented in  Section \ref{sec5}.
We  then tie all the preliminary results  together in Section \ref{sec6}, where we complete the proof  of  Theorem \ref{premainthm}.

\bigskip

\subsection*{Acknowledgements} We thank warmly Joseph Ayoub for his keenness in  answering on the spot our naive questions on \'etale motives and Tamas Hausel for pointing out a mistake in an earlier version. The second author is grateful to Michael Groechenig and Paul Ziegler, as he learned many of the ideas used here from the collaborations with them. 
We are also grateful  to the referee for a  careful reading of the paper and many remarks and comments  that were very helpful in the preparation of the final version.

During the preparation of this work the authors were partially supported  by the ANR grant ANR-15-CE40-0008 (D\'efig\'eo).
F.L. was  also supported by 
the Institut Universitaire de France and D.W. by the Fondation Sciences Math\'ematiques de Paris,
under  ANR-10-LABX0098.






\section{Reminders and complements on motivic integration}\label{sec2}

\subsection{\'Etale motives and their Grothendieck rings}
Let $k$ be a field of characteristic zero and 
$S$ a quasi-projective $k$-scheme. Fix a field $\Lambda$ of characteristic zero.
We will work with the
triangulated category $\DA (S, \Lambda)$ of \'etale motives  over $S$ with coefficients in $\Lambda$
introduced by J. Ayoub. It was first introduced  in \cite[D\'ef. 4.5.21]{Ayoub_ast2} (under a different naming)
and its construction is recalled at the beginning of Section 3 of \cite{Ayoub_ENS}.
By \cite[Prop. 3.2]{Ayoub_ENS} and
\cite[Scholie 1.4.2]{Ayoub_ast1}, it is endowed 
with a Grothendieck
six operations formalism. In the paper  \cite{Ayoub_ENS} J. Ayoub 
also introduced 
the full triangulated subcategory $\DAC (S, \Lambda)$
of constructible \'etale motives for which he proved stability under the six operations
 in Th\'eor\`eme 8.10 and Th\'eor\`eme 8.12 of \cite{Ayoub_ENS}.
We also refer to 
\cite{ayoub_icm} for an accessible and  useful  introduction.

We denote by $K_0 (\Var_S)$ the Grothendieck ring of varieties over $S$
and by $\cM_S$ its localization by the  class $\mathbb{L}$ of the affine line over $S$.

In   \cite[Lemma 2.1]{ivo_seb}, Ivorra and Sebag prove
the existence of a unique ring morphism
$$
\chi_{S, c} :
\cM_S \longrightarrow K_0(\DAC (S, \Lambda))
 $$
which assigns to a quasi-projective $S$-scheme $p : X \to S$ the element $\chi_{S, c} ([X]) := [p_! (\mathds{1}_X)]$ in
$K_0( \DAC (S, \Lambda))$.

Now, if $q: S \to T$ is a morphism of quasi-projective $k$-schemes,
the functors
$$q_!: \DAC (S, \Lambda) \to \DAC (T, \Lambda)$$
and
$$q^*: \DAC (T, \Lambda) \to \DAC (S, \Lambda)$$
induce morphisms of Grothendieck rings
$$q_!: K_0( \DAC (S, \Lambda)) \to K_0( \DAC (T, \Lambda))$$
and
$$q^*: K_0( \DAC (T, \Lambda)) \to K_0( \DAC (S, \Lambda)).$$

Composition with $q$ and pullback induce respectively morphisms 
$$q_! : \cM_S \to \cM_T$$
and
$$q^* : \cM_T \to \cM_S.$$

It follows directly from the constructions and Ayoub's six operations formalism for the category
$\DAC (S, \Lambda)$ of constructible \'etale motives as recalled above, 
that
the diagrams 
 \begin{equation}\label{eq1}\xymatrix{  \cM_S \ar[d]^{q_!} \ar[r]^-{\chi_{S, c}} &  K_0( \DAC (S, \Lambda))  \ar[d]^{q_!}  \\
							 \cM_T \ar[r]^-{\chi_{T, c}} & K_0( \DAC (T, \Lambda)) 
}\end{equation}
and
\begin{equation}\label{eq2} \xymatrix{  \cM_T \ar[d]^{q^*} \ar[r]^-{\chi_{T, c}} &  K_0( \DAC (T, \Lambda))  \ar[d]^{q^*}  \\
							 \cM_S \ar[r]^-{\chi_{S, c}} & K_0( \DAC (S, \Lambda)) 
}\end{equation}
commute.

\begin{lemma}\label{evmot}Let $S$ be a quasi-projective $k$-scheme.
Let $\beta \in K_0 (\DAC (S, \Lambda))$ such  that, for every point $i_x : x \hookrightarrow S$,
$i_x^{*} (\beta) = 0$. Then $\beta = 0$.
\end{lemma}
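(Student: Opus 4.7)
The plan is to proceed by Noetherian induction on $S$, reducing $\beta$ at each step to a strictly smaller closed subscheme via the localization triangle. The zero-dimensional base case is immediate: if $S$ is a finite disjoint union of spectra of fields, then the points $x$ are just its connected components and the hypothesis directly forces $\beta = 0$.

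For the inductive step, I would fix a generic point $\eta$ of an irreducible component of $S$. By hypothesis $i_\eta^* \beta = 0$ in $K_0(\DAC(\eta, \Lambda))$. Writing $\eta = \varprojlim_{U \ni \eta} U$ as a cofiltered limit of open neighborhoods with affine transition maps, the continuity property of constructible \'etale motives in Ayoub's framework (see \cite[Scholie 1.4.2]{Ayoub_ast1} and \cite[Prop. 3.2]{Ayoub_ENS}) should give
\[ K_0(\DAC(\eta, \Lambda)) \;=\; \varinjlim_{U \ni \eta} K_0(\DAC(U, \Lambda)),\]
so there is an open neighborhood $U$ of $\eta$ on which $\beta$ already vanishes, i.e.\ $\beta|_U = 0$ in $K_0(\DAC(U, \Lambda))$.

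Writing $j : U \hookrightarrow S$ for this open immersion and $i : Z \hookrightarrow S$ for the complementary closed immersion, the localization triangle $j_! j^* \to \id \to i_* i^*$ would yield in the Grothendieck ring the equality $\beta = j_!(\beta|_U) + i_*(\beta|_Z) = i_*(\beta|_Z)$. The hypothesis on $\beta$ passes to $\beta|_Z \in K_0(\DAC(Z, \Lambda))$: for each point $y$ of $Z$ with embeddings $i_y : y \hookrightarrow S$ and $i'_y : y \hookrightarrow Z$, one has $i_y = i \circ i'_y$ and thus $(i'_y)^*(\beta|_Z) = i_y^* \beta = 0$. Since $Z \subsetneq S$ is a proper closed subscheme, Noetherian induction delivers $\beta|_Z = 0$, whence $\beta = i_*(0) = 0$.

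The hard part is the spreading-out step: one needs the fact that in Ayoub's framework, $K_0$ of constructible \'etale motives is compatible with filtered colimits along systems of open neighborhoods of a point. This reduces to compactness of constructible objects together with continuity of $\DA$ in the source scheme, both standard in the references above. Everything else is a routine use of the localization sequence combined with Noetherian induction.
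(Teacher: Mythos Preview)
Your proof is correct and follows essentially the same strategy as the paper's: spread out the vanishing at a point to a neighborhood using continuity of constructible \'etale motives, then conclude via the localization identity $\beta = j_! j^*\beta + i_! i^*\beta$ and induction. The only cosmetic differences are that the paper works with \emph{all} points at once (hence locally closed rather than open neighborhoods, invoking \cite[Cor.~3.22]{Ayoub_ENS} for continuity) and then inducts on the size of a finite partition into locally closed pieces, whereas you pick a generic point of a component to get an honest open set and run Noetherian induction on the closed complement; both arrangements are standard and equivalent here.
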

\begin{proof}Let   $i_x : x \hookrightarrow S$  be a  a point of $S$. As an
$S$-scheme, it  is the limit of the family of locally closed subsets $C$ of $S$ containing $x$.
Since the family of locally closed subsets of the form $F \cap U$ with $F$ closed and $U$ open affine is cofinal in the family of all such $C$'s,
it follows from  \cite[Cor. 3.22]{Ayoub_ENS}
that
the category
$\DAC (x, \Lambda)$ is equivalent to the $2$-colimit of the categories
$\DAC (C, \Lambda)$. In particular, the ring
$K_0 (\DAC (x, \Lambda))$ is the colimit of the rings
$K_0 (\DAC (C, \Lambda))$
for $C$ running over the locally closed subschemes of $S$ containing  $x$.
Assume now that,  for every point $i_x : x \hookrightarrow S$,
$i_x^{*} (\beta) = 0$. It follows that there exists a cover of
$S$ by locally closed subschemes $C$ such that $i_C^*(\beta)=0$ for every $C$.
We may assume that this cover is finite and that the locally closed subschemes $C$ form a partition of $S$.
One concludes by induction on the cardinality of the cover, by using that if
$j:U\to S$ is an open immersion and  $i:Z\to S$ is the inclusion of the complementary closed subscheme, then
$\beta=j_!j^*(\beta)+i_!i^*(\beta)$.
\end{proof}

\begin{rmk}\label{bonda}When $S = \Spec k$, we shall write $\DAC (k, \Lambda)$, $\cM_k$, etc, for the categories
$\DAC (\Spec k, \Lambda)$, $\cM_{\Spec k}$, etc.
The additive category $\mathrm{M_{rat}} (k, \Lambda)$ of Chow motives over $k$ with coefficients  in $\Lambda$ embeds
in 
$\DAC (k, \Lambda)$ (cf. \cite[2.2.5]{ivo_seb}), so we have a natural ring morphism
$\iota: K_0 (\mathrm{M_{rat}} (k, \Lambda)) \to K_0 (\DAC (k, \Lambda))$.
This morphism is an isomorphism 
since the category $\DAC (k, \Lambda))$
is equivalent to Voevodsky's category
$\mathrm{DM_{gm}}(k,\Lambda)$ by \cite[Appendice B]{Ayoub_ENS} 
and the ring $K_0 (\mathrm{DM_{gm}}(k,\Lambda))$ is isomorphic to
$K_0 (\mathrm{M_{rat}} (k, \Lambda))$, a statement  proved by 
Bondarko in \cite{bondarko} as a consequence
of his theory of weight structures (in loc. cit. only the case of
$\Lambda = \QQ$ is considered, but the proofs carry over for general $\Lambda$).
One can check that
$\chi_{k, c}: \cM_k \to K_0(\DAC (k, \Lambda))$ is equal to
$\iota \circ \chi_c$ with $\chi_c: \cM_k \to K_0 (\mathrm{M_{rat}} (k, \Lambda))$
the morphism considered in
\cite{DL-JAG} (when $\Lambda = \QQ$).
\end{rmk} 
 
\subsection{Constructible motivic functions}
We shall use in this paper the formalism of constructible motivic functions developed by R. Cluckers and the first author in the paper \cite{CL-2008}.
The introduction to the  paper \cite{chl} may also be useful to some readers. Let us review some of the features we will use in this paper.

\subsubsection{The basic framework}
We fix a field $k$ of characteristic $0$ and we work in the Denef-Pas language $\dplk$.
It is a $3$-sorted language in the sense of first order logic,
the sorts being respectively  the valued
field sort, the residue field sort, and the value group sort.
The language consists of the disjoint union of
the language of rings with coefficients in
$k \llp t \rrp$ restricted to the valued field sort, of
the language of rings with coefficients in
$k$ restricted to the residue field sort and of
the language
of ordered groups restricted to the value group sort, together with
two additional symbols of unary functions {$\ac$}  and {$\ord$} from the valued field sort to the
residue field and valued groups sort, respectively.
Furthermore, for the value group sort we add symbols
symbols  $\equiv_n$, for
 $n >1$ in $\mathbb{N}$.

A typical example of $\dplk$-structure is provided by 
$(k \llp t \rrp, k, \mathbb{Z})$
with $\ac$ interpreted as the function
$\ac : k \llp t \rrp \rightarrow k$ assigning to a series its first nonzero coefficient if not zero, zero otherwise, $\ord$ interpreted as the valuation function
$\ord : k \llp t)\rrp \setminus \{0\} \rightarrow \mathbb{Z}$, and $\equiv_n$ interpreted as equivalence relation modulo $n$.
More generally, for any field $K$ containing $k$,
$(K \llp t \rrp, K, \mathbb{Z})$ is naturally an $\dplk$-structure.

The constructions of the paper \cite{CL-2008} take place in  a category 
$\Def_k$, also written $\Def_k(\dpl)$, of definable objects in the language $\dplk$. Objects of $\Def_k$ are called definable subassignments (or definable sets).
They are defined by formulas in $\dplk$ as follows.
Let $\varphi$ be a formula in the language $\dplk$  having
respectively $m$, $n$, and $r$
free variables in the various sorts. To such  a formula $\varphi$
we assign, for every field $K$ containing $k$, the subset $h_{\varphi} (K)$
of $K \llp t \rrp^m \times K^n \times \mathbb{Z}^r$
consisting of all points satisfying $\varphi$.
An object $S$ of $\Def_k$ consists of the
datum of such subsets $h_{\varphi} (K)$ for all $K$ for some $\varphi$.
In particular we may set $S (K) = h_{\varphi} (K)$.
There is also a global variant $\GDef_k$ of $\Def_k$, whose objects are definable subassignments of algebraic varieties defined over $k$, which is defined similarly via affine charts.
For any    $S$ in  $\Def_k$ or  in $\GDef_k$,   a ring $\cC (S)$ of constructible motivic functions is constructed in \cite{CL-2008} and the main achievement of
that paper is the construction of a 
theory of integration for such functions.

\subsubsection{The framework we shall use}
In fact, we will not deal directly with $\Def_k(\dpl)$ in the present paper, but instead we will 
work with  a variant considered in 
\cite[16.2]{CL-2008}. Namely, let
$T_\acf$ be the theory of algebraically closed fields containing $k$,
we shall work in the category
denoted by 
$\Def_k(\dpl,T_\acf)$ in loc. cit.
Concretely, an object in $\Def_k(\dpl,T_\acf)$ is obtained  by evaluating an object
$\Def_k$ only at algebraically closed fields containing $k$:
an object $S$ of $\Def_k(\dpl,T_\acf)$ consists of the
datum of the subsets $S (K) = h_{\varphi} (K)$ for all $K$ algebraically closed containing $k$, for some formula $\varphi$ as above.
By quantifier elimination for $T_\acf$ in $\dplk$,  for every such $S$ there exists a formula $\varphi$ which is quantifier free.
This feature shows an important difference between
$\Def_k(\dpl,T_\acf)$ and $\Def_k$, since for objects in the later category, it is in general not possible to have quantifier free formulas in the residue field variables.
The present paper being geometric in nature we can work in
$\Def_k(\dpl,T_\acf)$, but for more arithmetical questions one would have to stick to $\Def_k$.
To shorten notation, we shall write $\Def_{\acf,k}$ instead  of $\Def_k(\dpl,T_\acf)$ and  $\GDef_{\acf,k}$ instead of $\GDef_k(\dpl,T_\acf)$.
Similarly, if $S$ is an object of $\Def_{\acf,k}$, we shall write $\cC_\acf (S)$
instead of $\cC (S, (\dpl,T_\acf))$, etc.

To  any algebraic variety $X$ over $k\llp t \rrp$ corresponds
the object $\underline{X}$ in $\GDef_{\acf,k}$
defined by 
$\underline{X} (K) = X(K\llp t \rrp)$ and $X \mapsto \underline{X}$ is a functor. 
Similarly, there is a functor
$X \mapsto \underline{X}_{{}}$ from
algebraic varieties over $k$ to
$\GDef_{\acf,k}$,
defined by  
$\underline{X}_{{}} (K) = X(K)$. Throughout the paper it should always be clear whether a given algebraic variety is defined over 
$k\llp t \rrp$ or $k$, to avoid any risk of confusion.
Finally,
if $X$ is defined over $k\llb t\rrb$, we will  write $\underline{X}_{\circ}$ 
for the assignment $\underline{X}_{\circ}(K) = X(K\llb t\rrb)$.

\subsubsection{Evaluation of functions}
Let  $S$ be in $\GDef_{\acf,k}$ and let $K$ be an algebraically closed field containing
$k$. For $x$ in $S (K)$, we denote by $k(x)$ the field  generated by $k$, by the coefficients of the
valued field components of $x$ and by the residue field components of $x$ (in the global case one reduces to the affine case).
Note that for any algebraically closed field $K'$ containing $k (x)$, $x \in S (K')$ by quantifier elimination, and that the field
$k (x)$ is independent from the choice of $K$.
We fix a Grothendieck universe $\mathcal{U}$ containing $k$ and we 
define the set of points
$\vert S \vert$ as the colimit of the sets $S (K)$ where $K$ belongs to the category of fields extensions of $k$ in
 $\mathcal{U}$.
[Note that this definition is slightly different from the one given in \cite{CL-2008}.]

Let $\ast_{k}$ be the terminal object in $\GDef_{\acf,k}$.
Its set of points $\vert \ast_{k} \vert$ consists of a unique point $\ast_{k}$ with
$k (\ast_{k}) = k$.
By construction
$$
\cC_\acf (\ast_{k}) = \cM_k \otimes_{\mathbb{Z} [\mathbb{L}, \mathbb{L}^{-1}]}    \mathbb{A} = K_0 (\Var_k)  \otimes_{\mathbb{Z} [\mathbb{L}]} \mathbb{A}
$$
with $\mathbb{A} = \mathbb{Z} \Bigl[\mathbb{L}, \mathbb{L}^{-1}, \Bigl(\frac{1}{1- \mathbb{L}^{-i}}\Bigr)_{i >0}\Bigr]$.

For a general $S$ in $\GDef_{\acf,k}$, one can evaluate a function   $\varphi \in \cC_\acf (S)$ at $x \in \vert S \vert$ as follows.
We have an inclusion morphism
$i_x : x \hookrightarrow S \otimes k(x)$ in $\GDef_{\acf,k (x)}$, where $S \otimes k (x)$ is obtained from $S$ by extension of scalars
and we identify $x$ with the terminal object in $\GDef_{\acf,k (x)}$.
The function $\varphi$ gives rise by base change to a function in $\cC_\acf (S\otimes k(x))$
and one defines 
$\varphi (x)$ as its pullback  under the inclusion morphism $i_x$.
It is an element of
$ \cM_{k(x)} \otimes_{\mathbb{Z} [\mathbb{L}, \mathbb{L}^{-1}]}  \mathbb{A}$.
It is proved in  \cite{eval} that 
$\varphi = 0$ if and only $\varphi (x) = 0$ for every point $x$ in  $\vert S \vert$.

 \subsubsection{A Fubini Theorem}\label{ssf}

Let $\varphi$ be in $\cC_\acf (S)$ with $S$ in $\Def_{\acf,k}$ of $K$-dimension $d$ in the terminology of \cite{CL-2008}.
We denote by $\vert \omega_0\vert_S$ the canonical volume form in the sense of \cite[15.1]{CL-2008}.
As in  \cite[6.2]{CL-2008}  denote by  $C_\acf^d (S)$ the quotient of $\cC_\acf(S)$ by the ideal of functions with support contained in a definable subset of $K$-dimension at most $d-1$.
When the class $[\varphi]$ of $\varphi$ in $C^d_\acf (S)$ is integrable, the integral $\int_S [\varphi] \, \vert \omega_0\vert_S$
was defined in \cite[15.1]{CL-2008}. It belongs to $\cC_\acf (\ast_{k})$.
We shall say $\varphi$ is integrable if its class $[\varphi]$ in $C_\acf^d (S)$ is and we shall  set 
\[\int_S \varphi \, \vert \omega_0\vert_S := \int_S [\varphi] \, \vert \omega_0\vert_S.\]
Note that if $\varphi$ is zero outside
a definable subset of $K$-dimension at most $d-1$, then $\int_S \varphi \, \vert \omega_0\vert_S = 0$.

Similarly, let $X$ be a 
smooth algebraic variety over $k\llp t \rrp$ of pure dimension $d$.  Let $\omega_X$ be a degree $d$ algebraic differential form on $X$, and
$\varphi \in \cC_\acf (\underline{X})$.
We shall say $\varphi \, \vert \omega_X \vert$ is integrable if
$[\varphi] \, \vert \omega_X \vert$ is integrable and we set
$ \int_{\underline{X}} \varphi \, \vert \omega_X \vert := \int_{\underline{X}} [\varphi] \, \vert \omega_X \vert$.

We shall use the following statement, which follows directly from Theorem 10.1.1, Proposition 15.4.1 and Section 16 of \cite{CL-2008}.

\begin{proposition}\label{fubini}\begin{enumerate}
\item Let $X$ and $Y$ be 
smooth algebraic varieties over $k\llp t \rrp$ of pure dimension. 
Let $f : X \to Y$ be a smooth morphism. 
We denote by  $\underline{f} : \underline{X} \to \underline{Y}$ the morphism induced by $f$.
Let $\omega_X$ and $\omega_Y$ be top degree algebraic differential forms on $X$ and $Y$, respectively, everywhere non-zero.
Let $\varphi \in \cC_\acf (\underline{X})$ such that $\varphi \, \vert \omega_X \vert$ is integrable.
For any point
$y$ in $\underline{Y}$, denote by 
$\underline{X}_y$ the fiber of $\underline{f}$ at $y$.
Then there exists a definable subset $Z$ of $\underline{Y}$ of $K$-dimension at most $\dim Y - 1$
and
a constructible motivic function
$\psi \in \cC_\acf (\underline{Y})$ with $\psi \, \vert \omega_Y \vert$ integrable
such that, for any point $y$ in $\underline{Y} \setminus Z$,
$\varphi_{\vert \underline{X}_y} \, \vert \omega_X / f^* (\omega_Y) \vert_{\vert X_y}$ is integrable on
$\underline{X}_y$,
$$
\psi (y) = \int_{\underline{X}_y} \varphi_{\vert \underline{X}_y} \, \vert \omega_X / f^* (\omega_Y) \vert_{\vert \underline{X}_y},
$$
and
$$
\int_{\underline{X}} \varphi \, \vert \omega_X \vert =
\int_{\underline{Y}} \psi \, \vert \omega_Y \vert.
$$
\item Let $X$  be a
smooth algebraic variety over $k\llp t \rrp$ of pure dimension. Let $Y$ be an algebraic variety over $k$.
Let  $\underline{f} : \underline{X} \to \underline{Y}_{{}}$ be a morphism in $\GDef_{\acf,k}$.
Let $\omega_X$ be a top degree form on $X$ which is everywhere non-zero.
Let $\varphi \in \cC_\acf (\underline{X})$ such that $\varphi \, \vert \omega_X \vert$ is integrable.
Then, for any point
$y$ in $\underline{Y}_{{}}$,  $\varphi_{\vert \underline{X}_y} \, \vert \omega_X  \vert_{\vert X_y}$ is integrable on
$\underline{X}_y$
and there exists a constructible motivic function
$\psi \in \cC_\acf (\underline{Y}_{{}})$ such that,
for every point $y$  of $\underline{Y}_{{}}$,
$$
\psi (y) = \int_{\underline{X}_y} \varphi_{\vert \underline{X}_y} \, \vert \omega_X  \vert_{\vert \underline{X}_y},
$$
and
$$
\int_{\underline{X}} \varphi \, \vert \omega_X \vert =
\int_{\underline{Y}_{{}}} \psi.
$$
\end{enumerate}
\end{proposition}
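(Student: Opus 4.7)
The plan is to deduce both parts from the Fubini theorem for constructible motivic functions of \cite{CL-2008}, already developed in the $T_\acf$ variant in Section 16 of loc.\ cit. For part (1), the smooth morphism $f: X \to Y$ induces $\underline{f}: \underline{X} \to \underline{Y}$ in $\GDef_{\acf,k}$ of constant relative $K$-dimension $\dim X - \dim Y$. Since $\omega_X$ and $\omega_Y$ are everywhere non-vanishing top forms, their quotient $\omega_X/f^*(\omega_Y)$ is a relative top form along the fibers of $f$, and the canonical volume form it induces on each fiber is precisely the relative measure appearing in Proposition 15.4.1 of \cite{CL-2008}. Applying Theorem 10.1.1 of loc.\ cit.\ to the pushforward $\underline{f}_!$ produces a class $[\psi] \in C_\acf^{\dim Y}(\underline{Y})$ satisfying the desired integral identity; lifting to a representative $\psi \in \cC_\acf(\underline{Y})$ gives the formula. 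The exceptional subset $Z$ of $K$-dimension at most $\dim Y - 1$ appears because $\underline{f}_!$ is constructed at the level of $C_\acf^{\dim Y}$, so the pointwise equality $\psi(y) = \int_{\underline{X}_y}\cdots$ may fail on a definable subset of smaller dimension, corresponding to the ambiguity modulo functions supported in codimension at least one.

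For part (2), the target $\underline{Y}_{{}}$ lies entirely in the residue-field sort and carries no valued-field measure; integration over it is merely pushforward to the motive of $Y$ itself. Consequently no bad locus needs to be removed: every point $y \in \underline{Y}_{{}}$ is a residue-field point above which the fiber $\underline{X}_y$ is already a definable subassignment in the valued-field sort, so the restriction $\varphi_{\vert \underline{X}_y} \vert \omega_X \vert_{\vert \underline{X}_y}$ is unambiguously defined and the pointwise identification of $\psi(y)$ with the fiber integral is valid everywhere. The remaining identity then reduces to the relative integration formula of \cite[\S 16]{CL-2008} in the situation where the base lives in the residue-field sort, in which case the pushforward $\underline{f}_!$ is a genuine map into $\cC_\acf(\underline{Y}_{{}})$ rather than only into a quotient.

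The only genuine obstacle is the dictionary between the measure-theoretic language of \cite{CL-2008}, which is formulated in terms of relative top forms on definable subassignments and the quotient rings $C^d_\acf$, and the more geometric language of algebraic differential forms and their canonical volume forms used in the present paper. This translation is exactly what Proposition 15.4.1 of \cite{CL-2008} provides; once it is applied to the forms $\omega_X$, $\omega_Y$ and $\omega_X/f^*(\omega_Y)$, both parts of the proposition follow at once from the cited theorems of loc.\ cit.
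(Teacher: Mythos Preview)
Your proposal is correct and follows the same approach as the paper, which simply notes that the proposition ``follows directly from Theorem 10.1.1, Proposition 15.4.1 and Section 16 of \cite{CL-2008}.'' Your write-up provides more detail on how these ingredients fit together, but the underlying argument is identical.
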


\subsection{Motivic functions with values in motives}
For  $S$ in $\GDef_{\acf,k}$, we have a natural morphism
$$
\vartheta_S:  \cC_\acf (S) \longrightarrow \prod_{x \in \vert S \vert} K_0( \DAC (k(x), \Lambda)) \otimes_{\mathbb{Z} [\mathbb{L}, \mathbb{L}^{-1}]}  \mathbb{A}
$$
sending
a motivic function $\varphi$ to $(\chi_{k(x), c} (\varphi (x)))$.
We set  $\cC_\mot (S) := \vartheta_S (\cC_\acf (S))$ and still denote by
$\vartheta_S$ the induced morphism
$\vartheta_S: \cC_\acf (S) \to \cC_\mot (S)$.

Let now $X$ be a quasi-projective  variety over $k$.
In this case, $\cC_\acf (\underline{X}_{{}})$ can be canonically identified with
$\cM_X \otimes_{\mathbb{Z} [\mathbb{L}, \mathbb{L}^{-1}]}    \mathbb{A}$.
Consider the morphism
$$
\chi_{X, c} \otimes \mathbb{A} : \cM_X \otimes_{\mathbb{Z} [\mathbb{L}, \mathbb{L}^{-1}]}    \mathbb{A}
\longrightarrow K_0( \DAC (X, \Lambda)) \otimes_{\mathbb{Z} [\mathbb{L}, \mathbb{L}^{-1}]}    \mathbb{A}.
$$
It follows from Lemma \ref{evmot}
 that $\cC_\mot (\underline{X}_{{}})$ can be canonically identified with
the image of $\chi_{X, c} \otimes \mathbb{A}$ and that under that identification
the morphisms $\vartheta_{\underline{X}_{{}}}:\cC_\acf (\underline{X}_{{}}) \to \cC_\mot (\underline{X}_{{}})$ and $\chi_{X, c} \otimes \mathbb{A}: \cC_\acf (\underline{X}_{{}}) \to \cC_\mot (\underline{X}_{{}})$ are equal.

Consider a morphism $q: X \to Y$ between quasi-projective  varieties over $k$. Tensoring with $\mathbb{A}$ the morphisms $q_!$ and $q^*$
one gets morphisms
$q_! : \cC_\acf (\underline{X}_{{}}) \to \cC_\acf (\underline{Y}_{{}})$
and
$q^* : \cC_\acf (\underline{Y}_{{}}) \to \cC_\acf (\underline{X}_{{}})$.
One deduces
from the commutative diagrams
(\ref{eq1}) and (\ref{eq2}) that the morphisms 
$q_!: \cM_X \otimes_{\mathbb{Z} [\mathbb{L}, \mathbb{L}^{-1}]}    \mathbb{A}
\to 
\cM_Y \otimes_{\mathbb{Z} [\mathbb{L}, \mathbb{L}^{-1}]}    \mathbb{A}$
and
$q^*: \cM_Y \otimes_{\mathbb{Z} [\mathbb{L}, \mathbb{L}^{-1}]}    \mathbb{A}
\to 
\cM_X \otimes_{\mathbb{Z} [\mathbb{L}, \mathbb{L}^{-1}]}    \mathbb{A}$
obtained by tensoring with $\mathbb{A}$
induce morphisms
$q_! : \cC_\mot (\underline{X}_{{}}) \to \cC_\mot (\underline{Y}_{{}})$
and
$q^* : \cC_\mot (\underline{Y}_{{}}) \to \cC_\mot (\underline{X}_{{}})$
and that
the diagrams 
 \begin{equation}\label{eq3}\xymatrix{  \cC_\acf (\underline{X}_{{}}) \ar[d]^{q_!} \ar[r]^-{\vartheta_{\underline{X}_{{}}}} &  \cC_\mot (\underline{X}_{{}})  \ar[d]^{q_!}  \\
							 \cC_\acf (\underline{Y}_{{}}) \ar[r]^-{\vartheta_{\underline{Y}_{{}}}} & \cC_\mot (\underline{Y}_{{}}) 
}\end{equation}
and
\begin{equation}\label{eq4} \xymatrix{  \cC_\acf (\underline{Y}_{{}}) \ar[d]^{q^*} \ar[r]^-{\vartheta_{\underline{Y}_{{}}}} &  \cC_\mot (\underline{Y}_{{}})  \ar[d]^{q^*}  \\
							 \cC_\acf (\underline{X}_{{}}) \ar[r]^-{\vartheta_{\underline{X}_{{}}}} & \cC_\mot (\underline{X}_{{}})
}\end{equation}
commute.

A crucial property of $\vartheta_S$ is that it commutes with integration:

\begin{propdef}\label{indrel}Let $S$ be in $\Def_{\acf,k}$.
Let $\varphi$ and $\varphi'$ be in $\cC_\acf (S)$. 
Assume $\varphi$ and $\varphi'$ are integrable
and that $\vartheta_S (\varphi) = \vartheta_S (\varphi')$.
Then
$$
\vartheta_{\ast_{k}} \Bigl(\int_S \varphi \, \vert \omega_0\vert_S\Bigr) =
\vartheta_{\ast_{k}} \Bigl(\int_S \varphi' \, \vert \omega_0\vert_S\Bigr)
$$
in $\cC_\mot ({\ast_{k}})$ (and thus also in $K_0 (\mathrm{M_{rat}} (k, \Lambda)) \otimes_{\mathbb{Z} [\mathbb{L}, \mathbb{L}^{-1}]}    \mathbb{A}$).
In this case we say  $\vartheta_S (\varphi)$ is integrable and we set
$$
\int_S \vartheta_S (\varphi) \, \vert \omega_0\vert_S := 
\vartheta_{\ast_{k}} \Bigl(\int_S \varphi \, \vert \omega_0\vert_S\Bigr).
$$
\end{propdef}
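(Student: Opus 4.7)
The statement comprises a well-definedness claim together with the resulting definition, the latter being automatic once well-definedness is established. My plan is to reduce the assertion, by $\mathbb{A}$-linearity of $\vartheta_S$ and of the integration map $\varphi \mapsto \int_S \varphi \, \vert \omega_0 \vert_S$, to the single implication: if $\varphi \in \cC_\acf(S)$ is integrable and $\vartheta_S(\varphi) = 0$, then $\vartheta_{\ast_k}\bigl(\int_S \varphi \, \vert \omega_0 \vert_S\bigr) = 0$. Unpacked, the hypothesis is that $\chi_{k(x), c}(\varphi(x)) = 0$ in $K_0(\DAC(k(x), \Lambda)) \otimes \mathbb{A}$ for every point $x \in \vert S \vert$, and what needs to be shown is the vanishing of $\chi_{k, c}$ applied to the whole integral.

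I would then iteratively decompose the motivic integration of \cite{CL-2008} into elementary operations, each visibly compatible with $\vartheta$. Combining the Fubini principle (Proposition \ref{fubini}) with the cell-decomposition framework of \cite{CL-2008}, the integral $\int_S \varphi \, \vert \omega_0 \vert_S$ can be written as a finite sum of terms obtained by integrating one variable at a time in three flavors: (a) for a residue field variable, the partial integral is a $!$-pushforward along a morphism of $k$-varieties; (b) for a value group variable, the partial integral is a formal geometric sum with coefficients in $\mathbb{A}$; (c) for a valued field variable, cell decomposition reduces the partial integral to expressions built from operations of types (a) and (b), together with Jacobian corrections in powers of $\mathbb{L}$ coming from the residue field data of each cell.

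Each of these operations is compatible with $\vartheta$: for (a), this is the commutative diagram \eqref{eq3}; for (b), the geometric sums live in $\mathbb{A}$, a common subring of both $\cC_\acf(\ast_k)$ and $\cC_\mot(\ast_k)$; for (c), the assertion follows by combining (a) and (b) through the cell-decomposition reduction. Proceeding by induction on the number of integrations, I expect that if the $\vartheta$-image of an integrand vanishes pointwise on an intermediate base, then the $\vartheta$-image of the partial integral again vanishes pointwise, so the final integral lies in the kernel of $\vartheta_{\ast_k}$. The main obstacle will be step (c): one needs to carefully reconcile the cell-decomposition-based formula for integration along a valued field variable in \cite{CL-2008} with the pointwise evaluation framework governing $\vartheta$, and the key tools are expected to be the commutative diagrams \eqref{eq1}--\eqref{eq4} together with a fiberwise vanishing principle in the spirit of Lemma \ref{evmot}.
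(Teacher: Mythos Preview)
Your proposal is correct and follows essentially the same approach as the paper's own proof: both arguments reduce the claim to checking that each of the three elementary integration steps from \cite{CL-2008} (valued field cells, residue field variables, value group variables) commutes with $\vartheta$, invoking the commutative diagrams \eqref{eq3} and \eqref{eq4} for the residue-field pushforward and pullback respectively, and treating the value-group step as obvious. One small sharpening: for your step (c), the paper is explicit that integration over a valued-field cell is built from \emph{pullbacks} along residue-field and value-group variables (so diagram \eqref{eq4} rather than \eqref{eq3} is the operative one there), which is slightly cleaner than phrasing it as ``combining (a) and (b)''.
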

\begin{proof}
The construction of the motivic integral in \cite{CL-2008} is performed in several steps. 
It uses cell decomposition to reduce to one of the following three cases: (i) integration over cells in one variable in the valued field sort in the sense of \cite[7.1]{CL-2008}, 
(ii) integration over residue field variables and (iii) integration over  value group variables.
Integration over cells in one variable in the valued field sort uses pullbacks over residue field variables and   value group variables.
Once one notices that pullback over residue field variables commutes with $\vartheta$ by 
the commutativity of diagram
(\ref{eq4}) and that pullback over value group variables obviously commutes with $\vartheta$, it is clear that (i) 
commutes with $\vartheta$.
Commutation of (ii) with $\vartheta$
follows directly from the commutativity of diagram
(\ref{eq3}) while commutation of  (iii) with $\vartheta$ is clear.
\end{proof}

Let $X$  be a
smooth algebraic variety over $k\llp t \rrp$ of pure dimension and let $\omega_X$ be a top degree form on $X$.
Let $\varphi \in \cC_\acf (\underline{X})$ such that $\varphi \, \vert \omega_X \vert$ is integrable.
Using affine charts, one deduces from Proposition \ref{indrel}, that $\vartheta_{\ast_{k}} \Bigl(\int_{\underline{X}} \varphi \, \vert \omega_X\vert_S\Bigr)$
depends only on $\vartheta_{\underline{X}}  (\varphi)$. In this case we say  $\vartheta_{\underline{X}}  (\varphi) \, \vert \omega_X \vert$ is integrable and we set
$$
\int_{\underline{X}}^{\mathrm{mot}} \varphi \, \vert \omega_X \vert
:= \vartheta_{\ast_{k}} \Bigl(\int_{\underline{X}} \varphi \, \vert \omega_X\vert_S\Bigr).
$$

Similarly, if $Y$ is an algebraic variety over $k$ and
$\varphi \in \cC_\acf (\underline{Y}_{{}})$, one sets
$$
\int_{\underline{Y}_{{}}}^{\mathrm{mot}} \varphi 
:= \vartheta_{\ast_{k}} \Bigl(\int_{\underline{Y}_{{}}} \varphi \Bigr).
$$

In particular, we have the following consequence of Proposition \ref{fubini}:

\begin{proposition}\label{motfubini}
\begin{enumerate}
\item
Let $X$ and $Y$ be 
smooth algebraic varieties over $k\llp t \rrp$ of pure dimension. 
Let $f : X \to Y$ be a smooth morphism. 
Let $\omega_X$ and $\omega_Y$ be top degree forms on $X$ and $Y$, respectively, everywhere non-zero.
Let $\varphi \in \cC_\acf (\underline{X})$ such that $\varphi \, \vert \omega_X \vert$ is integrable.
Then there exists a definable subset $Z$ of $\underline{Y}$ of $K$-dimension at most $\dim Y - 1$
and
$\psi \in \cC_\acf (\underline{Y})$ with $\psi \, \vert \omega_Y \vert$ integrable
such that, for any point $y$ in $\underline{Y} \setminus Z$,
$\vartheta_{\underline{X}_y} (\varphi_{\vert \underline{X}_y}) \, \vert \omega_X / f^* (\omega_Y) \vert_{\vert \underline{X}_y}$ is integrable,
$$
\vartheta_{\underline{Y}} (\psi) (y) = \int_{\underline{X}_y}^{\mathrm{mot}} \varphi_{\vert \underline{X}_y} \, \vert \omega_X / f^* (\omega_Y) \vert_{\vert \underline{X}_y},
$$
and
$$
\int_{\underline{X}}^{\mathrm{mot}} \varphi \, \vert \omega_X \vert =
\int_{\underline{Y}}^{\mathrm{mot}} \psi \, \vert \omega_Y \vert.
$$
\item
Let $X$  be a
smooth algebraic variety over $k\llp t \rrp$ of pure dimension. Let $Y$ be an algebraic variety over $k$.
Let  $\underline{f} : \underline{X} \to \underline{Y}_{{}}$ be a morphism in $\GDef_{\acf,k}$.
Let $\omega_X$ be a top degree form on $X$ which is everywhere non-zero.
Let $\varphi \in \cC_\acf (\underline{X})$ such that $\varphi \, \vert \omega_X \vert$ is integrable.
Then, for any point
$y$ in $\underline{Y}_{{}}$,
$\vartheta_{\underline{X}_y} (\varphi_{\vert \underline{X}_y}) \, \vert \omega_X  \vert_{\vert \underline{X}_y}$ is integrable 
and there exists $\psi \in \cC_\acf (\underline{Y}_{{}})$  such that,
for every point $y$  of $\underline{Y}_{{}}$,
$$
\vartheta_{\underline{Y}_{{}}} (\psi) (y) = \int_{\underline{X}_y}^{\mathrm{mot}} \varphi_{\vert \underline{X}_y} \, \vert \omega_X  \vert_{\vert \underline{X}_y},
$$
and
$$
\int_{\underline{X}}^{\mathrm{mot}} \varphi \, \vert \omega_X \vert =
\int_{\underline{Y}_{{}}}^{\mathrm{mot}} \psi.
$$
\end{enumerate}
\end{proposition}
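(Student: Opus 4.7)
The plan is to deduce Proposition \ref{motfubini} directly from its classical counterpart Proposition \ref{fubini} by pushing both sides of the conclusions through the morphism $\vartheta$, invoking Proposition-Definition \ref{indrel} together with its extension to smooth varieties over $k\llp t \rrp$ and to varieties over $k$ recorded in the paragraphs just above the statement. These ensure that the motivic integrals $\int^{\mathrm{mot}}$ depend only on the $\vartheta$-image of the integrand and that $\vartheta$ intertwines $\int$ with $\int^{\mathrm{mot}}$.

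For part (1), the first step is to apply Proposition \ref{fubini}(1) to the integrable function $\varphi \, |\omega_X|$, obtaining a definable subset $Z \subseteq \underline{Y}$ of $K$-dimension at most $\dim Y - 1$ and a function $\psi \in \cC_\acf(\underline{Y})$ with $\psi \, |\omega_Y|$ integrable and satisfying the stated pointwise and global identities in $\cC_\acf$. Next, for each $y \in \underline{Y} \setminus Z$, I would apply $\vartheta_{\ast_{k(y)}}$ to the pointwise identity: by the pointwise construction $\vartheta_S(\varphi)(x) = \chi_{k(x),c}(\varphi(x))$, the left-hand side becomes $\vartheta_{\underline{Y}}(\psi)(y)$, while by the very definition of $\int^{\mathrm{mot}}_{\underline{X}_y}$ the right-hand side becomes the desired motivic fiber integral. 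Applying $\vartheta_{\ast_k}$ to the global identity yields $\int^{\mathrm{mot}}_{\underline{X}} \varphi \, |\omega_X| = \int^{\mathrm{mot}}_{\underline{Y}} \psi \, |\omega_Y|$.

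Part (2) would proceed by exactly the same argument starting from Proposition \ref{fubini}(2); the fact that $\underline{Y}_{{}}$ now comes from an algebraic variety over $k$ rather than from $k\llp t \rrp$ changes nothing essential, because the definition $\int_{\underline{Y}_{{}}}^{\mathrm{mot}} \psi := \vartheta_{\ast_k}(\int_{\underline{Y}_{{}}} \psi)$ and the pointwise compatibility of $\vartheta$ with evaluation at points of $\underline{Y}_{{}}$ are set up precisely to handle this case.

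The main point to verify is that every manipulation used to produce the classical Fubini conclusion actually commutes with $\vartheta$. This has already been packaged into Proposition-Definition \ref{indrel}, whose proof in turn reduces to the commutativity of the squares \eqref{eq3} and \eqref{eq4} and to the trivial observation that pullback over value group variables commutes with $\vartheta$. Once these compatibilities are admitted, the present proposition is a formal consequence of Proposition \ref{fubini}, with no new integration-theoretic input required.
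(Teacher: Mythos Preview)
Your proposal is correct and matches the paper's approach: the paper states Proposition \ref{motfubini} simply as ``the following consequence of Proposition \ref{fubini}'' with no further argument, and your write-up spells out exactly the intended deduction, namely applying $\vartheta$ to the conclusions of Proposition \ref{fubini} via Proposition-Definition \ref{indrel} and the definitions of $\int^{\mathrm{mot}}$ recorded just before the statement.
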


\subsection{The equivariant case}\label{equicase}

We fix a finite commutative group $\Gamma$. 
Let $S$ be an object of $\GDef_{\acf,k}$. 
Recall that the ring
$\cC_\acf (S)$ of constructible motivic functions is defined in Section 5.3 of 
\cite{CL-2008}
as a tensor product
\[\cC_\acf (S) := K_0 (\RDef_{\acf,S}) \otimes_{\mathcal{P}^0 (S)} \mathcal{P} (S).\]
Here the ring $\mathcal{P} (S)$ is the ring of Presburger functions defined in Section 4.6 of \cite{CL-2008},
$\mathcal{P}^0 (S)$ is a subring of $\mathcal{P} (S)$ defined in Section 5.2 of \cite{CL-2008}, and
$K_0 (\RDef_{\acf,S})$ is the Grothendieck ring of 
the full subcategory
$\RDef_{\acf,S}$   of $\GDef_{\acf,S}$
whose objects are definable subobjects $Z$ of 
$S \times \mathbb{A}_{k}^m$, for some $m$, where we denote by abuse of notation by $\mathbb{A}_{k}^m$
the object whose $K$-points are $K^m$, for every field extension $K$ of $k$.
For such a $Z$
denote by $p_Z : Z \to S$ the projection to $S$.

We define
$\RDef_{\acf,S}^{\Gamma}$ as the category whose objects
are objects $Z$ of $\RDef_{\acf,S}$ endowed with a definable $\Gamma$-action $\Gamma \times Z \to Z$ 
leaving the fibers of $p_Z$ stable and satisfying the following condition:
for any point $x$ of $S$, there exists a finite partition of the fiber $Z_x$ into Zariski locally closed subsets
$W_i$ which are stable under the $\Gamma$-action and the 
$\Gamma$-action $\Gamma \times W_i \to W_i$ is algebraic (viewing $\Gamma$ as a finite group scheme).
Morphisms in $\RDef_{\acf,S}^{\Gamma}$ are $\Gamma$-equivariant morphisms in $\RDef_{\acf,S}$.
We have a morphism
$\mathcal{P}^0 (S) \to K_0 (\RDef_{\acf,S}^{\Gamma})$
obtained
by composing
the morphism
$\mathcal{P}^0 (S) \to K_0 (\RDef_{\acf,S})$
with the morphism
$K_0 (\RDef_{\acf,S}) \to K_0 (\RDef_{\acf,S}^{\Gamma})$ 
which is induced by the functor sending a object in $\RDef_{\acf,S}$ to the same object endowed with the trivial $\Gamma$-action.
We define the ring $\cC_\acf^{\Gamma} (S)$ as the tensor product
$\cC_\acf^{\Gamma} (S) := K_0 (\RDef_{\acf,S}^{\Gamma}) \otimes_{\mathcal{P}^0 (S)} \mathcal{P} (S)$.

The integration theory developed in \cite{CL-2008} for $\cC_\acf (S)$ can be reproduced verbatim to $\cC_\acf^{\Gamma} (S)$.
In particular we have the following equivariant version of Proposition \ref{motfubini}:

\begin{proposition}\label{gfubini}The statement of Proposition \ref{motfubini} holds
with $\cC_\acf$ replaced by $\cC_\acf^{\Gamma}$.
\end{proposition}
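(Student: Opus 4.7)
The plan is to replicate the proof of Proposition \ref{motfubini}, with the $\Gamma$-action tracked throughout. Recall that the argument for \ref{motfubini} proceeds in two stages: first the non-equivariant Fubini statement (Proposition \ref{fubini}) from \cite{CL-2008}, then the commutation of $\vartheta$ with integration (Proposition-Definition \ref{indrel}). Accordingly, I would first establish an equivariant version of \ref{fubini} by auditing the Cluckers-Loeser construction step by step, and then deduce the motivic statement by the same formal argument as for \ref{motfubini}.

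For the equivariant Fubini, the key observation is that a class in $\cC_\acf^\Gamma(S)$ is represented by some $Z \hookrightarrow S \times \mathbb{A}_k^m$ with $\Gamma$ acting by permuting the fibers of the projection $p_Z$ to $S$. Cluckers-Loeser integration reduces, via cell decomposition and iterated integration, to three elementary operations: (i) integration over one valued-field variable within a cell, which is expressed purely through pullbacks to residue-field and value-group parameters; (ii) summation over residue-field variables, which is pushforward along a projection $Z' \to Z$ that is $\Gamma$-equivariant (with $\Gamma$ preserving the relevant fibers), and hence descends tautologically to $K_0(\RDef_{\acf,-}^\Gamma)$; and (iii) summation over value-group variables via Presburger functions, a sort on which $\Gamma$ never acts. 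In each of these elementary steps the $\Gamma$-equivariance is preserved in an obvious way, and the arguments of \cite{CL-2008}, including cell decomposition and the change-of-variables formula, carry over verbatim with the $\Gamma$-action merely along for the ride.

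To upgrade to motivic coefficients, I would verify that the commutative squares (\ref{eq3}) and (\ref{eq4}) admit $\Gamma$-equivariant refinements; this uses that Ayoub's six-operations formalism on $\DAC(-,\Lambda)$ is compatible with the action of the constant finite group scheme $\Gamma_k$ (equivalently, one may pass to the Grothendieck ring of $\Gamma$-equivariant constructible \'etale motives, or record the $\Gamma$-action as an extra structure on objects of $\DAC$). The inductive argument of Proposition-Definition \ref{indrel}, which proceeds by cases over the same three elementary integration procedures, then goes through unchanged, and the conclusions of Proposition \ref{motfubini} follow. The main obstacle is essentially notational: since the $\Gamma$-action is transverse to every integration variable in the Cluckers-Loeser setup, no new geometric input is required, but one has to carefully record the $\Gamma$-equivariance at each stage of cell decomposition and change-of-variables, which is the source of whatever tedium remains.
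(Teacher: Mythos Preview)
Your proposal is correct and takes essentially the same approach as the paper, which offers no formal proof but simply asserts in the preceding sentence that the Cluckers--Loeser integration theory ``can be reproduced verbatim'' for $\cC_\acf^{\Gamma}(S)$. Your elaboration---breaking the construction into the three elementary operations (valued-field cells via pullback, residue-field pushforward, value-group summation) and observing that the $\Gamma$-action is carried along trivially at each step---is exactly the content behind ``verbatim'', and mirrors the case analysis the paper itself uses in Proposition-Definition~\ref{indrel}.
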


We have a natural functor
$q : \RDef_{\acf,S}^{\Gamma} \to \RDef_{\acf,S}$
which sends an object $X$ in $\RDef_{\acf,S}^{\Gamma}$ to the quotient $X / \Gamma$.
It induces a map $q: K_0 (\RDef_{\acf,S}^{\Gamma}) \to K_0 (\RDef_{\acf,S})$ which can be extended by linearity to a map
$q: \cC_\acf^{\Gamma} (S) \to \cC_\acf (S)$.

\begin{proposition}\label{qcomi}
Let $X$ be a smooth algebraic variety of pure dimension over $k \llp t \rrp$
and let $\omega$ be a top degree form.
Let $S \subset \underline{X}$ be in $\GDef_{\acf,k}$.
Let  $\psi \in \cC^\Gamma_\acf(S)$ be integrable. We have
\[ q \Big(\int_{S} \psi \vert \omega \vert\Big)=  \int_S q (\psi) \vert \omega \vert. \]
\end{proposition}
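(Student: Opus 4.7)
The plan is to follow the same template as the proof of Proposition-Definition \ref{indrel}, adapting it to the equivariant context. Since the integration theory of \cite{CL-2008} is built step by step out of a small number of basic operations via cell decomposition, it suffices to verify that $q$ commutes with each of these operations. More precisely, I will reduce to showing compatibility in the three basic cases appearing in loc. cit.: (i) integration over cells in one valued field variable, (ii) integration over residue field variables, and (iii) integration over value group variables.

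For case (iii), the $\Gamma$-action plays no role since the value group variables carry no $\Gamma$-structure, and the integration is defined by the same Presburger-theoretic formulas on both sides; hence commutation with $q$ is immediate. For case (ii), the integration is defined via the pushforward on Grothendieck rings of residual objects, and the required commutation is encoded in the construction of $q$ itself: if $Z \in \RDef^\Gamma_{\acf, S \times \mathbb{A}^n_k}$ is mapped to $S$, then by definition $q([Z]) = [Z/\Gamma]$, and $Z/\Gamma \to S$ is precisely the quotient of the original family; thus pushforward commutes with the quotient functor $\RDef^\Gamma_{\acf,-} \to \RDef_{\acf,-}$ at the level of Grothendieck rings. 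For case (i), the integration over a cell in one valued field variable is constructed using pullbacks (and summations) over residue field and value group variables together with the canonical parametrization of cells; since pullback of a $\Gamma$-equivariant family along a definable morphism is again $\Gamma$-equivariant, with quotient equal to the pullback of the quotient, $q$ commutes with these pullbacks. Combined with case (ii), this handles case (i).

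Having verified commutation in these building blocks, one transfers the result to the global setting using affine charts, exactly as in the deduction of Proposition \ref{motfubini} from Proposition \ref{fubini}: the datum $(X, \omega)$ with $X$ smooth of pure dimension over $k\llp t\rrp$ is reduced to local affine pieces on which the construction of the motivic integral of $\psi\,|\omega|$ over $S$ can be expressed through the three basic operations above. Integrability of $\psi$ ensures that all occurring expressions converge in $\cC_\acf^\Gamma(\ast_k)$ and their images under $q$ converge in $\cC_\acf(\ast_k)$.

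The main technical obstacle, as in \ref{indrel}, is checking case (ii) rigorously, i.e.\ that the quotient by $\Gamma$ is compatible with the pushforward morphism $K_0(\RDef^\Gamma_{\acf, S \times \mathbb{A}^m_k}) \to K_0(\RDef^\Gamma_{\acf, S})$ used to integrate out residue field variables. This is really a statement about the functoriality of the quotient in families, and relies on the fact that, by the local algebraicity assumption built into the definition of $\RDef^\Gamma_{\acf, S}$, the fiberwise quotient $Z_x/\Gamma$ exists as an object of $\RDef_{\acf, x}$ and assembles into a definable family over $S$. Once this point is settled, the remainder of the argument is a direct transcription of the proof of Proposition-Definition \ref{indrel}.
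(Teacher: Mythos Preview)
Your proposal is correct and follows exactly the approach the paper takes: the paper's proof is the single sentence ``Commutation of $q$ with integration is clear at each step in the construction of the motivic integral,'' and you have simply spelled out what those steps are (cases (i)--(iii) as in the proof of Proposition-Definition \ref{indrel}). There is nothing to add.
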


\begin{proof}Commutation of $q$ with integration is clear at each  step in the construction of the motivic integral.
\end{proof}

Now let $S$ be a quasi-projective variety over $k$.
We denote by $\Var_S^{\Gamma}$ the category of
morphisms
$p: X \to S$ with $X$ quasi-projective endowed with an algebraic  $\Gamma$-action such that, for every point $x$
of $X$, the $\Gamma$-orbit of $x$ is contained in an affine subset of $p^{-1} (p (x))$.
Morphisms are $S$-morphisms equivariant with respect to the $\Gamma$-action.
We denote by $K_0 (\Var_S^{\Gamma})$ the corresponding  Grothendieck ring of varieties over $S$
and by $\cM_S^{\Gamma}$ its localization by the  class $\mathbb{L}$ of the affine line over $S$ endowed with the trivial $\Gamma$-action.

If $q: S \to T$ is a morphism of quasi-projective $k$-schemes,
composition with $q$ and pullback induce respectively morphisms 
$$q_! : \cM_S^{\Gamma} \to \cM_T^{\Gamma}$$
and
$$q^* : \cM_T^{\Gamma} \to \cM_S^{\Gamma}.$$

We assume from now on that $\Lambda$ is the field $\mathbb{Q} (\mu_r)$ with $r$ the order of the group $\Gamma$
and we denote by $\widehat \Gamma$ the group of characters
$\Gamma \to \Lambda^{\times}$.

Let $S$ be a quasi-projective $k$-scheme endowed with the trivial $\Gamma$-action.
Consider  $M \in \DAC (S, \Lambda)$ endowed with a $\Gamma$-action, that is, a group
morphism $a : \Gamma \to \Aut_{\DAC (S, \Lambda)} (M)$.
Fix $\delta \in \widehat \Gamma$
and set
$p_{\delta} := \vert \Gamma \vert^{-1} \sum_{\gamma \in \Gamma} \delta^{-1} (\gamma) a (\gamma)$.
The morphism $p_{\delta}$ is a projector. Since $\Lambda$ is a $\mathbb{Q}$-vector space,
the category $\DAC (S, \Lambda)$ is pseudo-abelian, cf.  \cite[Proposition 9.2]{Ayoub_ENS}.
Thus we can consider the image of  $p_{\delta}$ as an object in 
$\DAC (S, \Lambda)$, which we denote by $M^{\delta}$ and call
the $\delta$-isotypical component of $M$.

Let $p : X \to S$ be a quasi-projective $S$-scheme endowed with an algebraic  $\Gamma$-action
such that, for every point $x$
of $X$, the $\Gamma$-orbit of $x$ is contained in an affine subset of $p^{-1} (p (x))$.
By functoriality the object $p_! (\mathds{1}_X)$ 
is endowed with a $\Gamma$-action.
Thus we can consider its $\delta$-isotypical component $p_! (\mathds{1}_X)^{\delta}$.

By the same argument as the one in  \cite[Lemma 2.1]{ivo_seb}, there is 
a unique group morphism 
$$
\chi_{S, c}^{\delta} :
\cM_S^{\Gamma} \longrightarrow K_0(\DAC (S, \Lambda))
 $$
such that for $p : X \to S$ as above, $\chi_{S, c}^{\delta} ([X]) := [p_! (\mathds{1}_X)^{\delta}]$.
Furthermore
the diagrams 
 \begin{equation}\label{eq5}\xymatrix{  \cM_S^{\Gamma} \ar[d]^{q_!} \ar[r]^-{\chi_{S, c}^{\delta}} &  K_0( \DAC (S, \Lambda))  \ar[d]^{q_!}  \\
							 \cM_T^{\Gamma} \ar[r]^-{\chi_{T, c}^{\delta}} & K_0( \DAC (T, \Lambda)) 
}\end{equation}
and
\begin{equation}\label{eq6} \xymatrix{  \cM_T^{\Gamma} \ar[d]^{q^*} \ar[r]^-{\chi_{T, c}^{\delta}} &  K_0( \DAC (T, \Lambda))  \ar[d]^{q^*}  \\
							 \cM_S^{\Gamma} \ar[r]^-{\chi_{S, c}^{\delta}} & K_0( \DAC (S, \Lambda)) 
}\end{equation}
are commutative.

For  $S$ in $\GDef_{\acf,k}$, we have a natural morphism
$$
\vartheta^{\delta}_S:  \cC_\acf^{\Gamma} (S) \longrightarrow \prod_{x \in \vert S \vert} K_0( \DAC (k(x), \Lambda)) \otimes_{\mathbb{Z} [\mathbb{L}, \mathbb{L}^{-1}]}  \mathbb{A}
$$
sending
 $\varphi$ to $(\chi_{k(x), c}^{\delta} (\varphi (x)))$.
 We denote by
 $\cC_\mot^{\Gamma} (S)$
 the subring of
$ K_0( \DAC (k(x), \Lambda)) \otimes_{\mathbb{Z} [\mathbb{L}, \mathbb{L}^{-1}]}  \mathbb{A}$
generated by the images
$ \vartheta^{\delta}_S (\cC_\acf (S))$, for $\delta$ running over $\widehat \Gamma$.
Note that $\vartheta_S (\varphi) = \sum_{\delta \in \widehat \Gamma} \vartheta_S^{\delta} (\varphi)$
in  $\cC_\mot^{\Gamma} (S)$.

If $X$ is a quasi-projective  variety over $k$, then $\cC_\acf^{\Gamma} (\underline{X}_{{}})$ can be canonically identified with
$\cM_X^{\Gamma} \otimes_{\mathbb{Z} [\mathbb{L}, \mathbb{L}^{-1}]}    \mathbb{A}$.
For any character $\delta$, we have a morphism
$$
\chi_{X, c}^{\delta} \otimes \mathbb{A} : \cM_X^{\Gamma} \otimes_{\mathbb{Z} [\mathbb{L}, \mathbb{L}^{-1}]}    \mathbb{A}
\longrightarrow K_0( \DAC (X, \Lambda)) \otimes_{\mathbb{Z} [\mathbb{L}, \mathbb{L}^{-1}]}    \mathbb{A}.
$$
Again, it follows from Lemma \ref{evmot}
 that $\cC_\mot^{\Gamma} (\underline{X}_{{}})$ can be canonically identified with
the subring generated by the images of $\chi_{X, c}^{\delta} \otimes \mathbb{A}$ and that under that identification
$\vartheta^{\delta}_{\underline{X}_{{}}}$ and $\chi_{X, c}^{\delta} \otimes \mathbb{A}$
define the same morphisms
$\cC_\acf^{\Gamma} (\underline{X}_{{}}) \to \cC_\mot^{\Gamma} (\underline{X}_{{}})$.

Similarly as in the non-equivariant case, given a  morphism $q: X \to Y$ between quasi-projective  varieties over $k$,
one constructs morphisms
$q_! : \cC_\mot^{\Gamma} (\underline{X}_{{}}) \to \cC_\mot^{\Gamma} (\underline{Y}_{{}})$
and
$q^* : \cC_\mot^{\Gamma} (\underline{Y}_{{}}) \to \cC_\mot^{\Gamma} (\underline{X}_{{}})$
such that
the diagrams 
 \begin{equation}\label{eq7}\xymatrix{  \cC_\acf^{\Gamma} (\underline{X}_{{}}) \ar[d]^{q_!} \ar[r]^-{\vartheta_{\underline{X}_{{}}}} &  \cC_\mot^{\Gamma} (\underline{X}_{{}})  \ar[d]^{q_!}  \\
							 \cC_\acf^{\Gamma} (\underline{Y}_{{}}) \ar[r]^-{\vartheta_{\underline{Y}_{{}}}} & \cC_\mot^{\Gamma} (\underline{Y}_{{}}) 
}\end{equation}
and
\begin{equation}\label{eq8} \xymatrix{  \cC_\acf^{\Gamma} (\underline{Y}_{{}}) \ar[d]^{q^*} \ar[r]^-{\vartheta_{\underline{Y}_{{}}}} &  \cC_\mot^{\Gamma} (\underline{Y}_{{}})  \ar[d]^{q^*}  \\
							 \cC_\acf^{\Gamma} (\underline{X}_{{}}) \ar[r]^-{\vartheta_{\underline{X}_{{}}}} & \cC_\mot^{\Gamma} (\underline{X}_{{}})
}\end{equation}
commute.

Let $S$ be in $\Def_{\acf,k}$
and consider an integrable function $\varphi$ in $\cC_\acf^{\Gamma} (S)$. 
One deduces similarly as in Proposition \ref{indrel} that
$\vartheta_{\ast_{k}}^{\delta} \Bigl(\int_S \varphi \, \vert \omega_0\vert_S\Bigr) $
depends only on 
$\vartheta_S^{\delta} (\varphi)$
and we denote it by
$\int_S^{\mathrm{mot}, \delta} \varphi \, \vert \omega_0\vert_S$.
This extends to the global case as above:
if $X$  is a
smooth algebraic variety over $k\llp t \rrp$ of pure dimension, $\omega_X$ a top degree form on $X$ and
$\varphi$ a function in $\cC_\acf^{\Gamma} (\underline{X})$ such that $\varphi \, \vert \omega_X \vert$ is integrable, then
$\vartheta_{\ast_{k}}^{\delta} \Bigl(\int_{\underline{X}} \varphi \, \vert \omega_X\vert_S\Bigr)$
depends only on $\vartheta^{\delta}_{\underline{X}}  (\varphi)$
and we denote it by 
$\int_{\underline{X}}^{\mathrm{mot}, \delta}   \varphi \, \vert \omega_X \vert$.
We will also set 
$$
\int_{\underline{X}}^{\mathrm{mot}}   \varphi \, \vert \omega_X \vert =
\sum_{\delta \in \widehat{\Gamma}} \int_{\underline{X}}^{\mathrm{mot}, \delta}   \varphi \, \vert \omega_X \vert.
$$
If $Y$  is an algebraic variety over $k$ and $\varphi$ is a function in $\cC_\acf^{\Gamma} (\underline{Y}_{{}})$, 
one defines similarly
$\int_{\underline{Y}_{{}}}^{\mathrm{mot}, \delta}   \varphi$
and
$\int_{\underline{Y}_{{}}}^{\mathrm{mot}}   \varphi$.

We may now state the following final avatar of Proposition \ref{fubini} which we will use in this paper. It follows directly from Proposition \ref{gfubini} 
and the above discussion.

\begin{proposition}\label{eqmotfubini}\begin{enumerate}
\item
Let $X$ and $Y$ be 
smooth algebraic varieties over $k\llp t \rrp$ of pure dimension. 
Let $f : X \to Y$ be a smooth morphism. 
Let $\omega_X$ and $\omega_Y$ be top degree forms on $X$ and $Y$, respectively, everywhere non-zero.
Let $\varphi \in \cC_\acf (\underline{X})$ such that $\varphi \, \vert \omega_X \vert$ is integrable.
 Let $\delta$ be in $\widehat \Gamma$.
There exists a definable subset $Z$ of $\underline{Y}$ of $K$-dimension at most $\dim Y - 1$
and
$\psi \in \cC_\acf^{\Gamma} (\underline{Y})$ with $\psi \, \vert \omega_Y \vert$ integrable such that,
for every point $y$  of $\underline{Y} \setminus Z$,
$$
\vartheta^{\delta}_{\underline{Y}} (\psi) (y) = \int_{\underline{X}_y}^{\mathrm{mot}, \delta} \varphi_{\vert \underline{X}_y} \, \vert \omega_X / f^* (\omega_Y) \vert_{\vert \underline{X}_y},
$$
and
$$
\int_{\underline{X}}^{\mathrm{mot}, \delta}  \varphi \, \vert \omega_X \vert =
\int_{\underline{Y}}^{\mathrm{mot}, \delta} \psi \, \vert \omega_Y \vert.
$$
\item
Let $X$  be a
smooth algebraic variety over $k\llp t \rrp$ of pure dimension. Let $Y$ be an algebraic variety over $k$.
Let  $\underline{f} : \underline{X} \to \underline{Y}_{{}}$ be a morphism in $\GDef_{\acf,k}$.
Let $\omega_X$ be a top degree form on $X$ which is everywhere non-zero.
Let $\varphi \in \cC_\acf (\underline{X})$ such that $\varphi \, \vert \omega_X \vert$ is integrable.
Then, there exists 
$\psi \in \cC_\acf^{\Gamma} (\underline{Y}_{{}})$ such that,
for every point $y$  of $\underline{Y}_{{}}$,
$$
\vartheta_{\underline{Y}_{{}}}^\delta (\psi) (y) = \int_{\underline{X}_y}^{\mathrm{mot}, \delta} \varphi_{\vert \underline{X}_y} \, \vert \omega_X  \vert_{\vert \underline{X}_y},
$$
and
$$
\int_{\underline{X}}^{\mathrm{mot}, \delta} \varphi \, \vert \omega_X \vert =
\int_{\underline{Y}_{{}}}^{\mathrm{mot}, \delta} \psi.
$$
\end{enumerate}
\end{proposition}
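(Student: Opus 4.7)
The plan is to reduce the statement directly to Proposition \ref{gfubini}, which is the equivariant analogue at the level of $\cC_\acf^{\Gamma}$, and then transport the resulting equality through the morphism $\vartheta^{\delta}$. More precisely, for part (1) I would apply Proposition \ref{gfubini} to the function $\varphi$, viewed in $\cC_\acf^{\Gamma}(\underline{X})$ via the natural forgetful inclusion (where $\Gamma$ acts trivially if $\varphi$ is not already equivariant, or with its given action otherwise), to obtain a definable subset $Z \subset \underline{Y}$ of $K$-dimension at most $\dim Y - 1$ and a function $\psi \in \cC_\acf^{\Gamma}(\underline{Y})$ with $\psi \,\vert \omega_Y \vert$ integrable, satisfying
\[
\psi(y) = \int_{\underline{X}_y} \varphi_{\vert \underline{X}_y}\, \vert \omega_X / f^*(\omega_Y)\vert_{\vert \underline{X}_y}
\quad\text{for } y \notin Z,
\quad
\int_{\underline{X}} \varphi\,\vert \omega_X \vert = \int_{\underline{Y}} \psi \,\vert \omega_Y \vert.
\]
Part (2) is handled analogously using the non-relative equivariant Fubini for a morphism to a $k$-variety.

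Next I would check that applying $\vartheta^{\delta}$ to both sides of the Fubini equality transports them to the desired motivic identity. This reduces to an equivariant version of Proposition-Definition \ref{indrel}: if $\varphi,\varphi' \in \cC_\acf^{\Gamma}(S)$ satisfy $\vartheta_S^{\delta}(\varphi) = \vartheta_S^{\delta}(\varphi')$ and are integrable, then $\vartheta_{\ast_k}^{\delta}(\int_S \varphi\,\vert \omega_0\vert_S) = \vartheta_{\ast_k}^{\delta}(\int_S \varphi'\,\vert \omega_0\vert_S)$. The proof is identical to that of Proposition \ref{indrel}: one follows the cell-decomposition construction of the motivic integral step by step, verifying that $\vartheta^{\delta}$ commutes with (i) pushforward along projections over residue field variables (which uses the equivariant commutative diagram (\ref{eq7}), applied with $\Gamma$ acting only on the residue field fibers), (ii) pullback under such projections (commutative diagram (\ref{eq8})), and (iii) integration over value group variables, which is trivial since the $\Gamma$-action lives entirely in the residue field direction and so commutes with Presburger combinations.

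With these tools in hand, the conclusion is immediate: defining
\[
\int_{\underline{X}_y}^{\mathrm{mot},\delta} \varphi_{\vert \underline{X}_y}\,\vert \omega_X / f^*(\omega_Y)\vert_{\vert \underline{X}_y} := \vartheta_{\ast_{k(y)}}^{\delta}\Bigl(\int_{\underline{X}_y} \varphi_{\vert \underline{X}_y}\,\vert \omega_X / f^*(\omega_Y)\vert_{\vert \underline{X}_y}\Bigr),
\]
and evaluating the equivariant Fubini relation at an arbitrary point $y \in \underline{Y} \setminus Z$ through $\vartheta^{\delta}$ gives the first desired identity; applying $\vartheta_{\ast_k}^{\delta}$ to the global equality, and invoking the equivariant version of Proposition \ref{indrel} to see that both sides depend only on $\vartheta^{\delta}_{\underline{X}}(\varphi)$ and $\vartheta^{\delta}_{\underline{Y}}(\psi)$, yields the second. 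The analogous argument with $\underline{Y}$ replaced by $\underline{Y}_{{}}$ settles part (2).

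The only genuine content beyond bookkeeping is the equivariant version of Proposition \ref{indrel}, and the main obstacle there is to check that the cell decomposition step (integration over cells in one variable in the valued field sort, cf.\ \cite[7.1]{CL-2008}) is compatible with the $\delta$-isotypical projection. This compatibility is ultimately a consequence of diagram (\ref{eq8}), which ensures that pullback in the residue field sort commutes with $\vartheta^{\delta}$; everything else is formal manipulation of tensor products and projectors, since $\Lambda = \QQ(\mu_r)$ is of characteristic zero and the pseudo-abelian structure on $\DAC(S,\Lambda)$ used to define $p_{\delta}$ is preserved under the six operations.
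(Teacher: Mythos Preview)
Your proposal is correct and follows exactly the paper's approach: the paper's proof consists of the single sentence ``It follows directly from Proposition \ref{gfubini} and the above discussion,'' where ``the above discussion'' is precisely the equivariant version of Proposition \ref{indrel} (established via diagrams (\ref{eq7}) and (\ref{eq8})) that you have spelled out in detail. If anything, you have supplied more of the verification than the paper does.
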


\subsection{Some useful facts}
We collect here some well-known facts that will be used later in the paper.

Let $X$ be quasi-projective variety over a field of characteristic zero on which a finite group 
$\Gamma$ acts algebraically and assume that, for every point $x$
of $X$, the $\Gamma$-orbit of $x$ is contained in an affine subset of $X$.
Denote by $h : X \to Y := X / \Gamma$ the quotient map.
The morphism $\mathds{1}_Y \to h_*h^* (\mathds{1}_Y) = h_* (\mathds{1}_X)$ given by the unit of the adjunction
induces an isomorphism
\begin{equation}\label{motinv} \mathds{1}_Y \cong h_* (\mathds{1}_X)^1,\end{equation}
the isotypical part of $h_* (\mathds{1}_X)$ for the trivial character. This can be checked directly from the definitions, or by proper base change one can reduce
to the case where $Y$ is a point which is even simpler.
Note that since $h$ is proper one can identify
$h_* (\mathds{1}_X)$ with $h_! (\mathds{1}_X)$.

If $\pi : X \to \Spec k$ is an algebraic variety over $k$, we shall consider its homological Chow motive  $M (X) := \pi_! \pi^!  (\mathds{1}_{\Spec k})$.

\begin{lemma}\label{isoge}
Let $k$ be a field of characteristic zero.
Let $A$ and $B$ be connected commutative algebraic groups over $k$ and let 
$p : A \to B$ be an isogeny. 
Then the 
morphism
$M (A) \to M (B)$ induced by $p$ is an isomorphism.
\end{lemma}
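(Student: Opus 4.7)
The plan is to mimic the classical argument for isogenies of abelian varieties: construct a ``dual'' isogeny $q: B \to A$ satisfying $q \circ p = [n]_A$ and $p \circ q = [n]_B$, and then reduce the claim to the invertibility of the multiplication-by-$n$ endomorphism on the motive of a connected commutative algebraic group. First I would set $K = \ker(p)$, a finite commutative group scheme of some order $n$; in characteristic zero, $K$ is étale. Since $K$ is killed by $[n]$, the endomorphism $[n]_A : A \to A$ factors uniquely through $p$, giving a homomorphism $q : B \to A$ with $q \circ p = [n]_A$. The companion identity $p \circ q = [n]_B$ follows by composing on the right with $p$: one has $p \circ q \circ p = p \circ [n]_A = [n]_B \circ p$ because $p$ is a group homomorphism, and since $p$ is an epimorphism of (commutative) algebraic groups we may cancel on the right.

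Next, applying the covariant functor $M(-)$ yields
\[
M(q) \circ M(p) = M([n]_A), \qquad M(p) \circ M(q) = M([n]_B)
\]
in $\DAC(k,\Lambda)$. Consequently, if we can show that $M([n]_G) : M(G) \to M(G)$ is an isomorphism for every connected commutative algebraic group $G$ over $k$, then $M(p)$ admits both a left and a right inverse (up to invertible twists) and is therefore itself an isomorphism, proving the lemma.

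It remains to justify the invertibility of $M([n]_G)$; this is the main obstacle, and it is where the hypotheses that $k$ has characteristic zero and that $\Lambda$ has characteristic zero enter crucially. By Chevalley's structure theorem, $G$ sits in a short exact sequence $0 \to L \to G \to A' \to 0$ with $A'$ an abelian variety and $L$ a connected commutative linear group, which in characteristic zero is a product of a torus and a unipotent group. On each of these building blocks $[n]_*$ is readily seen to be invertible with $\Lambda$-coefficients: on an abelian variety, via the Deninger--Murre decomposition $M(A') = \bigoplus_i h^i(A')$, on which $[n]_*$ acts as multiplication by a power of $n$; on a torus, by a direct computation on Tate twists; on a unipotent group, because $[n]$ is itself an isomorphism of schemes. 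One then propagates invertibility to $M(G)$ using the distinguished triangle in $\mathrm{DM_{gm}}$ attached to the $L$-torsor $G \to A'$, together with the triangulated five-lemma. Although the last step is technical in the non-proper case, it is classical and can be invoked as a black box.
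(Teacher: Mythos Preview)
Your proposal is correct and follows essentially the same approach as the paper: both reduce the lemma to the assertion that $M([n]_G)$ is invertible for every connected commutative algebraic group $G$ over $k$. The paper simply cites Theorem~7.1.1(1) of Ancona--Enright-Ward--Huber for the semiabelian case and Theorem~3.3(4) of Ancona--Huber--Pepin~Lehalleur for the general case, whereas you sketch the underlying argument via Chevalley's theorem and the Deninger--Murre decomposition; your final ``black box'' step is exactly what those references supply.
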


\begin{proof}It is enough to check that for any non-zero natural number $n$, the morphism
$[n] : M (A) \to M (A)$  induced by multiplication by $n$ is an isomorphism, which  follows from 
Theorem 7.1.1 (1) in \cite{AEH} in the semiabelian case
and from Theorem 3.3 (4) in 
\cite{AHP} in general.
\end{proof}

\begin{lemma}\label{isomot}Let $k$ be a field of characteristic zero.
Let $p : A \to B$ be an isogeny between connected commutative algebraic groups over $k$ with kernel $\Gamma$.
Denote by  $\pi_A$ and $\pi_B$ the structural morphisms.
The finite abelian group $\Gamma$
acts on the object
$p_! (\mathds{1}_A)$ of
$\DAC (B, \Lambda)$ and for
any character $\delta : \Gamma \to \Lambda^{\times}$ we denote by
$p_! (\mathds{1}_A)^{\delta}$ its $\delta$-isotypical component.
Then $\pi_{B!} (p_! (\mathds{1}_A)^{\delta}) = 0$ if
$\delta$ is non trivial and
$\pi_{B!} (p_! (\mathds{1}_A)^{\delta}) = \pi_{B!} (p_! (\mathds{1}_A))$ if
$\delta$ is trivial.
\end{lemma}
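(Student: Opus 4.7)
The plan is to combine Lemma \ref{isoge} with the isotypic decomposition of $p_!(\mathds{1}_A)$, using the isomorphism \eqref{motinv} to locate the trivial summand. Since $p$ is an isogeny in characteristic zero, it is finite \'etale and presents $B = A/\Gamma$ as the quotient for the translation action of $\Gamma$; in particular $p$ is proper, so $p_! = p_*$. The counit $\varepsilon : p_!(\mathds{1}_A) = p_!p^*(\mathds{1}_B) \to \mathds{1}_B$ of $p_! \dashv p^!$ is $\Gamma$-equivariant (with $\Gamma$ acting trivially on $\mathds{1}_B$), so it factors through the trivial summand $p_!(\mathds{1}_A)^1$, and under the identification $p_!(\mathds{1}_A)^1 \cong \mathds{1}_B$ provided by \eqref{motinv} it restricts to a non-zero scalar (in fact $|\Gamma|$, by the triangle identity) times the canonical isomorphism.

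Applying $\pi_{B!}$ to the decomposition $p_!(\mathds{1}_A) = \bigoplus_{\delta \in \widehat\Gamma} p_!(\mathds{1}_A)^\delta$ and using $\pi_{A!} = \pi_{B!} \circ p_!$ yields
\[
\pi_{A!}(\mathds{1}_A) \;=\; \bigoplus_{\delta \in \widehat\Gamma} \pi_{B!}(p_!(\mathds{1}_A)^\delta),
\]
and $\pi_{B!}(\varepsilon) : \pi_{A!}(\mathds{1}_A) \to \pi_{B!}(\mathds{1}_B)$ becomes, up to a non-zero scalar, the projection onto the $\delta = 1$ summand. The conclusion of the lemma is therefore equivalent to showing that this projection is an isomorphism, which immediately forces $\pi_{B!}(p_!(\mathds{1}_A)^\delta) = 0$ for every non-trivial $\delta$ and identifies the $\delta = 1$ summand with all of $\pi_{B!}(p_!(\mathds{1}_A))$.

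This is where Lemma \ref{isoge} enters. Setting $d = \dim A = \dim B$ and using smooth duality $\pi_X^!\mathds{1} = \mathds{1}_X(d)[2d]$ together with $p^! = p^*$ for \'etale $p$, one rewrites $M(X) = \pi_{X!}(\mathds{1}_X)(d)[2d]$ for $X \in \{A,B\}$ and checks that the morphism $M(A) \to M(B)$ produced by Lemma \ref{isoge} is obtained from $\pi_{B!}(\varepsilon)$ by applying the invertible Tate twist and shift $(d)[2d]$. Lemma \ref{isoge} asserts that this morphism is an isomorphism, so $\pi_{B!}(\varepsilon)$ is an isomorphism as well, which completes the proof. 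The step demanding the most care is this last identification --- matching the abstract morphism of Lemma \ref{isoge} with the concrete counit at the level of constant motives; everything else is formal bookkeeping with adjunctions and isotypic components.
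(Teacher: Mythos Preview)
Your argument is correct and essentially the same as the paper's. The only cosmetic difference is that the paper starts with the unit $\mathds{1}_B \to p_*(\mathds{1}_A)$ and Verdier-dualizes it to obtain a map $p_!\pi_A^!(\mathds{1}_{\Spec k}) \to \pi_B^!(\mathds{1}_{\Spec k})$, whereas you work directly with the counit $\varepsilon : p_!(\mathds{1}_A) \to \mathds{1}_B$; since $p$ is finite \'etale these two maps differ only by the invertible twist $(d)[2d]$, and both arguments then apply $\pi_{B!}$ and invoke Lemma~\ref{isoge} in the same way. (A minor remark: what you call ``the triangle identity'' is really the degree formula $\varepsilon\circ\eta = |\Gamma|\cdot\id$ combining the two adjunctions $p^*\dashv p_*$ and $p_!\dashv p^!$; in any case this scalar is not actually needed, since once $\pi_{B!}(\varepsilon)$ is known to be an isomorphism and to annihilate the non-trivial summands, both conclusions follow.)
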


\begin{proof}As we already observed,
the morphism 
\begin{equation}\label{eq11}
\mathds{1}_B \longrightarrow  p_*(\mathds{1}_A)
\end{equation}
induces an isomorphism between  $\mathds{1}_B$ and $p_* (\mathds{1}_A)^1$.
Dualizing this morphism, one gets a morphism
\begin{equation}\label{eq12}
p_! \pi_A^! (\mathds{1}_{\Spec k}) \longrightarrow   \pi_B^!  (\mathds{1}_{\Spec k})
\end{equation}
whose restriction to  
$(p_! \pi_A^! (\mathds{1}_{\Spec k}))^{\delta}$ is zero
when $\delta$ is non trivial and an isomorphism when $\delta$ is trivial.
Applying $\pi_{B!}$ to both sides
of (\ref{eq12}) one gets a morphism $\pi_{A!} \pi_A^! (\mathds{1}_{\Spec k}) \to \pi_{B!} \pi_B^! (\mathds{1}_{\Spec k})$ which 
is an isomorphism by 
Lemma \ref{isoge}.
It follows that
$\pi_{B!} ((p_! \pi_A^! (\mathds{1}_{\Spec k}))^{\delta})$ is zero
when $\delta$ is non trivial and isomorphic to 
$\pi_{B!} \pi_B^! (\mathds{1}_{\Spec k})$ when $\delta$ is trivial.
The statement follows since
$\pi_A^! (\mathds{1}_{\Spec k}) \simeq \mathds{1}_A (d) [2d]$
and
$\pi_B^! (\mathds{1}_{\Spec k}) \simeq \mathds{1}_B (d) [2d]$, with $d$ the dimension of both $A$ and $B$.
\end{proof}

\begin{corollary}\label{autab}
Let $k$ be a field of characteristic zero.
Let $\pi : A \to \Spec k$ be a connected commutative algebraic group
and let  $\xi \in A (k)$ be a torsion point. Denote by 
$g_\xi$ the endomorphism given by translation by  $\xi$.
Then $g_\xi$ acts trivially on
$\pi_! (\mathds{1}_{A}) \simeq M (A) (-d) [2d]$.
\end{corollary}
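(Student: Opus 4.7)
The plan is to reduce the statement to Lemma \ref{isomot} by choosing the isogeny $p = [n] : A \to A$, where $n$ is the order of the torsion point $\xi$. Since $\xi$ lies in the kernel $A[n]$ of $[n]$, and since $[n] \circ g_\xi = [n]$ as morphisms $A \to A$, the translation $g_\xi$ appears precisely as one of the deck transformations of the isogeny $p$.

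First, I would apply Lemma \ref{isomot} to $p = [n]$, with $B = A$ and $\Gamma = A[n]$. Since the structural morphism satisfies $\pi \circ p = \pi$ (as $p$ is an $A$-endomorphism composed with the map to $\Spec k$), we have
\[
\pi_! \bigl( p_! (\mathds{1}_A) \bigr) \;\simeq\; \pi_! (\mathds{1}_A) \;\simeq\; M(A)(-d)[2d].
\]
By the lemma, $\pi_!(p_!(\mathds{1}_A)^{\delta}) = 0$ for every non-trivial character $\delta$ of $A[n]$, so the whole of $\pi_!(\mathds{1}_A)$ comes from the trivial isotypical piece $p_!(\mathds{1}_A)^1$. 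In particular, the induced $A[n]$-action on $\pi_!(\mathds{1}_A)$ is trivial.

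Second, I would identify the induced action with the translation action. The $A[n]$-action on $p_!(\mathds{1}_A)$ used in Lemma \ref{isomot} is the one coming from the fact that, for every $\eta \in A[n]$, the translation $g_\eta : A \to A$ is an automorphism of $A$ over $A$ via $p$ (i.e.\ $p \circ g_\eta = p$); concretely it is the natural transformation $p_!(\mathds{1}_A) \to p_!(g_\eta)_!(\mathds{1}_A) = p_!(\mathds{1}_A)$ coming from the canonical identification $(g_\eta)_!(\mathds{1}_A) \simeq \mathds{1}_A$. Applying $\pi_!$ and using $\pi_! \circ p_! = \pi_!$ and $\pi \circ g_\eta = \pi$, the resulting endomorphism of $\pi_!(\mathds{1}_A)$ is precisely the one induced functorially by $g_\eta$ on $\pi_!(\mathds{1}_A)$, i.e.\ the translation action on $M(A)(-d)[2d]$. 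Specializing to $\eta = \xi$ and combining with the previous paragraph gives the claim.

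The only potentially delicate step is the compatibility asserted in the second paragraph, namely that the action of $\xi$ obtained through the isotypical decomposition of $p_!(\mathds{1}_A)$ coincides with the translation action on $\pi_!(\mathds{1}_A)$ directly defined via $\pi \circ g_\xi = \pi$. However, both actions are constructed from the same datum (the automorphism $g_\xi$ of $A$) via the covariant functoriality of lower-shriek pushforward, so their equality is a formal consequence of the $2$-functoriality of the six operations; no further geometric input is needed.
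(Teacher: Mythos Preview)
Your proof is correct and follows essentially the same route as the paper: both reduce to Lemma \ref{isomot} by choosing an isogeny from $A$ whose kernel contains $\xi$. The only difference is the choice of isogeny: you take $[n]\colon A\to A$ (with kernel $A[n]$), whereas the paper takes the quotient $A\to A/\langle\xi\rangle$ (with kernel exactly $\langle\xi\rangle$). Your careful verification that the $\Gamma$-action on $\pi_{B!}p_!(\mathds{1}_A)$ coincides with the translation action on $\pi_!(\mathds{1}_A)$ is correct and is implicit in the paper's one-line proof as well.
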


\begin{proof}Follows directy from Lemma \ref{isomot} applied to
the isogeny $A \to B$ with $B$ the quotient of $A$ by the subgroup generated by $\xi$.
\end{proof}


\subsection{Functions from quasi-finite morphisms}\label{quasifinite}

Consider a quasi-finite morphism 
\[ f:X \longrightarrow Y   \] between $k\llp t \rrp$-varieties  of pure dimension.
Then any  $S \subset \underline{X}$ in $\GDef_{\acf,k}$ is isomorphic to an
object in 
 $\RDef_{\acf,\underline{Y}}$ by an isomorphism commuting with the map to $\underline{Y}$. Indeed, this follows from Corollary 7.2.3 of  \cite{CL-2008} (there is an unfortunate typo, $r$ should read $0$).
 
Thus, we may attach to  $S$ a constructible motivic function in $\cC_\acf(\underline{Y})$ (or in $\cC_\acf(\underline{f}(S))$
which we still denote by $S$. 
Assume now that  $\Gamma$ acts on $X$,  $f$ is $\Gamma$-invariant and $S$ is stable under $\Gamma$.
We define similarly a function in $\cC_\acf^\Gamma(\underline{Y})$ (or in $\cC_\acf^\Gamma(\underline{f}(S))$ which we will also denote by $S$.

\begin{rmk}\label{support} The support $\underline{f}(S)$ of $S$ as a function is in general smaller than the subassignment associated with the image $\underline{f(S)}$. In fact a point $x \in \underline{f(S)}(K)$ belongs to $\underline{f}(S)$ if the fiber $f^{-1}(x)$ admits a $K'\llp t \rrp$-rational point for some extension $K' /K$.
\end{rmk}

The following lemma allows us to compute with these functions.

\begin{lemma}\label{qfint} Let $ f:X \longrightarrow Y$ be an \'etale morphism between smooth $k\llb t \rrb$-varieties, both of pure relative dimension $d$, and 
let $\omega$ be a degree $d$ differential form on $Y$ which generates the $k\llb t \rrb$-module of degree $d$ differential forms everywhere. Then we have
\[ \int_{\underline{Y}_{\circ}} X |\omega| = \BL^{-d} [X_k],  \]
with $X_k$ the special fiber of $X$.
If a finite abelian group $\Gamma$ acts on $X$ and $f$ is $\Gamma$-invariant, the same holds $\Gamma$-equivariantly.
\end{lemma}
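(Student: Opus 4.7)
The strategy is to reduce the integral on the generic fiber to one on the special fiber via reduction modulo $t$. Since $Y$ is smooth over $k\llb t \rrb$, one has a well-defined morphism of subassignments $\rho \colon \underline{Y}_{\circ} \to \underline{(Y_k)}_{{}}$ in $\GDef_{\acf,k}$ sending each $K\llb t \rrb$-point of $Y$ to its specialization in $Y_k(K)$. I would proceed in two steps: first, identify $X|_{\underline{Y}_{\circ}}$ with the pullback $\rho^{*}(X_k)$, where $X_k \to Y_k$ is viewed as an element of $\cC_\acf(\underline{(Y_k)}_{{}}) \simeq \cM_{Y_k} \otimes_{\BZ[\BL,\BL^{-1}]} \BA$; second, apply Fubini along $\rho$ to extract the factor $\BL^{-d}$.

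For the first step, constructible motivic functions are determined by their pointwise values on $K$-points for algebraically closed $K \supset k$ (the principle used in Lemma \ref{evmot} and in the evaluation discussion preceding Proposition \ref{indrel}). It therefore suffices to show that, for each $y \in \underline{Y}_{\circ}(K)$, the value of the function $X$ at $y$ coincides with $[X_{k,\rho(y)}]$. Since $f$ is \'etale, the scheme $X \times_Y \Spec K\llb t \rrb$ is finite \'etale over the strictly Henselian discrete valuation ring $K\llb t \rrb$, and so splits as a trivial cover whose sheet-set is naturally $X_{k,\rho(y)}(K)$. This produces the required equality of values.

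For the second step, Proposition \ref{motfubini}(2) applied to $\rho$ expresses the integral as $\int_{\underline{(Y_k)}_{{}}} \psi$ with $\psi(\bar y) = [X_{k,\bar y}]\cdot \int_{\rho^{-1}(\bar y)} |\omega|_{|\rho^{-1}(\bar y)}$. The fiber $\rho^{-1}(\bar y)$ is the formal $d$-disc of $K\llb t \rrb$-lifts of the smooth point $\bar y \in Y_k(K)$; because $\omega$ generates the sheaf of top-degree forms on $Y$ everywhere, its $|\omega|$-volume equals the standard $\BL^{-d}$ (the classical volume computation for smooth $k\llb t \rrb$-schemes). Thus $\psi$ represents $\BL^{-d}\cdot [X_k \to Y_k]$ as a class in $\cM_{Y_k}\otimes \BA$, and pushing forward to the point yields $\BL^{-d}[X_k]$. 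The $\Gamma$-equivariant statement is then automatic: the trivialization of $X\times_Y \Spec K\llb t \rrb$ in Step 1 is $\Gamma$-equivariant because $f$ is $\Gamma$-invariant and $\Gamma$ permutes the sheets, and the same Fubini argument applies in $\cC^\Gamma_\acf$ by Proposition \ref{eqmotfubini}.

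The main technical obstacle I expect is Step 1: the function $X\in \cC_\acf(\underline{Y})$ is manufactured via an auxiliary affine embedding produced by Corollary 7.2.3 of \cite{CL-2008}, so identifying $X|_{\underline{Y}_{\circ}}$ with $\rho^{*}(X_k)$ at the level of definable subassignments (rather than only pointwise) would be delicate to pin down. The pointwise approach above sidesteps this issue by reducing everything to the Hensel-lemma description of finite \'etale covers of $\Spec K\llb t \rrb$.
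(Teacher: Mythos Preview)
Your proof is correct and follows essentially the same route as the paper: identify the function $X$ on $\underline{Y}_\circ$ with $\underline{\ev_Y}^*(X_k)$, then use the volume formula for smooth $k\llb t\rrb$-schemes (Lemma~\ref{reduc}) to extract $\BL^{-d}[X_k]$.

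The one difference worth noting concerns precisely the obstacle you flagged in Step~1. Rather than checking the identification pointwise, the paper observes that
\[
(\underline{f},\,\underline{\ev_X})\colon \underline{X}_\circ \longrightarrow \underline{Y}_\circ \times_{\underline{Y_k}} \underline{X_k}
\]
is an isomorphism of definable subassignments, because $f$ is \'etale (Hensel). This is exactly the statement that $\underline{X}_\circ$ over $\underline{Y}_\circ$ agrees with the pullback $\underline{\ev_Y}^*(X_k)$ at the level of objects in $\RDef$, so the equality of constructible functions is immediate without invoking the pointwise separation principle from \cite{eval}. Your pointwise argument is valid too, but the direct subassignment-level identification is cleaner and also makes the $\Gamma$-equivariant case transparent, since the isomorphism $(\underline{f},\underline{\ev_X})$ is visibly $\Gamma$-equivariant. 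For Step~2 the paper simply cites Lemma~\ref{reduc}, which packages the Fubini-plus-fibre-volume computation you spell out.
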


\begin{proof}
Consider the definable morphism $\underline{\ev_X}: \underline{X}_{\circ} \to \underline{X_k}_{{}}$
 given by reducing modulo $t$, and define similarly $\underline{\ev_Y}$.
 The morphism $(\underline{f}, \underline{\ev_X}): \underline{X}_{\circ} \to \underline{Y}_{\circ} \times \underline{X_k}_{{}}$
 is an isomorphism since $f$ is \'etale.
 Thus the function in $\cC_\acf(\underline{Y}_{\circ})$ associated to $\underline{X}_{\circ}$ is of the form
 $\underline{\ev_Y}^* (\varphi)$ with $\varphi$ the function $\cC_\acf(\underline{Y_k}_{{}})$ associated to $\underline{X_k}_{{}}$.
 The lemma follows now directly from Lemma \ref{reduc}.
 \end{proof}




\subsection{Twisting}\label{twsec}

Let $\Gamma$ and $\Gamma'$ be two finite commutative groups.
Let $S$ be an object of $\GDef_{\acf,k}$ and let $\varphi \in \RDef_{\acf,S}^{\Gamma \times \Gamma'}$, $\psi \in \RDef_{\acf,S}^{\Gamma}$. 
We let $\Gamma$ act anti-diagonally on the product
$\psi \times \varphi$ in $\RDef_{\acf,S}$ by $\gamma (a, b) = (\gamma a, \gamma^{-1} b)$ and consider the quotient 
\[\psi \times^\Gamma \varphi :=  \psi \times_S \varphi / \Gamma\] 
as an object of $\RDef_{\acf,S}^{\Gamma'}$.
We get this way a morphism
$\cdot \times^\Gamma \varphi: \RDef_{\acf,S}^{\Gamma} \to \RDef_{\acf,S}^{\Gamma'}$
which extends by $\mathcal{P} (S)$-linearity to a morphism $\cdot \times^\Gamma \varphi: \cC_\acf^{\Gamma} (S) \to \cC_\acf^{\Gamma'} (S)$.
We record two lemmas for later use. The first explains the effect of unramified twisting on integrals.

\begin{lemma}\label{unrtw}Let $X$ be a smooth algebraic variety of pure dimension over $k \llp t \rrp$
and let $\omega$ be a top degree form.
Let $S \subset \underline{X}$ be in $\GDef_{\acf,k}$.
Let $\varphi \in \cC^{\Gamma \times \Gamma'}_\acf(\ast_{k})$ and $\psi \in \cC^\Gamma_\acf(S)$ be integrable. We have
\[ \int_{S} \psi \times^\Gamma p^*(\varphi) \vert \omega \vert = \left( \int_S \psi \vert \omega \vert \right) \times^\Gamma \varphi,\]
where $p:S \to \ast_{k}$ is the morphism to the point.
\end{lemma}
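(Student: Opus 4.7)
The plan is to verify the identity by following the inductive construction of the motivic integral in \cite{CL-2008} step by step, and checking at each step that the twisting by $p^*(\varphi)$ commutes with the operation performed. This parallels the strategy used in the proofs of Proposition \ref{indrel} and Proposition \ref{qcomi}.

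First, I would unpack the definition of $\cdot \times^\Gamma p^*(\varphi)$. Since $p^*(\varphi)$ is pulled back from the terminal object $\ast_{k}$, the subassignment $\psi \times_S p^*(\varphi)$ agrees with $\psi \times_{\Spec k} \varphi$ equipped with the anti-diagonal $\Gamma$-action on the second factor and structure map inherited from $\psi$. In particular, $\cdot \times^\Gamma p^*(\varphi) : \cC^\Gamma_\acf(S) \to \cC^{\Gamma'}_\acf(S)$ is $\mathcal{P}(S)$-linear, and it is in essence the "external product by a fixed class followed by quotient by the anti-diagonal $\Gamma$-action".

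Second, I would run through the three elementary building blocks of the integral in \cite[Sect. 10]{CL-2008}: (i) integration over a cell in one valued-field variable, which is built from pullbacks along the $\ac$ and $\ord$ maps together with products with residue-field and value-group parameters; (ii) integration over residue-field variables, which is a push-forward $f_!$ in $K_0(\RDef^\Gamma_{\acf,-})$; and (iii) integration over value-group variables, which is a $\mathcal{P}(S)$-linear sum of Presburger functions. For (i) and (iii), $\mathcal{P}(S)$-linearity of the twisting together with the fact that pullbacks to residue-field and value-group sorts obviously commute with external products by a fixed class from the point settles the matter. For (ii), the commutativity I need is a $\Gamma$-equivariant base-change/projection formula: for a morphism $f$ of objects in $\RDef$, one has $f_!(\psi \times_{S} p^*\varphi) = f_!(\psi) \times_{T} p'^*\varphi$, and the anti-diagonal $\Gamma$-quotient commutes with $f_!$ because $\Gamma$ acts only on the second factor of an external product and $f_!$ is $\Gamma$-equivariant by construction (Section \ref{equicase}).

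Finally, I would assemble these commutations exactly as in the proof of Proposition \ref{indrel}: the motivic integral on $\cC^\Gamma_\acf(S)$ is defined by reducing, via cell decomposition, to iterated applications of (i)--(iii), and since each of these commutes with $\cdot \times^\Gamma p^*(\varphi)$ (and with $\cdot \times^\Gamma \varphi$ on the point side in the corresponding way), the same holds for the integral itself, which gives the desired identity. The main obstacle in this plan is the equivariant base-change step (ii): one must be sure that the $\Gamma$-equivariant $f_!$ in the theory of $\cC^\Gamma_\acf$ does satisfy the projection-formula identity used above. This should however follow directly from the fact, emphasized in Section \ref{equicase}, that the entire integration theory of \cite{CL-2008} carries over verbatim to the equivariant setting, together with the observation that anti-diagonal $\Gamma$-quotients of external products by a \emph{constant} $\Gamma$-object are manifestly preserved by push-forward.
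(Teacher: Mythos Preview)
Your proposal is correct, but it takes a longer route than the paper. The paper's proof is a two-line reduction: first apply the projection formula already established as Proposition 13.2.1 (2) in \cite{CL-2008} (carried over verbatim to the equivariant setting, as noted in Section \ref{equicase}) to obtain
\[
\int_S \bigl(\psi \times p^*(\varphi)\bigr)\,|\omega| \;=\; \Bigl(\int_S \psi\,|\omega|\Bigr)\times \varphi
\]
for the ordinary fibre product (no quotient), and then apply Proposition \ref{qcomi}, i.e.\ commutation of the quotient map $q$ with integration, to pass from $\times$ to $\times^\Gamma$ on both sides. Your argument, by contrast, effectively re-derives the projection formula in this special case by running through the cell-decomposition construction of the integral and checking commutation at each of the three elementary steps; this is exactly how the projection formula itself is proved in \cite{CL-2008}, so you are reproving an existing black box rather than invoking it. Both approaches are valid; the paper's is shorter because it separates the two ingredients (projection formula, then $q$-commutation) and cites each, while yours handles the twist $\cdot \times^\Gamma p^*(\varphi)$ in one pass.
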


\begin{proof}Follows directly from the projection formula Proposition 13.2.1 (2) in \cite{CL-2008} and
Proposition \ref{qcomi}.
\end{proof}


The second lemma is about the effect of twisting on isotypical components. For this we assume $S$ to be a finite type $k\llp t\rrp$-scheme and $T$ a $\Gamma$-torsor on $S$. We can consider $\underline{T}$ as an element in $\cC^{\Gamma \times \Gamma}_\acf(\underline{S})$ by doubling the $\Gamma$-action.

\begin{lemma}\label{twnotw} Let $\delta:\Gamma \rightarrow \mu_n$ be a character such that the push-forward $\delta_*T \in H^1(S,\mu_n)$ is trivial. Then we have for any $\psi \in \cC^\Gamma_\acf(\underline{S})$ the equality
\[ \vartheta^{\delta}_{\underline{S}}\left( \psi \times^\Gamma \underline{T} \right) = \vartheta^{\delta}_{\underline{S}}\left( \psi \right). \]
\end{lemma}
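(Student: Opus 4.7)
The strategy is to pass to $\cC_\mot^{\Gamma}(\underline{S})$ via $\vartheta$ and argue by character decomposition in the pseudo-abelian category $\DAC(S, \Lambda)$, exploiting that $|\Gamma|$ is invertible in $\Lambda = \QQ(\mu_r)$. Because both $\Gamma$-actions on $\underline{T}$ coincide with the torsor action, the image $\vartheta_{\underline{S}}(\underline{T}) \in \cC_\mot^{\Gamma \times \Gamma}(\underline{S})$ has vanishing $(\chi_1, \chi_2)$-isotypical component unless $\chi_1 = \chi_2 =: \chi$. The diagonal summand $\vartheta_{\underline{S}}(\underline{T})^{\chi,\chi}$ is, up to canonical isomorphism, the motivic function attached to the push-forward $\mu_n$-torsor $\chi_* T \in H^1(S, \mu_n)$, with both $\Gamma$-actions of weight $\chi$; this is the standard description of the $\chi$-isotypical summand of $\pi_! \mathds{1}_T$ in $\DAC(S, \Lambda)$ for a finite \'etale $\Gamma$-torsor $\pi : T \to S$.

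Decomposing also $\vartheta_{\underline{S}}(\psi) = \sum_{\chi'} \vartheta_{\underline{S}}(\psi)^{\chi'}$, and using that $\vartheta$ is compatible with $\times^{\Gamma}$ (as it is a ring morphism compatible with the pushforward and pullback entering the definition of $\times^{\Gamma}$, cf.\ diagrams (\ref{eq7})--(\ref{eq8})), one obtains
\[ \vartheta_{\underline{S}}(\psi \times^{\Gamma} \underline{T}) = \sum_{\chi, \chi'} \vartheta_{\underline{S}}(\psi)^{\chi'} \times^{\Gamma} \vartheta_{\underline{S}}(\underline{T})^{\chi,\chi}. \]
The antidiagonal $\Gamma$-action on the $(\chi', \chi)$-summand has character $\chi' \chi^{-1}$, so its coinvariants vanish unless $\chi' = \chi$; when $\chi' = \chi$ the antidiagonal acts trivially, the operation $\times^{\Gamma}$ reduces to the product in $\cC_\mot^{\Gamma}(\underline{S})$, and the remaining $\Gamma$-action on $\vartheta_{\underline{S}}(\underline{T})^{\chi,\chi}$ has weight $\chi$. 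Applying $\vartheta^{\delta}_{\underline{S}}$ therefore isolates the $\chi = \delta$ summand:
\[ \vartheta^{\delta}_{\underline{S}}(\psi \times^{\Gamma} \underline{T}) = \vartheta_{\underline{S}}(\psi)^{\delta} \cdot \vartheta_{\underline{S}}(\underline{T})^{\delta,\delta}. \]

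Finally, the hypothesis that $\delta_* T \in H^1(S, \mu_n)$ is trivial provides a global section, trivializing $\delta_* T$ as a $\mu_n$-torsor and hence as an \'etale motive, so $\vartheta_{\underline{S}}(\underline{T})^{\delta,\delta} \cong \mathds{1}_{\underline{S}}$. Combined with the previous display, this yields $\vartheta^{\delta}_{\underline{S}}(\psi \times^{\Gamma} \underline{T}) = \vartheta_{\underline{S}}(\psi)^{\delta} = \vartheta^{\delta}_{\underline{S}}(\psi)$, as desired. The main subtlety is the identification of the $\chi$-isotypical summand of $\pi_! \mathds{1}_T$ with the motive of the $\mu_n$-torsor $\chi_* T$; this boils down to standard properties of rank-one \'etale local systems on $S$ and to the fact that, under the identification $\mu_n \hookrightarrow \Lambda^{\times}$, the classification of such local systems by their $H^1(S, \mu_n)$-class passes through $\vartheta$.
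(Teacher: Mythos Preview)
Your argument is correct, but the paper takes a shorter and more elegant route that avoids the full isotypical decomposition. The paper first proves the identity
\[
\vartheta^{\delta}_{\underline{S}}(\psi) \;=\; \vartheta^{\id}_{\underline{S}}\bigl(\psi \times^{\Gamma} \underline{\mu_n}\bigr),
\]
where $\Gamma$ acts on $\mu_n$ through $\delta$ and $\id:\mu_n\to\mu_n$ is the identity character; this follows directly from (\ref{motinv}) by recognising both sides as the $\widetilde\delta$-component of $\psi \times \mu_n$ for the character $\widetilde\delta(\gamma,\xi)=\delta(\gamma)\xi$ of $\Gamma\times\mu_n$. Once this is in hand, associativity of $\times^\Gamma$ together with the hypothesis $T\times^\Gamma\mu_n = \delta_*T \cong \mu_n$ immediately gives
\[
\vartheta^{\delta}_{\underline{S}}(\psi\times^\Gamma\underline{T}) = \vartheta^{\id}_{\underline{S}}(\psi\times^\Gamma\underline{T}\times^\Gamma\underline{\mu_n}) = \vartheta^{\id}_{\underline{S}}(\psi\times^\Gamma\underline{\mu_n}) = \vartheta^{\delta}_{\underline{S}}(\psi).
\]
Compared to your approach, this bypasses the need to identify each $(\chi,\chi)$-summand of $\pi_!\mathds{1}_T$ with the rank-one motive of $\chi_*T$ (which, while true, is the step you yourself flag as the main subtlety) and replaces the explicit character bookkeeping by a single use of associativity. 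Your approach has the merit of making the mechanism transparent: it shows exactly which isotypical piece survives and why the triviality of $\delta_*T$ forces that piece to be $\mathds{1}_{\underline{S}}$.
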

\begin{proof}
We use the observation, that for any $\psi \in \cC^\Gamma_\acf(\underline{S})$ we have 
\[\vartheta^{\delta}_{\underline{S}}\left( \psi \right) = \vartheta^{\id}_{\underline{S}}\left( \psi \times^\Gamma \underline{\mu_n} \right),\]
where on the right hand side $\Gamma$ acts on $\mu_n$ through $\delta$ and $\id:\mu_n \to \mu_n$ is the identity. Indeed, by definition of the isotypical component and \eqref{motinv} we see that both sides are equal to the $\widetilde{\delta}$-component of $\psi \times \mu_n$ with the product action of $\Gamma \times \mu_n$, where $\widetilde{\delta}$ is the character of $\Gamma \times \mu_n$ given by $\widetilde{\delta}(\gamma,\xi) = \delta(\gamma)\xi$.

Now by assumption we have an isomorphism of $\mu_n$-torsor $T \times^\Gamma \mu_n \cong \mu_n$ and therefore 
\[\vartheta^{\delta}_{\underline{S}}\left( \psi \times^\Gamma \underline{T} \right) = \vartheta^{\id}_{\underline{S}}\left( \psi \times^\Gamma \underline{T}\times^\Gamma \underline{\mu_n} \right) = \vartheta^{\id}_{\underline{S}}\left( \psi \times^\Gamma \underline{\mu_n} \right)=\vartheta^{\delta}_{\underline{S}}\left( \psi \right). 
\qedhere \]
\end{proof}

\section{Orbifold volumes}\label{orbivol}\label{sec3}

\subsection{Orbifold measure and stringy invariants} 
We start with a well known lemma.

\begin{lemma}\label{reduc} Consider $X\to \Spec k  \llb t \rrb$ a smooth separated $k \llb t \rrb$-scheme of finite type, purely of relative dimension $d$.
Let $\omega$ be degree $d$ differential form on $X$ which generates the $k\llb t \rrb$-module of degree $d$ differential forms everywhere.
 Let $X_k$ denote the special fiber of $X$ and
 consider
 the definable morphism
 $\underline{\ev}: \underline{X}_{\circ} \to \underline{X_k}_{{}}$
 given by reducing modulo $t$.
Then, for every  $\varphi \in \cC_\acf (\underline{X_k}_{{}})$,
we have
\[
\int_{\underline{X}_{\circ}} \underline{\ev}^* (\varphi) \, \vert \omega \vert
=
\mathbb{L}^{-d} \int_{\underline{X_k}_{{}}} \varphi 
\] 
in $\cC_\acf (\ast_{k})$.
The same statement holds with $\cC_\acf$ replaced by 
$\cC_\mot$, $\cC_\acf^{\Gamma}$ or $\cC_\mot^{\Gamma}$.
\end{lemma}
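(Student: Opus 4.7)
The strategy is to reduce to a local computation using étale coordinates on $X$, compute the motivic volume of a residue disk, and then invoke the Fubini theorem of Proposition~\ref{fubini}(2) for the reduction map $\underline{\ev}$.

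\textbf{Local reduction.} Since $X$ is smooth over $k\llb t \rrb$, we may cover it by finitely many affine Zariski opens $U_i$ admitting étale morphisms $f_i : U_i \to \mathbb{A}^d_{k\llb t \rrb}$. The subassignments $\underline{U_i}_{\circ}$ form a definable cover of $\underline{X}_{\circ}$, and by additivity of the motivic integral (applied to a refinement of the induced cover $\{\underline{U_{i,k}}_{{}}\}$ of $\underline{X_k}_{{}}$ into a finite definable partition), it suffices to treat one $U_i$ at a time. We may therefore assume that $X$ itself admits an étale morphism $f : X \to \mathbb{A}^d_{k\llb t \rrb}$. Since both $\omega$ and $f^*(dx_1 \wedge \cdots \wedge dx_d)$ generate the sheaf of top forms on $X$ everywhere, we have $\omega = u \cdot f^*(dx_1 \wedge \cdots \wedge dx_d)$ with $u \in \mathcal{O}(X)^{\times}$; as $\ord(u) = 0$ at every point of $\underline{X}_{\circ}$, the measures $|\omega|$ and $|f^*(dx_1 \wedge \cdots \wedge dx_d)|$ coincide, so we may replace $\omega$ by $f^*(dx_1 \wedge \cdots \wedge dx_d)$.

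\textbf{Fiber computation.} Apply Proposition~\ref{fubini}(2) to the definable morphism $\underline{\ev} : \underline{X}_{\circ} \to \underline{X_k}_{{}}$. The restriction of $\underline{\ev}^*(\varphi)$ to the fiber over $\bar x \in \underline{X_k}(K)$ is the constant $\varphi(\bar x)$, so Fubini reduces the claim to showing that each fiber has motivic volume $\mathbb{L}^{-d}$ with respect to $|\omega|$. By étaleness of $f$ and Hensel's lemma, the fiber $\underline{\ev}^{-1}(\bar x)$ is definably isomorphic via the coordinates $f = (x_1, \ldots, x_d)$ (centered at any lift of $f(\bar x)$) to the residue disk $(t K \llb t \rrb)^d \subset K \llb t \rrb^d$. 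The standard one-variable computation from \cite{CL-2008} gives $\int_{\ord(x) \geq 1} |dx| = \mathbb{L}^{-1}$, and iterating yields $\mathbb{L}^{-d}$ for the $d$-dimensional residue disk. The claimed identity follows.

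\textbf{Variants.} The versions with $\cC_\mot$, $\cC_\acf^{\Gamma}$, and $\cC_\mot^{\Gamma}$ require no new input: the morphisms $\vartheta_S$ and $\vartheta_S^{\delta}$ commute with motivic integration (Proposition~\ref{indrel} and its equivariant refinement from \S\ref{equicase}), and the $\Gamma$-action carried by an element of $\cC_\acf^{\Gamma}(\underline{X_k}_{{}})$ sits on an auxiliary definable family over $\underline{X_k}_{{}}$ which is transported unchanged through the reduction map and does not interact with the residue-disk computation. The only subtle point in the argument is setting up the local reduction so that partition/additivity is compatible with the fibration $\underline{\ev}$; the volume computation itself is routine.
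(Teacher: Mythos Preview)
Your proof is correct and follows essentially the same route as the paper: reduce to a fiber integral via a Fubini/projection formula, then identify each fiber integral with $\mathbb{L}^{-d}$. The paper's proof is terser---it invokes the projection formula of \cite[Proposition~13.2.1(2)]{CL-2008} directly rather than Proposition~\ref{fubini}(2), and simply asserts that the fiber integral equals $\mathbb{L}^{-d}$ from smoothness and the generator hypothesis---whereas you spell out the étale-coordinate reduction and the residue-disk computation explicitly; but the underlying argument is the same.
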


\begin{proof}
By the projection formula Proposition 13.2.1 (2) in \cite{CL-2008}, we have the equality
\[
\int_{\underline{X}_{\circ}} \underline{\ev}^* (\varphi) \, \vert \omega \vert
=
\int_{\underline{X_k}_{{}}} \varphi \cdot \psi
\]
with  $\psi$ the function
$x \mapsto \int_{\underline{\ev}^{-1}(x)} \vert \omega \vert$.
Since $X$ is smooth purely of relative dimension $d$ and $\omega$ is a 
$k \llb t \rrb$-generator everywhere, $\psi$ is constant equal to $\mathbb{L}^{-d}$.
\end{proof}

We will now extend Lemma \ref{reduc} to finite quotient singularities. For this let $M$ be a smooth variety over $k\llb t \rrb$  purely of relative dimension $d$ and $\Gamma$ a finite abelian group acting generically freely on $M$. 
We assume  that  the $\Gamma$-orbit of every point is contained  in an affine open subset, which is the case when $M$ is quasi-projective.
Let $X=M/\Gamma$ denote the geometric quotient and by $U \subset X$ the maximal open subvariety where the quotient morphism $\pi:M \to X$ is a $\Gamma$-torsor. We will always assume that $k$ contains a primitive root of unity $\xi$ of order $\vert \Gamma \vert$.

By abuse of notation we write $IX$ for the coarse moduli space of the inertia stack of $[M/\Gamma]$. More explicitly $IX$ can be identified with the disjoint union
\[  IX = \bigsqcup_{\gamma\in\Gamma} M^\gamma/\Gamma, \]
where $M^\gamma \subset M$ denotes the fixed point locus of $\gamma$. We will only be interested in the special fiber $IX_k
= \sqcup_{\gamma\in\Gamma} M^\gamma_k/\Gamma$ of $IX$. 

We define the definable subassignements $X^\natural, M^\natural \in \GDef_{\acf,k}$ by
\[ X^\natural (\overline{K}) = X(\overline{K}\llb t \rrb) \cap U(\overline{K}\llp t \rrp) \ \ \ \text{ and } \ \ \ M^\natural (\overline{K})= M(\overline{K}\llb t \rrb) \cap \pi^{-1}U(\overline{K}\llp t \rrp)  \]
for any algebraically closed field $\overline{K}$ containing $k$.

We will always assume that $M$ admits a nowhere vanishing global $d$-form $\omega$ which is invariant under the action of $\Gamma$ (in particular $\Gamma$ preserves the canonical bundle of $M$). Then $\omega$ descends to a $d$-form on $\omega_{orb}$ on $U$, which we then can integrate over $X^\natural$. To describe the resulting volume we first construct a definable specialization morphism 
\[e: X^\natural \rightarrow \underline{IX_k}_{{}}. \]

\begin{construction}[Construction of $e$]\label{econst} 
Let $x \in \vert X^\natural \vert$ and  set $K = k (x)$.
Thus $x$ gives rise to a morphism
 \[x: \Spec(K\llb t \rrb) \rightarrow  X  \] 
which maps the generic point to $U$. The fiber $T$ of $\pi:M \to X$ over $x_{|\Spec(K\llp t \rrp)}$ is by construction a $\Gamma$-torsor. In particular there exists a finite Galois extension $L/K\llp t \rrp$  and a positive integer $r$ such that 
\[T = \Spec \left(L^{\times r} \right)\]
with $L^{\times r}$ the product of $r$ copies of $L$.
We write $T_{K\llb t \rrb}$ for the spectrum of the normalization of $K\llb t \rrb$ inside $L^{\times r}$, explicitly $T_{K\llb t \rrb} = \Spec \left( K\llb t \rrb^{\times r}\right)$. We obtain a commutative diagram

\[ \xymatrix{ T \ar[r] \ar[d] & T_{K\llb t \rrb} \ar[r]^{\tilde{x}} \ar[d] & M \ar[d]^\pi \\
\Spec(K\llp t \rrp) \ar[r] & \Spec(K\llb t \rrb) \ar[r]^{\ \  \ \  x} & X,}
\]
where we used properness of $\pi$ to obtain the $\Gamma$-equivariant morphism $\tilde{x}$.

The inertia group $I\subset \Gal(L/K\llp t \rrp) \subset \Gamma$ is naturally isomorphic to a subgroup of the group of roots of unity in $K$. Our choice of primitive root of unity $\xi$ thus gives a generator $\gamma$ of $I$. By definition of the inertia group, $I$ acts trivially on the special fiber of $ T_{K\llb t \rrb} $ and thus the image of $\tilde{x}_{|\Spec(K)}$ is 
a $K$-rational point of  $M^\gamma_k$. Passing to the quotient we obtain a well-defined $K$-point in $M_k^\gamma/\Gamma$ which we denote by $e(x)$.
\end{construction}

\begin{rmk}\label{compdir} The choice of $\xi$ gives for every extension $K/k$ a splitting 
\begin{equation}  H^1(K\llp t \rrp,\Gamma) \cong H^1(K,\Gamma) \oplus \Gamma.     \end{equation}
Torsors coming from $ H^1(K,\Gamma)$ are called unramified, as they extend to $\Spec(K\llb t\rrb)$.
The class of the $\Gamma$-torsor $T$ appearing in  Construction  \ref{econst} is given by $[T_K,\gamma]$ under this splitting. 

For any $\gamma \in \Gamma$ we will denote by $T_\gamma$ the  $\Gamma$-torsor defined over $k$
\begin{equation}\label{tgamma} T_\gamma = \Spec(k \llp t^{\frac{1}{r}} \rrp) \times_{\Spec(k\llp t \rrp)}  \Gamma / \langle \gamma \rangle,\end{equation}
where $r$ denotes the order of $\gamma$ and $\gamma$ acts on $k \llp t^{\frac{1}{r}} \rrp$ through $\xi^{\frac{|\Gamma|}{r}}$.
Its  class is given by $[0,\gamma]$.  
\end{rmk}


\begin{proposition}\label{edefin}The mapping
$e: \vert X^\natural\vert \rightarrow \vert {IX_k} \vert$ is definable, that is, it is induced by a  definable morphism
$e: X^\natural \rightarrow
\underline{IX_k}_{{}}$.
\end{proposition}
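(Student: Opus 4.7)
The plan is to decompose $X^\natural$ as a definable disjoint union $\bigsqcup_{\gamma \in \Gamma} X^\natural_\gamma$, to show that each $X^\natural_\gamma$ is mapped by $e$ into $\underline{M_k^\gamma/\Gamma}_{{}}$, and then to establish definability of $e\vert_{X^\natural_\gamma}$ piece by piece. Since $\underline{IX_k}_{{}} = \bigsqcup_\gamma \underline{M_k^\gamma/\Gamma}_{{}}$ is already a definable decomposition of the target, this will be enough.

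Fix $\gamma \in \Gamma$ of order $r$ and set $\xi_r := \xi^{|\Gamma|/r} \in k$. The first step is to present the tame ramified extension $K\llb t^{1/r}\rrb$ of $K\llb t\rrb$ as $K\llb t\rrb[u]/(u^r - t)$, so that any $\tilde y \in M(K\llb t^{1/r}\rrb)$ in an affine chart of $M$ with coordinates $(y_1,\ldots,y_m)$ is encoded by a matrix $(y_{i,j})_{1 \le i \le m,\,0 \le j < r} \in K\llb t\rrb^{mr}$ via $\tilde y_i = \sum_j y_{i,j}\, u^j$. I then define $X^\natural_\gamma$ as the subassignment of those $x \in X^\natural$ for which there exist coefficients $(y_{i,j})$ satisfying: the defining equations of $M$ (reduced using $u^r = t$), so that $\tilde y$ lies in $M(K\llb t^{1/r}\rrb)$; the compatibility $\pi(\tilde y) = x$ in $X(K\llb t^{1/r}\rrb)$, which combined with the equivariance below reduces to equating the $u^0$-component of $\pi(\tilde y)$ with $x$; and the equivariance $\gamma \cdot \tilde y = \tilde y\vert_{u \mapsto \xi_r u}$, which on coefficients reads $\gamma \cdot (y_{i,j})_i = (\xi_r^j\, y_{i,j})_i$ for each $j$. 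Each of these is a system of polynomial equations over $k\llb t\rrb$ in the variables $(y_{i,j})$ and the coordinates of $x$, so the whole formula, existentially quantified over the $y_{i,j}$, is definable in the Denef--Pas language.

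To see that $X^\natural = \bigsqcup_\gamma X^\natural_\gamma$, I would combine Construction \ref{econst} with two short arguments. Properness of $\pi$ and tameness of the ramification (automatic since $K$ is algebraically closed of characteristic zero) ensure that a lift $\tilde y$ as above exists precisely when the monodromy of $x$ equals $\gamma$. Moreover, because the space of lifts over $K\llb t^{1/r}\rrb$ is a $\Gamma$-torsor, the Galois action $u \mapsto \xi_r u$ on any lift coincides with the action of the monodromy element; the coefficient-wise equivariance condition therefore pins $\gamma$ down uniquely, using that $\Gamma$ acts freely above $U$, where $\tilde y\vert_\eta$ lies by the $X^\natural$-condition. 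On each piece, $e(x)$ is by construction the class $[\tilde y(0)] \in (M_k^\gamma/\Gamma)(K)$, and this is independent of the lift since different lifts differ by a $\Gamma$-translate. Reduction $\tilde y \mapsto \tilde y(0)$ is the definable evaluation morphism $\underline{\ev}$ appearing in Lemma \ref{qfint}, and the quotient map $M_k^\gamma \to M_k^\gamma/\Gamma$ is a morphism of $k$-varieties, so composing these with the existential quantification over $(y_{i,j})$ produces a definable graph for $e\vert_{X^\natural_\gamma}$.

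The main obstacle is that the substitution $t \mapsto t^r$ is not a definable function in the Denef--Pas language, so one cannot directly express ``pull back $x$ along $t = s^r$'' in a formula. The device of writing $K\llb t^{1/r}\rrb$ as the free $K\llb t\rrb$-module with basis $1, u, \ldots, u^{r-1}$ converts the ramified base change into a system of ordinary valued-field variables, and it is precisely here that the hypothesis that $k$ contains a primitive $|\Gamma|$-th root of unity is crucial: it puts $\xi_r$ in the language and diagonalises both the $\gamma$-action on $M$ and the Galois action $u \mapsto \xi_r u$, so that the equivariance becomes a first-order system of polynomial equations on the $y_{i,j}$.
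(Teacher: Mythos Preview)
Your argument is correct and takes a genuinely different route from the paper. The paper proves definability of each piece $X^{\natural,\gamma}$ by a non-constructive compactness argument: for every point $x$ it writes down a formula asserting that $\pi^{-1}(x)$ is $\Gamma$-isomorphic to $T_\gamma$ over some finite extension of $k(x)$, encodes such extensions via minimal polynomials, and then invokes logical compactness to reduce to finitely many formulas. You instead write a single explicit formula for each $\gamma$, by presenting $K\llb t^{1/r}\rrb$ as the free $K\llb t\rrb$-module on $1,u,\dots,u^{r-1}$ and encoding a lift $\tilde y$ by its coefficient matrix $(y_{i,j})$; the equivariance condition then pins down $\gamma$ as the inertia generator (your freeness argument, using that $\Gamma$ acts freely over $U$, is exactly what forces $\ord(\gamma)=r$ and rules out overlaps between the $X^\natural_\gamma$). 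Your approach exploits the $T_{\acf}$ hypothesis more directly: since evaluation is only at algebraically closed $K$, the unramified part of $H^1(K\llp t\rrp,\Gamma)$ vanishes and no residue-field extensions are needed, whereas the paper's compactness proof carries the extra baggage of encoding such extensions. Conversely, the paper's argument would adapt more readily outside the $\acf$ setting.

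Two small points to tighten. First, your displayed formula $\gamma\cdot(y_{i,j})_i=(\xi_r^j\,y_{i,j})_i$ is literally correct only when the $\Gamma$-action on the chosen affine chart is linear; in general the left-hand side is the $u^j$-component of $G^\gamma(\tilde y)$, a polynomial in all the $y_{i',j'}$, which is still a first-order condition but not of that simple shape. Second, you should say explicitly that one takes a finite disjunction over affine charts of $M$ (and of $X$), using that $\Spec K\llb t^{1/r}\rrb$ is local so any integral lift lands in a single chart.
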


\begin{proof}It is enough  to prove that, for each $\gamma \in \Gamma$, 
the set 
$e^{-1} \vert {IX_k} \vert$ is definable, that is of the form 
$\vert X^{\natural, \gamma}\vert$ with $X^{\natural, \gamma}$ a definable subset of $X^{\natural}$.
Indeed, $X^\natural$ will be then the finite disjoint union of the sets 
$X^{\natural, \gamma}$ and the maps 
$X^{\natural, \gamma} \to \underline{M_k^\gamma/\Gamma}_{{}}$ are clearly definable.
Since a point $x$ in $X^\natural$ belongs to $e^{-1} \vert {IX_k} \vert$ if and only its fiber
$\pi^{-1} (x)$ is isomorphic, as a definable $\Gamma$-set, to $T_\gamma$ over some finite extension $L$ of the field $k(x)$,
the definability of 
$e^{-1} \vert {IX_k} \vert$ is a direct consequence of compactness
in first order logic. 
Indeed, consider a point $x$ in $e^{-1} \vert {IX_k} \vert$ and a finite extension $L$ of  $k(x)$ such that 
$\pi^{-1} (x)$ is isomorphic, as a definable $\Gamma$-set, to $T_\gamma$ over  $L$. Let $h$ be such an isomorphism.
Assume first $L = k (x)$. Working in affine charts, $h$ can be expressed by a finite family of  polynomials with coefficients in $k (x)$,
say $N$ polynomials in at most $r$ variables and degree at most  $d$, and the existence of such an isomorphism $h$ can be expressed by a formula 
$\varphi_{x, \gamma}$ in the language $\dplkx$.
It follows that there exists
a formula $\psi_{x, \gamma}$  in the language $\dplk$ which is satisfied
when one evaluates its free variables on the coordinates of $x$,
and such that if it is satisfied when evaluating the coordinates of some other point $y$, then 
there exists a definable isomorphism over $k(y)$ between $\pi^{-1} (y)$ and  $T_\gamma$.
In general, fix a positive integer $m$. One can encode field extensions $L$ of degree $m$ of a perfect field $E$
through the minimal polynomial of a generator, as explained in  Section 3.2 of 
\cite{chl}. In particular, the existence of a field extension of degree $m$ can be expressed as the existence
of an $m$-tuple $(a_1, \cdots, a_m)$ satisfying the definable condition expressing the irreducibility of the corresponding monic polynomial
$P = x^m + a_1 x^{m - 1} + \cdots a_m$
and working in the basis $(1, x, \cdots x^{m-1})$ of $L = E [x] / P$ allows to express definable conditions  involving polynomials with coefficients in $L$ as definable conditions on
tuples of elements of  $E$. In our case the 
existence of an isomorphism over $L$ may thus be rephrased as a definable condition on
tuples of elements of  $k (x)$. Thus, there exists 
a formula $\psi_{x, \gamma, m}$  in the language $\dplk$ which is satisfied
when one evaluates its free variables on the coordinates of $x$,
and such that furthermore, for any point $y$ in the given affine open,
if it is satisfied when evaluating the coordinates of $y$, then 
there exists an extension $L$ of degree $m$ of $k(y)$ and a definable isomorphism over $L$ between $\pi^{-1} (y)$ and  $T_\gamma$.
The set of all points $y$ satisfying $\psi_{x, \gamma, m}$ 
is a $\dplk$-definable subset $Z_{\psi_{x, \gamma, m}}$ 
of  $X^\natural$. By construction $X^\natural$ is covered by the set of all such definable sets $Z_{\psi_{x, \gamma, m}}$.
Thus, by compactness, $X^\natural$ is the union of finitely  many such definable sets, say $Z_{\psi_{x_i, \gamma_i, m_i}}$.
For each $\gamma$, define $X^{\natural, \gamma}$ as the union of the sets $Z_{\psi_{x_i, \gamma_i, m_i}}$ with $\gamma_i = \gamma$. It is a definable set 
and 
$\vert X^{\natural, \gamma}\vert = e^{-1} \vert {IX_k} \vert$.
\end{proof}


\begin{definition}[Weight function]\label{weight} For any point $x \in M_k^\gamma$ consider the action of $\gamma$ on the tangent space $T_xM_k$. There are unique positive integers $1 \leq c_1,\dots,c_d \leq |\Gamma|$ such that the eigenvalues of $\gamma$ are given by $\xi^{c_1},\dots, \xi^{c_d}$. We define the weight $w_x(\gamma)$ as
\[w_x(\gamma)  = \frac{1}{|\Gamma|} \sum_{i=1}^d c_i.\]
Clearly $w_x(\gamma)$ is locally constant as a function in $x$. For simplicity we will further assume that $w_x(\gamma)$ is constant on the components of $M_k^\gamma$ and write $w(\gamma)$ for its value on $\M_k^\gamma$. Notice also that by assumption $\Gamma$ preserves the canonical bundle of $M$ and hence $w(\gamma) \in \BZ$.
\end{definition}




In our case we are interested in integrating the twists $M^\natural \times^\Gamma T_{\gamma^{-1}}$ over $X^\natural$.
Here by $M^\natural \times^\Gamma T_{\gamma^{-1}}$
we mean of course
the image
of $M^\natural \times \underline{T_{\gamma^{-1}}}_{{}}$ in
$\underline{M \times^\Gamma T_{\gamma^{-1}}}$.

By construction of the morphism $e$ in \ref{econst} and Remark \ref{compdir} we see that $M^\natural \times^\Gamma T_{\gamma^{-1}}$ is supported on $X^{\natural, \gamma} :=e^{-1} (\underline{M_k^\gamma/\Gamma}_{{}}) \subset X^\natural$.

\begin{theorem}\label{equivorb} For every $\gamma \in \Gamma$, we have 
\[\int_{X^{\natural, \gamma}}^{\mathrm{mot}}  M^\natural \times^{\Gamma} T_{\gamma^{-1}} |\omega_{orb}| = \BL^{-w(\gamma)} [M_{k}^\gamma] \]
in $\cC_\mot^\Gamma(\ast_{k})$. 
In particular for any $\varrho \in \widehat{\Gamma}$,
\begin{equation}\label{isorb}\int_{X^{\natural, \gamma}}^{\mathrm{mot}, \varrho}  M^\natural \times^{\Gamma} T_{\gamma^{-1}} |\omega_{orb}| = \BL^{-w(\gamma)} \vartheta^{\varrho}_{\ast_{k}} ([M_{k}^\gamma]) \end{equation}
in $\cC_\mot(\ast_{k})$. 
\end{theorem}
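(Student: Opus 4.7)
The plan is to reduce to a linear local model by $\Gamma$-equivariant slice methods, then perform an explicit change of variables on the arc space using the ramified cover $s = t^{1/r}$, where $r$ is the order of $\gamma$. The isotypic assertion \eqref{isorb} will follow from the equality in $\cC_\mot^\Gamma(\ast_{k})$ by applying $\vartheta^{\varrho}_{\ast_{k}}$, so only the first equality need be proven.

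First, the integrand is supported on $X^{\natural,\gamma} = e^{-1}(\underline{M_k^\gamma/\Gamma}_{{}})$ (by Construction \ref{econst} and Remark \ref{compdir}), and $w(\gamma)$ is assumed constant on each component of $M_k^\gamma$. By additivity and the $\Gamma$-equivariant form of Lemma \ref{qfint}, we may work near a single $\Gamma$-orbit in $M_k^\gamma$. Using the Luna-type equivariant étale slice theorem for the stabilizer of a representative point (a subgroup of $\Gamma$ necessarily containing $\gamma$), we are reduced to the linear model $M_0 = \Spec k\llb t\rrb[y_1,\ldots,y_{d-c},x_1,\ldots,x_c]$ with $\Gamma$ diagonalizing the tangent space: $\Gamma$ acts trivially on the $y_i$ (directions tangent to $M_k^\gamma$) and by characters $\gamma\cdot x_j = \xi^{c_j} x_j$ on the transverse $x_j$. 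Writing $c_j = b_j |\Gamma|/r$ with $b_j \in \{1,\ldots,r-1\}$ yields $w(\gamma) = (d-c) + \tfrac{1}{r}\sum_j b_j$, which is an integer because $\Gamma$ preserves $\omega$.

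Next, set $s = t^{1/r}$ and $\zeta := \xi^{|\Gamma|/r}$. After the twist by $T_{\gamma^{-1}}$, a point of $X^{\natural,\gamma}$ corresponds (up to an unramified torsor class) to a $\gamma$-equivariant arc $\tilde x : \Spec K\llb s\rrb \to M_0$, namely one satisfying $\tilde x(\zeta s) = \gamma \cdot \tilde x(s)$. Solving this condition coordinate-wise forces $\tilde y_i \in K\llb t\rrb$ and $\tilde x_j = s^{b_j} u_j$ with $u_j \in K\llb t\rrb$. The role of the twist is precisely to kill the ramified part $[0,\gamma]$ in the class $[\sigma,\gamma] \in H^1(K\llp t\rrp,\Gamma)$ of the generic $\Gamma$-torsor (Remark \ref{compdir}), leaving only an unramified torsor whose effect on the $\vartheta^{\varrho}$-image is absorbed by Lemma \ref{twnotw}. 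Reducing modulo $s$ projects this equivariant arc space onto $\underline{M_{0,k}^\gamma}_{{}}$, giving the $\Gamma$-equivariant fibration used in the final computation.

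Finally, under this change of variables the integral of $|\omega_{orb}|$ over $X^{\natural,\gamma}$ becomes an integral of $|dy_1\wedge\cdots\wedge dy_{d-c}| \cdot \prod_j |d(s^{b_j}u_j)|$ on the equivariant arc space. The $y$-factor yields $\mathbb{L}^{-(d-c)}[\underline{M_{0,k}^\gamma}_{{}}]$ by Lemma \ref{reduc} applied to the constant $k\llb t\rrb$-scheme $M_{0,k}^\gamma\times_k \Spec k\llb t\rrb$, while each transverse factor contributes a Jacobian $\mathbb{L}^{-b_j/r}$ extracted from the leading term $s^{b_j} du_j$. These a priori fractional factors combine to the integer power $\mathbb{L}^{-(w(\gamma)-(d-c))}$; multiplying and gluing the local contributions produces the claimed $\mathbb{L}^{-w(\gamma)}[M_k^\gamma]$, with the natural $\Gamma$-action on $[M_k^\gamma]$ transported from the $\Gamma$-action on the equivariant arc space. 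The main obstacle will be to rigorously realize the ramified change of variables $(u_j)\mapsto (s^{b_j} u_j)$ within the Cluckers–Loeser framework (whose valuation exponents live in $\BZ$ but whose intermediate computations here pass through $\tfrac{1}{r}\BZ$), and to verify that the bookkeeping of unramified twists and the $\Gamma$-equivariance remain compatible with the isotypical decomposition via $\vartheta^\varrho$.
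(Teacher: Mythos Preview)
Your overall strategy coincides with the paper's: reduce to a linear model via an equivariant \'etale slice, then use the substitution $x_j \mapsto t^{c_j/r} x_j$ to extract the factor $\mathbb{L}^{-w(\gamma)}$. The paper organizes this as a chain of three propositions --- linear model at the origin, then a global $\Gamma$-fixed point via \cite[Lemma 4.14]{gwz}, then an arbitrary point of $M_k^\gamma/\Gamma$ via the stabilizer --- and finishes by a pointwise check on $\underline{M_k^\gamma/\Gamma}_{{}}$.

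There are two genuine gaps in your execution that the paper fills. First, the fractional Jacobians $\mathbb{L}^{-b_j/r}$ you write down do not literally exist in the Cluckers--Loeser framework, and you flag this yourself. The paper avoids the problem entirely: rather than integrating over $\gamma$-equivariant arcs in $K\llb s\rrb$, it shows (citing \cite[2.3]{DL2002}) that the substitution assembles into a $k\llp t\rrp$-isomorphism $\lambda_\gamma : \mathbb{A}^d_{k\llp t\rrp} \to \mathbb{A}^d_{k\llp t\rrp} \times^\Gamma T_{\gamma^{-1}}$ satisfying $\bar\lambda_\gamma^*\omega_{orb} = t^{w(\gamma)}\omega_{orb}$. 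Everything then stays over $k\llp t\rrp$ and only the integer exponent $w(\gamma)$ appears.

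Second, and more seriously, you do not explain why the transverse contribution carries trivial $\Gamma$-action in $\cC_\mot^\Gamma(\ast_k)$. Your claim that ``$\Gamma$ acts trivially on the $y_i$'' is false --- only $\gamma$ does --- and the full stabilizer acts nontrivially on the $x_j$. After the change of variables $\lambda_\gamma$, one is left with the integral $\int_{Y^\natural}^{\mathrm{mot}} M^\natural |\omega_{orb}|$, and the assertion that this equals $1$ \emph{with trivial $\Gamma$-action} is not a formal Jacobian computation: the paper obtains it by restricting to the torus $M_k^\circ \cong \mathbb{G}_{m,k}^d$ and invoking Lemma~\ref{isomot} for the isogeny $\mathbb{G}_{m,k}^d \to \mathbb{G}_{m,k}^d/\Gamma$, which forces all nontrivial isotypical components of $[M_k^\circ]$ to vanish. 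Without this motivic input there is no reason your transverse product $\prod_j \mathbb{L}^{-b_j/r}$, even if it made sense, would land in the trivial isotypical component, and the equality in $\cC_\mot^\Gamma(\ast_k)$ would not follow.
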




\subsection{Proof of Theorem \ref{equivorb}}

We start by proving a local version of Theorem \ref{equivorb}.

\begin{proposition}\label{linearaffine} Assume $\Gamma$ acts linearly on $M = \BA^d_{k\llb t \rrb}$ and fix $\gamma \in \Gamma$. Let $0$ denote the image of the origin in $\left( \BA_k^d\right)^\gamma / \Gamma \subset IM_k$. We have the relation
\[  \int^{\mathrm{mot}}_{e^{-1}(0)} M^\natural\times^\Gamma T_{\gamma^{-1}} |\omega_{orb}| =   \BL^{-w(\gamma)} \] 
in $\cC^\Gamma_\mot(\ast_{k})$, where $\BL$ is considered with the trivial $\Gamma$-action.
Moreover, 
for any $\delta \in \widehat{\Gamma}$ which is non-trivial,
\[ \int^{\mathrm{mot}, \delta}_{e^{-1}(0)} M^\natural\times^\Gamma T_{\gamma^{-1}} |\omega_{orb}| = 0.\] 
\end{proposition}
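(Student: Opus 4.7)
The strategy is to trivialize the torsor $T_{\gamma^{-1}}$ by the ramified base change $\Spec k\llb s \rrb \to \Spec k\llb t \rrb$ with $s^r = t$ (where $r = \ord \gamma$), parametrize the support of $M^\natural \times^\Gamma T_{\gamma^{-1}}$ over $e^{-1}(0)$ as an arc space of $\BA^d$, and evaluate the integral by motivic change of variables.

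Since $\Gamma$ acts linearly on $M = \BA^d$, I would first choose coordinates $x_1,\ldots,x_d$ diagonalizing $\gamma$, with $\gamma \cdot x_i = \xi^{c_i} x_i$ for $1 \le c_i \le |\Gamma|$, and take $\omega = dx_1 \wedge \cdots \wedge dx_d$. Setting $c'_i := c_i r/|\Gamma| \in \{1,\ldots,r\}$, one has $\sum_i c'_i = r\,w(\gamma)$. Following Construction \ref{econst} together with the description of $T_{\gamma^{-1}}$ in Remark \ref{compdir}, a point of $M^\natural \times^\Gamma T_{\gamma^{-1}}$ lying over $e^{-1}(0)$ is represented, after pullback to $\Spec \overline K\llb s \rrb$, by a $\langle \gamma \rangle$-equivariant arc $\phi:\Spec \overline K\llb s\rrb \to \BA^d$ with $\phi(0) = 0$ satisfying $\phi_i(\xi^{|\Gamma|/r}s) = \xi^{c_i}\phi_i(s)$. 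Equivariance then forces $\phi_i(s) = s^{c'_i}\psi_i(s^r)$ for uniquely determined $\psi_i \in \overline K\llb t\rrb$, and conversely any such $\psi$ produces an admissible $\phi$; this identifies the support with the arc space $\underline{\BA^d}_{\circ}$ in the $\psi$-coordinates.

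I would then apply the motivic change-of-variables formula to the substitution $\psi \mapsto \phi$: its Jacobian equals $\prod_i s^{c'_i} = s^{rw(\gamma)} = t^{w(\gamma)}$, so the canonical measure $|\omega|$ on $\phi$-arcs (which agrees with the pullback of $|\omega_{orb}|$ via $\pi^*\omega_{orb} = \omega$ on the unramified locus) pulls back to $|t|^{w(\gamma)} \cdot |d\psi_1 \wedge \cdots \wedge d\psi_d|$ on the $\psi$-arcs. Combining this with Lemma \ref{reduc} applied to the arc space of $\BA^d_{k\llb t \rrb}$ gives
\[
\int_{e^{-1}(0)}^{\mathrm{mot}} M^\natural \times^\Gamma T_{\gamma^{-1}} \,|\omega_{orb}| \;=\; \BL^{-w(\gamma)} \int_{\underline{\BA^d}_{\circ}} |d\psi_1 \wedge \cdots \wedge d\psi_d| \;=\; \BL^{-w(\gamma)}\cdot\BL^{-d}\cdot[\BA^d_k] \;=\; \BL^{-w(\gamma)},
\]
where $\BL$ carries the trivial $\Gamma$-action in $\cC^\Gamma_\mot(\ast_{k})$. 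The triviality of the residual $\Gamma$-action (coming from the second copy of $\Gamma$ acting on $T_{\gamma^{-1}}$) follows from Lemma \ref{twnotw} once one observes that the class of $T_\gamma \times^\Gamma T_{\gamma^{-1}}$ is trivial in $H^1(k\llp t\rrp,\Gamma)$. The second statement of the proposition is then automatic: since the first equality places the integral in the trivial isotypical component, every non-trivial $\delta$-isotypical component vanishes.

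The main obstacle I expect is the rigorous execution of the change-of-variables step within the framework of \cite{CL-2008}: verifying that the identification of the support with $\underline{\BA^d}_{\circ}$ is a well-defined isomorphism in $\GDef_{\acf,k}$, tracking the motivic measure carefully through the ramified base change $s^r = t$ so that the Jacobian factor $|t|^{w(\gamma)}$ emerges correctly, and confirming the triviality of the residual $\Gamma$-action so that the equation indeed lives in the stated isotypical component.
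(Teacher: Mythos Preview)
Your change-of-variables substitution $\phi_i(s) = s^{c'_i}\psi_i(s^r)$ is exactly the paper's isomorphism $\lambda_\gamma$, and the Jacobian factor $t^{w(\gamma)}$ is computed the same way. Where your argument diverges, and where the gap lies, is in the treatment of the residual $\Gamma$-action.

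Under your parametrization the residual $\Gamma$-action on $M^\natural \times^\Gamma T_{\gamma^{-1}}$ (which, as you note, comes from the second copy of $\Gamma$ on $T_{\gamma^{-1}}$, equivalently from $\Gamma$ acting on $M^\natural$) transports to the \emph{linear} $\Gamma$-action on the $\psi$-coordinates. So after the change of variables you are left with $\BL^{-w(\gamma)}\cdot\BL^{-d}\cdot[\BA^d_k]$ where $[\BA^d_k]$ carries this linear action, and the whole content of the proposition is that this equals $\BL^{-w(\gamma)}$ with \emph{trivial} action in $\cC^\Gamma_\mot(\ast_k)$. Your appeal to Lemma~\ref{twnotw} and the triviality of $T_\gamma \times^\Gamma T_{\gamma^{-1}}$ does not establish this: Lemma~\ref{twnotw} compares $\vartheta^\delta(\psi \times^\Gamma T)$ with $\vartheta^\delta(\psi)$ only when $\delta_*T$ is trivial, and for $T = T_{\gamma^{-1}}$ this fails for most $\delta$; moreover the triviality of $T_\gamma \times^\Gamma T_{\gamma^{-1}}$ as a $\Gamma$-torsor tells you the fibers of your function are copies of the regular $\Gamma$-set, which has \emph{all} isotypical components nonzero, not just the trivial one.

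The paper handles exactly this point differently: after the same change of variables it reduces to showing $\int_{Y^\natural}^{\mathrm{mot}} M^\natural |\omega_{orb}| = 1$, restricts to the open torus $M^\circ_k = \G_{m,k}^d \subset \BA^d_k$ (where the action is free), and then invokes Lemma~\ref{isomot} for the isogeny $\G_m^d \to \G_m^d/\Gamma$ to conclude that $[M^\circ_k] = (\BL-1)^d$ with trivial $\Gamma$-action in $\cC^\Gamma_\mot(\ast_k)$. That lemma (ultimately Lemma~\ref{isoge} on motives of isogenous groups) is the genuine input that kills the nontrivial isotypical components; your argument needs an analogue of it, or a direct appeal to $\BA^1$-homotopy invariance in $\DAC$ to see that the compactly supported motive of $\BA^d_k$ with any linear $\Gamma$-action is $\BL^d$.
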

\begin{proof} Let $I \subset \Gamma$ be the cyclic group generated by $\gamma$ and $r=|I|$. We may assume that $\Gamma$ acts diagonally on $\BA^d_{k\llb t \rrb}$ 
 and that $\gamma$ has eigenvalues  $\xi^{c_1},\dots,\xi^{c_d}$ for unique integers $1 \leq c_i \leq d$. We then use \eqref{tgamma} to get the description
 \[\BA^d_{k \llp t \rrp} \times^{\Gamma} T_{\gamma^{-1}} \cong \Spec\left(k \llp t^{1/r} \rrp [x_1,\dots,x_n]^I\right),\]
where $\gamma$ acts on $k \llp t^{1/r} \rrp$ by $\gamma \cdot t^{1/r} = \xi t^{1/r}$. We then define a $k\llp t \rrp$-morphism 
 \[\lambda_\gamma: \BA^d_{k \llp t \rrp} \to \BA^d_{k \llp t \rrp} \times^{\Gamma} T_{\gamma^{-1}}\]
 on the level of coordinate rings by
\[ f(x_d,\dots,x_d) \mapsto f(t^\frac{c_1}{r}x_1,\dots,t^\frac{c_n}{r}x_n). \]
As in \cite[2.3]{DL2002} one sees that $\lambda_\gamma$ is indeed defined over $k\llp t \rrp$, and since $t$ is invertible it is in fact an isomorphism.  We thus obtain a cartesian square
\[ \xymatrix{\BA^d_{k \llp t \rrp} \ar[r]^{\lambda_\gamma \ \ \ \ \ } \ar[d] & \BA^d_{k \llp t \rrp} \times^{\Gamma} T_{\gamma^{-1}} \ar[d] \\
\BA^d_{k \llp t \rrp}/\Gamma \ar[r]^{\bar{\lambda}_\gamma } & \BA^d_{k \llp t \rrp}/ \Gamma.}\]

Now let $Y$ be the image of $\underline{M}$ in $\underline{M/\Gamma}$. It is a definable subassignment of $\underline{M/\Gamma}$.
Set $Y^\natural = Y \cap (\BA^d/\Gamma)^\natural$.  Then by \cite[2.7]{DL2002} we have a definable bijection 
\[\bar{\lambda}_\gamma: Y^\natural \rightarrow e^{-1}(0),    \]
and hence
\[   \int^{\mathrm{mot}}_{e^{-1}(0)} M^\natural \times^\Gamma T_{\gamma^{-1}} |\omega_{orb}| = \int^{\mathrm{mot}}_{Y^\natural} M^\natural |\bar{\lambda}_\gamma^*\omega_{orb}|.  \]
By definition of $\bar{\lambda}_\gamma$ we have 
\[\bar{\lambda}_\gamma^*\omega_{orb} = t^{\sum_i \frac{c_i}{r}}\omega_{orb} = t^{w(\gamma)} \omega_{orb}.\]
Hence 
\[  \int^{\mathrm{mot}}_{Y^\natural} M^\natural |\bar{\lambda}_\gamma^*\omega_{orb}| =  \int^{\mathrm{mot}}_{Y^\natural} M^\natural |t^{w(\gamma)} \omega_{orb}| = \BL^{-w(\gamma)}  \int^{\mathrm{mot}}_{Y^\natural} M^\natural |\omega_{orb}|. \]
So it remains to prove that 
\[\int^{\mathrm{mot}}_{Y^\natural} M^\natural |\omega_{orb}| = 1.\]

Let $M^{\circ}_k$ be the complement in $M_k$ of the coordinate hyperplanes.
Consider the morphism $\ac : \underline{M/\Gamma} \to \underline{M_k / \Gamma}_{{}}$.
We set $Y^\sharp = Y^\natural \cap (\ac^{-1} ( \underline{M^{\circ}_k / \Gamma}_{{}}))$ and  $M^\sharp = M^\natural \cap (\ac^{-1} (\underline{M^{\circ}_k}_{{}}))$.
Since $\ac^{-1}(\underline{M_k}_{{}} \setminus \underline{M^{\circ}_k}_{{}})$ is of dimension $<d$,
\[\int^{\mathrm{mot}}_{Y^\natural} M^\natural |\omega_{orb}|
=
\int^{\mathrm{mot}}_{Y^\sharp} M^\sharp |\omega_{orb}|.\]
Note that $M^\sharp$ is definably isomorphic to
$\{(y, x) \in Y^\sharp \times \underline{M_k}_{{}} \ | \ \ac (y) = p (x)\}$,
with $p$ the projection $M^{\circ}_k \to M^{\circ}_k / \Gamma$.
Indeed, any point $z$ in  $M^\sharp$ is uniquely determined by its image in
$Y^\sharp$ and $\ac(z)$.

Thus, if $\varphi$ denotes the class of $p: M^{\circ}_k \to M^{\circ}_k / \Gamma$
in $\cC_\mot^\Gamma( \underline{M^{\circ}_k / \Gamma})$,
\[\int^{\mathrm{mot}}_{Y^\sharp} M^\sharp |\omega_{orb}|
=
\int^{\mathrm{mot}}_{Y^\sharp} \ac^* (\varphi) |\omega_{orb}|
.\]

Now let $y \in \ac (Y^\sharp)$ and
$x \in M^{\circ}_k$ with $p (x) = y$.
We have
\[
\int^{\mathrm{mot}}_{\ac^{-1} (y) \cap Y^\sharp}  |\omega_{orb}|
=
\int^{\mathrm{mot}}_{\ac^{-1} (x) \cap M^\sharp}  |\omega_{M}|
=
\int^{\mathrm{mot}}_{\ac^{-1} (x)}  |\omega_{M}| = (\mathbb{L} - 1)^{-d}.\]

By Proposition \ref{eqmotfubini} (Fubini), we deduce that
\[
\int^{\mathrm{mot}}_{Y^\sharp} \ac^* (\varphi) |\omega_{orb}|
=
(\mathbb{L} - 1)^{-d} \int^{\mathrm{mot}}_{\underline{M^{\circ}_k / \Gamma}_{{}}} [M^{\circ}_k]
.\]
So, to conclude it is enough to verify that
the class of $M^{\circ}_k$  in $\cC^\Gamma_\mot(\ast_{k})$
is equal  to $(\mathbb{L} - 1)^{d}$, with trivial $\Gamma$-action.
This follows from
Lemma \ref{isomot}
for the isogeny
$M^{\circ}_k \to M^{\circ}_k / \Gamma$, since $M^{\circ}_k \cong \mathbb{G}_{m,k}^d$.
\end{proof}



Next we go back to the situation of Theorem \ref{equivorb} but considering only a global fixed point.

\begin{proposition}\label{globalcyclic} Let $x\in  |M_k|$ be fixed by $\Gamma$. Then for any $\gamma \in \Gamma$ we have
\[   \int^{\mathrm{mot}}_{e^{-1}(x)} M^\natural \times^\Gamma T_{\gamma^{-1}} |\omega_{orb}| =   \BL^{-w(\gamma)} \in \cC_\mot^\Gamma(\ast_{k(x)}).  \]
In particular, 
for any $\delta \in \widehat{\Gamma}$ which is non-trivial,
\[ \int^{\mathrm{mot}, \delta}_{e^{-1}(x)} M^\natural\times^\Gamma T_{\gamma^{-1}} |\omega_{orb}| = 0.\] 
\end{proposition}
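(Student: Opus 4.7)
The plan is to reduce the statement to the linear case already handled in Proposition \ref{linearaffine} by choosing a $\Gamma$-equivariant étale linearization of $M$ at the fixed point $x$. After base change to $k(x)$ we may assume $x$ is $k$-rational. Since $x$ is a smooth point of $M$ fixed by all of $\Gamma$, the finite group $\Gamma$ acts linearly on $T_xM_k$, and because $\Gamma$ preserves the $\Gamma$-invariant form $\omega$, this action is compatible with a $\Gamma$-invariant volume form on $T_xM_k$.

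I would first construct, after possibly shrinking to a $\Gamma$-stable affine neighborhood $U$ of $x$ in $M$, a $\Gamma$-equivariant étale morphism
\[ \phi : U \longrightarrow \mathbb{A}^d_{k\llb t\rrb}, \qquad \phi(x) = 0, \]
where $\Gamma$ acts linearly on $\mathbb{A}^d_{k\llb t\rrb}$ via the tangent representation at $x$. This is a standard averaging argument in characteristic zero (Luna's slice theorem in its simplest form): pick a $\Gamma$-equivariant lift $T_xM_k \hookrightarrow \mathfrak{m}_x/\mathfrak{m}_x^2$, average over $\Gamma$ a choice of representatives $y_1,\ldots,y_d \in \mathcal{O}_U$, and observe that $(y_1,\ldots,y_d)$ is a regular system of parameters at $x$, so defines an étale morphism to $\mathbb{A}^d_{k\llb t\rrb}$ after further restriction. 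One can moreover arrange that $\phi^{*}(dy_1\wedge\cdots\wedge dy_d)$ and $\omega$ differ by a $\Gamma$-invariant unit, so that the associated absolute values $|\omega|$ and $|\phi^{*}\omega_{\mathbb{A}^d}|$ agree as motivic measures.

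The next step is to observe that the specialization map $e$ from Construction \ref{econst} is étale-local. For $y \in (U/\Gamma)^\natural$ the $\Gamma$-torsor $\pi^{-1}(y)$ over $K\llp t\rrp$ is the pullback under the induced morphism $\bar\phi : U/\Gamma \to \mathbb{A}^d/\Gamma$ of the corresponding torsor at $\bar\phi(y)$, so the inertia generator is the same and $e(y)=x$ if and only if $e(\bar\phi(y))=0$. Thus $\bar\phi$ induces a definable bijection between $e^{-1}(x)$ and the corresponding $e^{-1}(0)\subset(\mathbb{A}^d/\Gamma)^\natural$, and the integrands $M^\natural\times^\Gamma T_{\gamma^{-1}}$ correspond under pullback via $\bar\phi$. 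Applying the change of variables formula together with the equality of measures $|\omega|=|\phi^{*}\omega_{\mathbb{A}^d}|$ (an instance of Lemma \ref{qfint} in disguise) reduces the integral on $e^{-1}(x)\subset X^\natural$ to the one computed in Proposition \ref{linearaffine}, which yields $\BL^{-w(\gamma)}$ in $\cC^\Gamma_\mot(\ast_{k(x)})$ and the vanishing of the $\delta$-isotypical component for every non-trivial character $\delta$, since the weight $w(\gamma)$ computed from the tangent representation of $\gamma$ at $x$ matches the one used in the linear case by construction.

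The main obstacle is the equivariant linearization $\phi$ over $k\llb t\rrb$ with matching volume forms. Both ingredients are standard (averaging in characteristic zero, henselianity of $k\llb t\rrb$ for lifting étale morphisms from the special fiber), but combining them so that everything is genuinely $\Gamma$-equivariant and respects the chosen $\omega$ up to a unit requires some care; once this is granted, the rest of the proof is essentially a change of variables followed by a direct appeal to Proposition \ref{linearaffine}.
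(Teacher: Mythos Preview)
Your proposal is correct and follows essentially the same approach as the paper: reduce to the linear case of Proposition \ref{linearaffine} via a $\Gamma$-equivariant \'etale linearization at the fixed point, observe that this identifies $e^{-1}(x)$ with $e^{-1}(0)$ definably, and use that \'etale morphisms preserve the orbifold volume form in absolute value. The paper invokes \cite[Lemma 4.14]{gwz} for the equivariant \'etale chart where you sketch the averaging construction directly, but the arguments are otherwise the same.
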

\begin{proof}After base change we may assume $x$ is a $k$-rational point.
By \cite[Lemma 4.14]{gwz} there exists a $\Gamma$-invariant open neighbourhood $U \subset M$ of $x$ and an \'etale morphism 
\[f:U \rightarrow \BA^d_{k\llb t \rrb},\]
which is $\Gamma$-invariant for some linear action of $\Gamma$ on $\BA^d_{k\llb t \rrb}$ and sends $x$ to the origin (in \textit{loc. cit.} the lemma is only proven for non-archimedean local fields but the proof applies to any Henselian ring). Furthermore $f$ induces a definable bijection
\[ \bar{f}: e^{-1}(x) \rightarrow e^{-1}_{\BA^d_{k\llb t \rrb}}(0).  \]
Since $f$ is \'etale we have $|f^* \omega_{orb,\BA^n_{k\llb t \rrb}}| = |\omega_{orb,M}|$ and thus the proposition follows from Proposition \ref{linearaffine}.
\end{proof}

Finally we can prove the general point-wise statement.

\begin{proposition}\label{pointwise} For any point $x \in |M_k^\gamma/\Gamma|$ we have 
\[    \int^{\mathrm{mot}}_{e^{-1}(x)} M^\natural \times^\Gamma T_{\gamma^{-1}} |\omega_{orb}| =   \BL^{-w(\gamma)}[\pi^{-1}(x)]  \]
in $\cC_\mot^\Gamma(\ast_{k (x)})$,   
where $\pi: M^\gamma_k \rightarrow M^\gamma_k/\Gamma$ denotes the quotient morphism.
In particular, for any $\delta \in \widehat{\Gamma}$,
\[    \int^{\mathrm{mot}, \delta }_{e^{-1}(x)} M^\natural \times^\Gamma T_{\gamma^{-1}} |\omega_{orb}| =   \BL^{-w(\gamma)}\vartheta_{\ast_{k (x)}}^{\delta} ([\pi^{-1}(x)])  \]
in $\cC_\mot(\ast_{k (x)})$. 
\end{proposition}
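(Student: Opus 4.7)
The plan is to reduce the statement to the fixed-point case already established in Proposition \ref{globalcyclic} by combining an equivariant étale slice argument with the induction operation $\Gamma \times^{\Gamma_y}(-)$. The point $x$ is not necessarily $\Gamma$-fixed in $M^\gamma_k$, but its $\Gamma$-orbit in any lift is: by slicing $M$ along a chosen lift of $x$ one trades the $\Gamma$-action on the full orbit for a $\Gamma_y$-action at a single fixed point.

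After base change to $k(x)$, we may assume $x$ is rational. Fix a geometric lift $y \in M^\gamma_k$ of $x$ and let $\Gamma_y \subset \Gamma$ denote its stabilizer. Since $y$ is fixed by $\gamma$ we have $\gamma \in \Gamma_y$, and as a $\Gamma$-scheme $\pi^{-1}(x) \cong \Gamma \times^{\Gamma_y} \Spec k(y)$. An equivariant étale slice argument (in the spirit of \cite[Lemma 4.14]{gwz} combined with Luna's slice theorem) produces a $\Gamma$-invariant étale neighborhood $U \subset M$ of $\pi^{-1}(x)$ with a $\Gamma$-equivariant isomorphism $U \cong \Gamma \times^{\Gamma_y} V$, where $V \subset M$ is a $\Gamma_y$-invariant étale neighborhood of $y$ admitting a $\Gamma_y$-equivariant étale map to a $\Gamma_y$-linear affine space over $k(y)\llb t \rrb$. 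In particular $U/\Gamma \cong V/\Gamma_y$, and via the construction of $e$ the fiber $e^{-1}(x)$ is identified with a disjoint union $\bigsqcup_{y' \in \pi^{-1}(x)} e_V^{-1}(y')$ on which $\Gamma$ acts by permuting the components according to its action on the orbit.

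Since arcs specializing to $x$ lift (after a ramified cover) to arcs in $M$ specializing into $\pi^{-1}(x)$, they lie entirely in the preimage of $U/\Gamma$, so replacing $M$ by $U$ does not change the integral. Under the identification $U \cong \Gamma \times^{\Gamma_y} V$ the definable function $U^\natural \times^\Gamma T_{\gamma^{-1}}$ becomes the image under induction from $\Gamma_y$ to $\Gamma$ of $V^\natural \times^{\Gamma_y} T_{\gamma^{-1}}$. Applying Proposition \ref{globalcyclic} to the $\Gamma_y$-action on $V$ at the fixed point $y$ yields
\[ \int^{\mathrm{mot}}_{e_V^{-1}(y)} V^\natural \times^{\Gamma_y} T_{\gamma^{-1}} |\omega_{orb,V}| = \BL^{-w(\gamma)} \]
in $\cC^{\Gamma_y}_\mot(\ast_{k(y)})$. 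Inducing from $\Gamma_y$ up to $\Gamma$ (equivalently, summing equivariantly over the orbit) multiplies the right-hand side by the class $[\Gamma \times^{\Gamma_y} \Spec k(y)] = [\pi^{-1}(x)]$ in $\cC^\Gamma_\mot(\ast_{k(x)})$, producing the desired equality. The isotypical statement follows by applying $\vartheta^\delta_{\ast_{k(x)}}$ to both sides and using the definitions.

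The main obstacle is bookkeeping the $\Gamma$-equivariance through the slice decomposition: one must verify that the induced $\Gamma$-action on the disjoint union of local contributions coincides with the $\Gamma$-action on $\pi^{-1}(x)$, and that the twist $T_{\gamma^{-1}}$, which a priori is a $\Gamma$-torsor, restricts correctly to a $\Gamma_y$-torsor of the same description (this is guaranteed by $\gamma \in \Gamma_y$ and the explicit formula \eqref{tgamma}). Once this verification is in place, the computation reduces to Proposition \ref{globalcyclic} as above.
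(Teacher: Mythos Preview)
Your strategy is essentially the paper's: pass from $\Gamma$ to the stabilizer $\Gamma_y$ of a lift, invoke Proposition \ref{globalcyclic} at the resulting fixed point, and then induce back up, picking up the factor $[\pi^{-1}(x)]$. The paper packages this slightly differently: rather than invoking a Luna-type slice $U \cong \Gamma \times^{\Gamma_y} V$, it works directly with the intermediate quotient $M^\natural/\Gamma'$ (with $\Gamma'=\Gamma_y$), shows that the natural map $\sigma\colon e_{\Gamma'}^{-1}(\tilde x)\to e^{-1}(x)$ is an isomorphism, and identifies the pulled-back integrand as $\bigl(M^\natural \times^\Gamma T_{\gamma^{-1}}\bigr)\times^{\Gamma'}\Gamma$, so that Lemma \ref{unrtw} and Proposition \ref{globalcyclic} give $\BL^{-w(\gamma)}[\Gamma/\Gamma']$. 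Your ``induction from $\Gamma_y$ to $\Gamma$'' is precisely this $\times^{\Gamma'}\Gamma$ step.

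The one place where the paper is more careful is when $x$ does not lift to a $k$-rational point of $M_k^\gamma$. You pass to a geometric lift $y$ and silently work over $k(y)$, asserting that induction lands back in $\cC^\Gamma_\mot(\ast_{k(x)})$; but the slice $\Gamma\times^{\Gamma_y}V$ need not be defined over $k(x)$ without further argument. The paper resolves this cleanly: it chooses an unramified $\Gamma$-torsor $T$ over $\Spec k$ such that $x$ lifts to a $k$-point of $M^\gamma\times^\Gamma T$, uses Lemma \ref{unrtw} to exchange the twist by $T$ with the integral, runs the previous argument for $M^\natural\times^\Gamma T$, and then untwists, recovering $\pi^{-1}(x)\cong (\Gamma/\mathrm{Stab}(\tilde x))\times^\Gamma T^{-1}$. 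Your sketch would be complete once you make this descent step explicit; as written, the passage from $k(y)$ back to $k(x)$ is the missing bookkeeping.
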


\begin{proof}After base change we may assume $x$ is a $k$-rational point. Let $\Gamma' \subset \Gamma$ denote the stabilizer group of $x$. We denote by $e_{\Gamma'},\omega_{orb,\Gamma'}$ the specialization morphism and orbifold form for $M/\Gamma'$. We have a commutative diagram

\begin{equation}\label{ecom}\xymatrix{ M^\natural/\Gamma' \ar[r]^<<<<{e_{\Gamma'}} \ar[d] &  \bigsqcup_{\delta \in \Gamma'} \underline{M_k^\delta/\Gamma' }_{{}} \ar[d] \\
M^\natural/\Gamma \ar[r]^<<<<<e & \bigsqcup_{\delta \in \Gamma} \underline{M_k^\delta/\Gamma}_{{}},}\end{equation}
where the vertical arrows are quotient morphisms.

We assume first that $x$ lifts to a $k$-rational point $\tilde{x} \in M_k^\gamma$, in which case we have $\pi^{-1}(x) \cong \Gamma/\Gamma'$. Then we have an identification
\[ e^{-1}_{\Gamma'}(\pi^{-1}(x)) = \bigsqcup_{x'\in\pi^{-1}(x)} e^{-1}_{\Gamma'}(x'), \]
and the free $\Gamma/\Gamma'$ action on $e^{-1}_{\Gamma'}(\pi^{-1}(x))$ simply permutes the component on the right hand side. Therefore we have an injective morphism
\[\sigma: e^{-1}_{\Gamma'}(\tilde{x}) \rightarrow \bigsqcup_{x'\in\pi^{-1}(x)} e^{-1}_{\Gamma'}(x') \rightarrow e^{-1}(x),\]
 which is in fact an isomorphism. Indeed for any algebraically closed field extension $K/k$, any $K$-point $y: \Spec(K\llp t\rrp) \rightarrow M/\Gamma$ in $e^{-1}(x)$ lifts by definition of a $\Gamma$-equivariant morphism $T_\gamma \rightarrow M$. As $\gamma \in \Gamma'$ the quotient $T_\gamma /\Gamma'$ is the trivial $\Gamma/\Gamma'$-torsor, which gives a lift of $y$ to $e^{-1}_{\Gamma'}(\tilde{x})$.

From the isomorphism $M^\natural \times_{M^\natural/\Gamma} M^\natural \cong M^\natural \times \Gamma$ we see that 

\[\left(M^\natural \times^\Gamma T_{\gamma^{-1}}\right)\times_{M^\natural/\Gamma} M^\natural/\Gamma' \cong \left(M^\natural \times^\Gamma T_{\gamma^{-1}}\right)\times^{\Gamma'} \Gamma .\]

Hence we have

\begin{align*}\int^{\mathrm{mot}}_{e^{-1}(x)} M^\natural \times^\Gamma T_{\gamma^{-1}} |\omega_{orb}| &= \int^{\mathrm{mot}}_{e_{\Gamma'}^{-1}(\tilde{x})} \sigma^*\left(M^\natural \times^\Gamma T_{\gamma^{-1}}\right) |\sigma^*\omega_{orb}|\\
&= \int^{\mathrm{mot}}_{e_{\Gamma'}^{-1}(\tilde{x})} \left(M^\natural \times^\Gamma T_{\gamma^{-1}}\right)\times^{\Gamma'} \Gamma |\omega_{orb,\Gamma'}|=\BL^{-w(\gamma)}[\Gamma/\Gamma']. \end{align*}
For the last equation we used Lemma \ref{unrtw} and Proposition \ref{globalcyclic}.

If $x \in M^\gamma/\Gamma(k)$ does not lift of a $k$-point of $M^\gamma$ we can find a $\Gamma$-torsor $T$ over $\Spec(k)$, such that $x$ lifts to a $k$-point $\tilde{x}$ in $M^\gamma \times^\Gamma T$, in which case we have $\pi^{-1}(x) \cong \Gamma/\Stab(\tilde{x}) \times^\Gamma T^{-1}$. We can consider $T$ as an unramified $k\llb t \rrb$-torsor and use Lemma \ref{unrtw} to write
\[ \int^{\mathrm{mot}}_{e^{-1}(x)} M^\natural \times^\Gamma T_{\gamma^{-1}} |\omega_{orb}| = \left(\int^{\mathrm{mot}}_{e^{-1}(x)} (M^\natural \times^\Gamma T) \times^\Gamma T_{\gamma^{-1}} |\omega_{orb}|\right) \times^\Gamma T^{-1}. \]
Repeating the previous argument with $M^\natural$ replaced by $M^\natural \times^\Gamma T$ then proves the proposition.
\end{proof}

\begin{proof}[Proof of Theorem \ref{equivorb}] We can see the equality 
\[\int_{e^{-1}(\gamma)} M^\natural \times^{\Gamma} T_{\gamma^{-1}} |\omega_{orb}| = \BL^{-w(\gamma)} [M^\gamma], \]
as the push-forward to the point of an equality in $\cC_{\mot}^\Gamma(M^\gamma/\Gamma)$. 
The equality can thus be 
checked pointwise for every $x \in |M^\gamma/\Gamma|$, which is exactly Proposition \ref{pointwise}.
\end{proof}

\subsection{$E$-polynomials and stringy invariants}

In this subsection let $k= \BC$.
\subsubsection{$E$-polynomials of varieties} Recall for example from \cite{Batyrev1996} that one has a realization homomorphism 
\[ E:K_0 (\Var_k) \longrightarrow \BZ[x,y] \]
given by the $E$-polynomial. For an algebraic variety $M$ over $k$ it is defined by the Hodge-Deligne polynomial
\[ E(M;x,y) = \sum_{i,p,q\geq 0} (-1)^i h_c^{i;p,q}(X) x^py^q,  \]
where the $h_c^{i;p,q}(M)$ are the compactly supported mixed Hodge numbers of $M$. 

By the functoriality of Deligne's mixed Hodge structures \cite{De71} one can do the same construction equivariantly. This way one obtains for a finite abelian group $\Gamma$ a morphism
\[E^\Gamma: K_0 (\Var^\Gamma_k) \rightarrow R_\Gamma[x,y],   \]  
where $R_\Gamma$ denotes the (complex) representation ring of $\Gamma$, that is the group ring of the character group $\widehat{\Gamma}$. Furthermore $E^\Gamma$ admits a decomposition into isotypical components
\[ E^\Gamma = \bigoplus_{\varrho \in \widehat{\Gamma}} E^\varrho. \]
Note however that for $\varrho \not\equiv 1$, $E^\varrho$ is only additive and not multiplicative.

Next we recall the definition of the stringy $E$-polynomial. We restrict ourselves to the special case, where $M$ is a smooth $k$-variety with an action of a finite abelian group $\Gamma$ preserving the canonical bundle of $M$. As before we also assume that the weight function $w$ is constant on $M^\gamma$ for each $\gamma \in \Gamma$. Then we define the stringy $E$-polynomial of the quotient stack $\mathbb{X}=[M/\Gamma]$ by
\[ E_{\st}(\BX;x,y) = \sum_{\gamma \in \Gamma} (xy)^{\dim M-w(\gamma)} E(M^\gamma/\Gamma;x,y). \]

We will also need a twisted version $E^\varrho_{\st}$ of $E_{\st}$ for a class $\varrho \in H_{\rm grp}^2(\Gamma,\mu_n)$, where $H_{\rm grp}^2$ denotes group cohomology. Such a class defines for every $\gamma \in \Gamma$ a character $\varrho_\gamma: \Gamma \to \mu_n$ and we set
\[E_{\st}^\varrho(\BX;x,y) = \sum_{\gamma \in \Gamma} (xy)^{\dim M-w(\gamma)} E^{\varrho_\gamma}(M^\gamma;x,y).\]
\begin{rmk} The notation $E_{\st}^\varrho(\BX;x,y)$ is slightly misleading, as the polynomial depends not only on $\BX$ but rather on $M$ as a $\Gamma$-variety.
\end{rmk}

\subsubsection{$E$-polynomials of Chow motives}\label{Epol}
The $E$-polynomial factorizes through the Grothen\-dieck ring of Chow motives.
More precisely, there exists a Hodge realization functor
\[\mathrm{DM_{gm}}(k,\mathbb{Q})
\longrightarrow
D^b (\mathrm{MHS}^p_{\mathbb{Q}})
\]
with values in the bounded derived category of polarized mixed $\mathbb{Q}$-Hodge structures, cf. 
\cite{levine}\cite{huber},
which extends to a functor
\[\mathrm{DM_{gm}}(k,\Lambda)
\longrightarrow
D^b (\mathrm{MHS}^p_{\mathbb{C}})
\]
with values in the bounded derived category of polarized mixed $\mathbb{C}$-Hodge structures.
Since, as recalled in
Remark \ref{bonda},
$K_0 (\mathrm{DM_{gm}}(k,\Lambda))$ is isomorphic to
$K_0 (\mathrm{M_{rat}} (k, \Lambda))$, it follows that the $E$-polynomial
factorizes through a morphism
\[ E:K_0 (\mathrm{M_{rat}} (k, \Lambda)) \longrightarrow \BZ[x,y]. \]

Now if $X$ is a reduced and separated $k$-scheme of finite type endowed with an action of a finite abelian group $\Gamma$
and $\varrho$ is a complex character of $\Gamma$, we have that
$E^{\varrho} ([X]) = E (\chi^{\varrho}_{\ast, c} (X))$, using notations from \ref{equicase}.

\section{N\'eron Models, isogenies and pairings}\label{sec4} 

\subsection{N\'eron Models}Let $k$ be a field of characteristic $0$ and $\FP$ an abelian variety over $k\llp t \rrp$. The N\'eron model $\FN(\FP)$ of $\FP$ is the unique smooth, separated and finite type group scheme over $k \llb t\rrb$, such that for any smooth $k \llb t\rrb$-scheme $X$ and any morphism $X_{k\llp t\rrp} \rightarrow \FP$ there exists a unique extension $X \rightarrow \FN(\FP)$. In particular we have for each extension $K/k$ a bijection 
\[\FP(K\llp t\rrp) \xrightarrow{\sim} \FN(\FP)(K\llb t\rrb).\]  

This implies that the assignments $\underline{\FP}$ and $\underline{\FN(\FP)}_{\circ}$ are isomorphic, which is why N\'eron models appear naturally in the context of motivic integration,
as first observed in \cite{LS}.  
The following remark makes this more concrete.

\begin{rmk}\label{nerint} Since the canonical bundle of $\FP$ is trivial and $\FP$ is proper, we can choose a global translation-invariant volume forms $\omega$ on $\FP$, which is unique up to a scalar in $k\llp t\rrp$. If $\ord_{\FN}\omega \in \BZ$ denotes the order of vanishing of $\omega$ along the special fiber $ \FN(\FP)$, we have that $t^{-\ord_{\FN}\omega}\cdot \omega$ is a global volume form on $\FN(\FP)$. 
By Lemma \ref{reduc} together with Proposition 12.6 of \cite{clcrelle}, we have
\begin{equation}\label{abvoln} \int_{\underline{\FP}} |\omega| =\BL^{-\ord_{\FN}\omega} \int_{\underline{\FN(\FP)}_{\circ}} |t^{-\ord_{\FN}\omega}\cdot\omega| =  
\BL^{-d-\ord_{\FN}\omega}[\FN(\FP)_k],\end{equation}
with $d$ the dimension of $\FP$.
Indeed, Lemma \ref{reduc} implies the second equality, while Proposition 12.6 of \cite{clcrelle} implies the equality between the left hand side of the first equation and the right hand side of the last equation.
\end{rmk}

Let $\FN^0(\FP)$ denote the fiberwise identity component of $\FN(\FP)$. The quotient 
\[\Phi_{\FP} = \frac{\FN(\FP)_k}{\FN^0(\FP)_k}\]
is a finite \'etale $k$-group scheme called the component group of $\FN(\FP)$. If $k$ is algebraically closed we will consider $\Phi_{\FP}$ simply as a finite group. 

As for finite groups in Remark \ref{compdir} we call a $\FP$-torsor $T$ unramified, if $T$ becomes trivial over $\overline{k} \llp t\rrp$ i.e. if $T(\overline{k} \llp t\rrp) \neq \emptyset$. It then follows from \cite[Corollary 6.5.3]{NeronModels} that $T$ extends uniquely to a $\FN(\FP)$-torsor over $k\llb t\rrb$  which we denote by $\FN(T)$.

\subsection{Self-dual isogenies}\label{sdis} In this subsection we assume $k$ to be algebraically closed of characteristic $0$. Let $\FP$ and $\FP'$ be  abelian varieties over $k\llp t \rrp$ and 
\[\phi: \FP \longrightarrow \FP'\]
be an isogeny with kernel $\Gamma$.  By \cite[Proposition 7.3.6]{NeronModels} $\phi$ extends to an isogeny of N\'eron models $\FN(\phi): \FN(\FP) \rightarrow \FN(\FP')$, meaning that $\FN(\phi)$ is fiberwise finite and surjective on identity components. The kernel $\underline{\Gamma} = \ker(\FN(\phi))$ is an \'etale group scheme over $k\llb t\rrb$, in particular we have an identification of finite groups
\[\Gamma(k\llp t\rrp) = \underline{\Gamma}(k\llb t\rrb) = \underline{\Gamma}_k(k). \]
Notice however, that in general $\Gamma(\overline{k\llp t\rrp}) \neq \Gamma(k\llp t\rrp)$ as $\FN(\phi)$ is only quasi-finite. The following two finite groups will be interesting for us later 
\[ \Gamma^0 = \underline{\Gamma}(k\llb t\rrb) \cap \FN^0(\FP)(k\llb t\rrb), \ \ \ \ \ \ \Gamma' = \underline{\Gamma}(k\llb t\rrb) /\Gamma^0.   \]




The short exact sequence of abelian group schemes over $k\llp t\rrp$
\begin{equation}\label{sses}  0 \to \Gamma \to \FP \to \FP' \to 0\end{equation}
gives rise to a long exact sequence of \'etale cohomology groups
\[0 \to \Gamma(k\llp t\rrp) \to \FP(k\llp t\rrp) \xrightarrow{\phi} \FP'(k\llp t\rrp) \xrightarrow{\partial_\phi} H^1(k\llp t\rrp,\Gamma) \to H^1(k\llp t\rrp,\FP) \to \dots\]

The image of $\partial_\phi$ admits a description in terms of N\'eron models by means of the following commutative diagram with exact rows:

\begin{equation}\label{bigdia} \xymatrix{  0 \ar[r] & \Gamma(k\llp t\rrp) \ar[r] \ar[d]^{\cong} & \FP(k\llp t\rrp) \ar[r] \ar[d] & \FP'(k\llp t\rrp) \ar[r]^{\partial_\phi} \ar[d]& \im(\partial_\phi) \ar[r] \ar[d] & 0 \\
							 0 \ar[r] & \underline{\Gamma}_(k) \ar[r] \ar[d] & \FN(\FP)(k) \ar[r] \ar[d]& \FN(\FP')(k) \ar[r] \ar[d]& \FN(\FP')(k) / \FN(\FP)(k) \ar[r] \ar[d]& 0 \\
							 0 \ar[r] & \Gamma' \ar[r] & \Phi_{\FP} \ar[r] & \Phi_{\FP'} \ar[r] & \Phi_{\FP'}/\Phi_{\FP} \ar[r] & 0.
}\end{equation}
Here we write $ \FN(\FP)(k)$ for $ \FN(\FP)_k(k)$ and $\FP(k\llp t\rrp) \to \FN(\FP)(k)$ for the composition $\FP(k\llp t\rrp) \cong \FN(\FP)(k \llb t \rrb) \to \FN(\FP)(k)$, and similar for $\FP'$. We claim that both arrows in the composition 
\begin{equation}\label{imdesc}   \im(\partial_\phi) \rightarrow \FN(\FP')(k) / \FN(\FP)(k) \rightarrow \Phi_{\FP}/\Phi_{\FP'} \end{equation}
are isomorphisms. Indeed, for the first arrow we notice that 
\[\ker\left(\FP(k\llp t\rrp) \to \FP(k)\right) \cong \ker \left(\FP'(k\llp t\rrp) \to \FP'(k)\right)
\] as the restriction $\FP(k\llp t\rrp) \to \FP(k)$ induces a bijection on torsion points \cite[Proposition 7.3.3]{NeronModels}. The second arrow is an isomorphism since the map $\FN^0(\FP)(k) \rightarrow \FN^0(\FP')(k)$ is surjective.

\begin{lemma}\label{neutc} Let $m = |\Phi_\FP|$, $m' = |\Phi_{\FP'}|$ and $\phi^{r}$ the composition $\phi \circ [r]$ for any positive integer $r$. Then we have  
\[ \im(\partial_\phi) = \ker\left( H^1(k\llp t\rrp,\Gamma) \rightarrow H^1(k\llp t\rrp,\ker(\phi^{m'}))\right) \]
and
\[ \Gamma^0 = \im\left(\ker(\phi^m)(k\llp t\rrp) \xrightarrow{[m]} \FP[n](k\llp t\rrp)\right).     \]
\end{lemma}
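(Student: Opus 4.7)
The plan is to treat the two equalities separately, with a common tool: over the algebraically closed characteristic-zero field $k$, the $k\llb t\rrb$-points of a smooth connected commutative group scheme are divisible, and multiplication by any positive integer on such a group scheme is étale. I would combine this throughout with the identification $\im(\partial_\phi) \cong \Phi_{\FP'}/\Phi_{\FP}$ coming from diagram \eqref{bigdia}, under which $\partial_\phi(\alpha)$ is read off as the component of $\alpha_k$ modulo $\Phi_{\FP}$.

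For the first equality, I would start from the short exact sequence of finite étale $k\llp t\rrp$-group schemes
\[0 \longrightarrow \Gamma \longrightarrow \ker(\phi^{m'}) \xrightarrow{\ \phi\ } \FP'[m'] \longrightarrow 0,\]
obtained from $\ker(\phi^{m'}) = \phi^{-1}(\FP'[m'])$. Its long exact sequence in étale cohomology produces a connecting map whose image is exactly $\ker\bigl(H^1(\Gamma)\to H^1(\ker(\phi^{m'}))\bigr)$, and naturality with respect to the inclusion $\ker(\phi^{m'})\hookrightarrow \FP$ into $0\to\Gamma\to\FP\to\FP'\to 0$ identifies this connecting map with the restriction of $\partial_\phi$ to $\FP'[m'](k\llp t\rrp)$. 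So the first equality is equivalent to the surjectivity of $\partial_\phi\rvert_{\FP'[m']}: \FP'[m'](k\llp t\rrp) \to \im(\partial_\phi)$.

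To produce the required $m'$-torsion representatives, take $\alpha \in \FP'(k\llp t\rrp)$ with component $c \in \Phi_{\FP'}(k)$. Pick any lift $c' \in \FN(\FP')(k)$ of $c$; since $m'=|\Phi_{\FP'}|$ kills $\Phi_{\FP'}$, we have $m'c' \in \FN^0(\FP')(k)$, and divisibility of the connected smooth commutative $k$-group $\FN^0(\FP')_k$ provides $c''\in \FN^0(\FP')(k)$ with $m'c''=m'c'$. Then $c'-c''$ is an $m'$-torsion point of $\FN(\FP')(k)$ with component $c$. Because $[m']$ is étale on the smooth $k\llb t\rrb$-group scheme $\FN(\FP')$ in characteristic zero, the group scheme $\FN(\FP')[m']$ is étale over $k\llb t\rrb$, and Hensel's lemma lifts $c'-c''$ to an $m'$-torsion point $\alpha' \in \FN(\FP')(k\llb t\rrb) = \FP'(k\llp t\rrp)$. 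Since $\alpha$ and $\alpha'$ share the same component in $\Phi_{\FP'}$, diagram \eqref{bigdia} gives $\partial_\phi(\alpha) = \partial_\phi(\alpha')$, completing the reduction.

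For the second equality, observe that $\ker(\phi^m) = [m]^{-1}(\Gamma)$, so the image in question is $m\FP(k\llp t\rrp)\cap \Gamma$, and I must identify it with $\Gamma^0$. For $\supseteq$: if $\gamma = ma$ with $a\in \FP(k\llp t\rrp)$, then $[\gamma_k] = m[a_k]$ is killed by $|\Phi_{\FP}|=m$, so $\gamma_k \in \FN^0(\FP)_k$ and $\gamma \in \Gamma^0$. For $\subseteq$: given $\gamma \in \Gamma^0$, divisibility of the connected smooth commutative $k$-group $\FN^0(\FP)_k$ gives $\bar w \in \FN^0(\FP)(k)$ with $m\bar w = \gamma_k$, and since $[m]$ is étale on $\FN^0(\FP)$, Hensel's lemma lifts $\bar w$ to $w \in \FN^0(\FP)(k\llb t\rrb)\subseteq \FP(k\llp t\rrp)$ satisfying $mw=\gamma$; such a $w$ lies in $\ker(\phi^m)(k\llp t\rrp)$. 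The main technical point common to both parts is precisely this lifting from the special fiber to a $k\llb t\rrb$-point with the prescribed torsion or divisibility property, and it is exactly here that both characteristic-zero hypotheses (divisibility on the special fiber, étaleness of $[m]$ on the Néron model) are indispensable; everything else is a diagram chase in \eqref{bigdia} and in the relevant long exact sequences.
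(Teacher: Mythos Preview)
Your proof is correct. For the second equality it matches the paper's argument almost exactly: the paper simply invokes divisibility of $\FN^0(\FP)(k\llb t\rrb)$, which you unpack via divisibility of the special fiber plus a Hensel lift along the \'etale map $[m]$. For the first equality the paper takes a slightly different route. Rather than producing $m'$-torsion representatives explicitly, it writes down the commuting square
\[
\xymatrix{
\FP'(k\llp t\rrp) \ar[r]^-{\partial_\phi} \ar[d]^{[m']} & H^1(k\llp t\rrp,\Gamma) \ar[d]^{i} \\
\FP'(k\llp t\rrp) \ar[r]^-{\partial_{\phi^{m'}}} & H^1(k\llp t\rrp,\ker(\phi^{m'}))
}
\]
and observes that $\im([m'])\subset \FN^0(\FP')(k\llb t\rrb)$, which is divisible, so $\partial_{\phi^{m'}}\circ[m']=0$; hence $i\circ\partial_\phi=0$, giving $\im(\partial_\phi)\subseteq\ker(i)$ in one stroke. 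The opposite inclusion is then immediate from your own observation that $\ker(i)$ is the image of $\partial_\phi$ restricted to $\FP'[m']$. Your argument trades this diagram chase for a concrete lifting of torsion points through the component group; both rely on the same divisibility input, but the paper's version is shorter while yours makes the role of $\Phi_{\FP'}$ and of the identification \eqref{imdesc} more visible.
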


\begin{proof} Consider the commuting diagram

\[ \xymatrix{ \FP(k\llp t\rrp) \ar[r]^{\phi} \ar[d]^{=} & \FP'(k\llp t\rrp) \ar[r]^{\partial_\phi} \ar[d]^{[m']} & H^1(k\llp t\rrp,\Gamma) \ar[d] \\
		\FP(k\llp t\rrp) \ar[r]^{\phi^{m'}} &	\FP'(k\llp t\rrp) \ar[r]^<<<<<<<{\partial_{\phi^{m'}} \ \ \ } & H^1(k\llp t\rrp,\ker(\phi^{m'})).
}\]
By construction $\im([m']) \subset \FN^0(\FP')(k\llb t\rrb)$ and since $\FN^0(\FP')(k\llb t\rrb)$ is divisible we deduce $\partial_{\phi^{m'}} \circ [m'] = 0$. This implies the first assertion. The second one follows directly from the divisibility of $\FN^0(\FP)(k\llb t\rrb)$.
\end{proof}

Finally we consider the case where $\FP' = \FPh$ is the dual abelian variety of $\FP$ and the isogeny $\phi$ is selfdual i.e. $\widehat{\phi} = \phi$. In this case dualizing the short exact sequence \eqref{sses} gives an isomorphism of $\Gamma$ with its Cartier dual, which in turn defines a non-degenerate pairing
\[ \langle \cdot, \cdot \rangle_{\phi}: \Gamma \times \Gamma \rightarrow \mu_n,   \]
compatible with the action of $\Gal(k\llp t\rrp) = \widehat{\mu}$ on $\Gamma$. Here $n$ denotes the order of $\Gamma$. We will use the same notation for the pairing 
\begin{equation}\label{cpp} \langle  \cdot, \cdot \rangle_{\phi}: \Gamma(k\llp t\rrp) \times H^1(k\llp t\rrp,\Gamma) \rightarrow H^1(k\llp t\rrp, \mu_n) = \Hom(\widehat{\mu},\mu_n)= \BZ/n\BZ,  \end{equation}
induced by taking cup-products. As in \cite[Proposition 3]{Og62} this pairing is again non-degenerate.

\begin{lemma}\label{orthor} With respect to the pairing \eqref{cpp} the groups $\Gamma^0 \subset \Gamma(k\llp t\rrp)$ and $\im(\partial_\phi) \subset H^1(k\llp t\rrp,\Gamma) $ are exact anihilators of each other.
\end{lemma}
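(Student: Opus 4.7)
The plan is to reduce the statement to the functoriality of the Cartier pairing with respect to a comparison between the self-dual isogeny $\phi$ and the self-dual isogeny $\phi^m := \phi \circ [m] = [m] \circ \phi$ (self-dual since $\widehat{[m]} = [m]$ and $\widehat{\phi}=\phi$). In the self-dual setting the integers $m = |\Phi_{\FP}|$ and $m' = |\Phi_{\FPh}|$ coincide, by perfectness of Grothendieck's monodromy pairing for a self-dual abelian variety over $k\llp t \rrp$. Hence both descriptions in Lemma~\ref{neutc} refer to the single self-dual isogeny $\phi^m$, whose kernel $\ker(\phi^m)$ contains $\Gamma$.

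The central ingredient is a compatibility between the Cartier pairings $\langle \cdot,\cdot \rangle_{\phi}$ on $\Gamma$ and $\langle \cdot,\cdot \rangle_{\phi^m}$ on $\ker(\phi^m)$. I would introduce the two natural morphisms $\iota: \Gamma \hookrightarrow \ker(\phi^m)$ (the inclusion) and $[m]_*: \ker(\phi^m)\to\Gamma$ (well-defined since $[m](\ker\phi^m)\subset\ker\phi = \Gamma$), which arise as restrictions to kernels of the two commuting squares expressing $\phi^m = [m]\circ\phi$ and $\phi^m = \phi\circ[m]$ respectively. A diagram chase, applying Cartier duality to the first square and using the identifications $\widehat{\phi}=\phi$, $\widehat{[m]}=[m]$ and $\widehat{\FP}=\FPh$, recovers the second square, and hence identifies the Cartier dual $\widehat{\iota}$ with $[m]_*$ under the self-duality isomorphisms $\widehat{\Gamma}\cong\Gamma$ and $\widehat{\ker(\phi^m)}\cong\ker(\phi^m)$ produced by the dualized self-dual short exact sequences. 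Functoriality of the cup product then yields the compatibility
\[
\langle [m]_*(q),\,\xi\rangle_{\phi} \;=\; \langle q,\,\iota_*(\xi)\rangle_{\phi^m}
\]
for $q\in\ker(\phi^m)(k\llp t\rrp)$ and $\xi\in H^1(k\llp t\rrp, \Gamma)$. Combined with Lemma~\ref{neutc}, this at once gives the inclusion $\Gamma^0 \subset \im(\partial_\phi)^\perp$: any $\gamma\in\Gamma^0$ has the form $[m]_*(q)$, while any $\xi\in\im(\partial_\phi)$ is killed by $\iota_*$, so the right-hand side vanishes.

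To upgrade this inclusion to an equality of annihilators, I would count orders. By definition $|\Gamma(k\llp t\rrp)|/|\Gamma^0|=|\Gamma'|$, and combining diagram~\eqref{bigdia} with~\eqref{imdesc} identifies $|\im(\partial_\phi)|=|\Phi_{\FPh}/\Phi_{\FP}|$ and $|\Gamma'|=|\ker(\Phi_{\FP}\to\Phi_{\FPh})|$. The equality $|\Phi_{\FP}|=|\Phi_{\FPh}|$ then forces $|\Gamma'|=|\im(\partial_\phi)|$, so $|\Gamma^0|\cdot|\im(\partial_\phi)|=|\Gamma(k\llp t\rrp)|=|H^1(k\llp t\rrp, \Gamma)|$. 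Non-degeneracy of~\eqref{cpp} then upgrades $\Gamma^0\subset\im(\partial_\phi)^\perp$ to the desired equality (and symmetrically gives $\im(\partial_\phi)=(\Gamma^0)^\perp$). The main obstacle will be making the identification $\widehat{\iota}=[m]_*$ precise through the Cartier-dualized self-dual short exact sequences; a secondary delicate point is the equality $|\Phi_{\FP}|=|\Phi_{\FPh}|$, which follows from self-duality of $\FP$ via Grothendieck's monodromy pairing.
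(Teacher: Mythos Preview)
Your proposal is correct and follows essentially the same route as the paper: both reduce to the compatibility $\langle [m]_*(q),\xi\rangle_{\phi}=\langle q,\iota_*(\xi)\rangle_{\phi^m}$ (the paper writes this as $\langle x^m,y\rangle_\phi=\langle x,i(y)\rangle_{\phi^m}$ and calls it ``essentially by construction''), invoke Lemma~\ref{neutc} to obtain the vanishing on $\Gamma^0\times\im(\partial_\phi)$, and then use the cardinality identity $|\Gamma^0|\cdot|\im(\partial_\phi)|=|\Gamma(k\llp t\rrp)|$ coming from $|\Phi_{\FP}|=|\Phi_{\FPh}|$ together with non-degeneracy of the pairing. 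Your write-up is slightly more explicit about why the compatibility holds (via the Cartier-dual identification $\widehat{\iota}=[m]_*$), which is a welcome clarification of a step the paper leaves to the reader.
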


\begin{proof} The Grothendieck pairing on component groups gives a perfect pairing between $\Phi_{\FP}$ and $\Phi_{\FPh}$ \cite[Expos\'e IX]{GRR67}, in particular $|\Phi_{\FP}|=|\Phi_{\FPh}|$. From \eqref{bigdia} and \eqref{imdesc} we thus see 
\[| \im(\partial_\phi)| |\Gamma^0| = |\Gamma(k\llp t\rrp)|.\]
Hence it is enough to show that $\langle \cdot, \cdot \rangle_\phi$ vanishes on $\Gamma^0 \times \im(\partial_\phi)$. For this let $m= |\Phi_{\FP}|=|\Phi_{\FPh}|$ and $\phi^m= \phi \circ [m]$ as in Lemma \ref{neutc}. Then $\phi^m$ is again self-dual and essentially by construction we have the following compatibilty 
\[  \langle x^m,y \rangle_{\phi} = \langle x, i(y)\rangle_{\phi^m},\]
where $x \in \ker(\phi^m)(k\llp t\rrp)$, $y \in H^1(k\llp t\rrp,\Gamma)$ and $i:H^1(k\llp t\rrp,\Gamma) \rightarrow H^1(k\llp t\rrp,\ker(\phi^m))$.
The lemma now follows directly from Lemma \ref{neutc}
\end{proof}

Assume now that $\underline{\Gamma}$ is constant over $k\llp t\rrp$, which will be the only case of interest for us. In this case we identify $\Gamma$ with its group of $k$-points. We further have an identification $ H^1(k\llp t\rrp,\Gamma) \cong \Gamma$ given by the choice of a primitive $|\Gamma|$-th root of unity, see Remark \ref{compdir}. The following corollary will be crucial for us.

\begin{corollary}\label{csum} 
 For any $\gamma \notin \im(\partial_\phi)$ we have
\[\chi_{*,c}^{\varrho_\gamma} ( [\FN(\FP)_k]) = 0. \]
\end{corollary}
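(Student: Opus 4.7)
The plan is to exploit that $\Gamma$ sits inside $\FN(\FP)_k$ as a subgroup scheme, with the corollary becoming a consequence of Corollary \ref{autab} together with the annihilator relation in Lemma \ref{orthor}. The starting observation is that via the identification $\underline{\Gamma}(k\llb t\rrb) = \underline{\Gamma}_k(k) \subset \FN(\FP)_k(k)$, the group $\Gamma$ acts algebraically on $\FN(\FP)_k$ by translations, and the character $\varrho_\gamma$ is by construction the character of $\Gamma$ induced by pairing with $\gamma \in H^1(k\llp t\rrp, \Gamma)$. Under the identification provided by the chosen root of unity, the hypothesis $\gamma \notin \im(\partial_\phi)$ and Lemma \ref{orthor} say precisely that $\varrho_\gamma$ is \emph{non-trivial} on the subgroup $\Gamma^0 \subset \Gamma$.

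Next I would show that $\Gamma^0$ acts trivially on the motive $\pi_!(\mathds{1}_{\FN(\FP)_k}) \in \DAC(k, \Lambda)$. Since $k$ is algebraically closed, the connected components $N_c$ of $\FN(\FP)_k$ indexed by $c \in \Phi_\FP$ all admit $k$-points, so each $N_c$ is (non-canonically) isomorphic to $\FN^0(\FP)_k$. Any $\xi \in \Gamma^0 \subset \FN^0(\FP)_k(k)$ is a torsion $k$-point of the identity component and translation by $\xi$ preserves each $N_c$; transported to $\FN^0(\FP)_k$ it becomes translation by a torsion element. By Corollary \ref{autab} this acts as the identity on $\pi_!(\mathds{1}_{\FN^0(\FP)_k})$, and hence on $\pi_!(\mathds{1}_{N_c})$ for each $c$. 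Summing over components gives the triviality of the $\Gamma^0$-action on $\pi_!(\mathds{1}_{\FN(\FP)_k})$.

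For the final step, pick $\xi_0 \in \Gamma^0$ with $\varrho_\gamma(\xi_0) \neq 1$, which exists by the first paragraph. With $a$ the $\Gamma$-action on $\pi_!(\mathds{1}_{\FN(\FP)_k})$ and $p_{\varrho_\gamma} = |\Gamma|^{-1} \sum_{g \in \Gamma} \varrho_\gamma(g)^{-1} a(g)$, a direct reindexing gives $a(\xi_0) \circ p_{\varrho_\gamma} = \varrho_\gamma(\xi_0) \, p_{\varrho_\gamma}$, so on the image of $p_{\varrho_\gamma}$ the action of $\xi_0$ is multiplication by $\varrho_\gamma(\xi_0)$. But we have just shown $a(\xi_0)$ restricts to the identity there, hence $(\varrho_\gamma(\xi_0) - 1) \cdot \id = 0$ on the $\varrho_\gamma$-isotypical component. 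Since $\varrho_\gamma(\xi_0) - 1$ is a non-zero element of $\Lambda = \mathbb{Q}(\mu_r)$ and $\DAC(k, \Lambda)$ is $\Lambda$-linear, this scalar is invertible, forcing $\chi^{\varrho_\gamma}_{*, c}([\FN(\FP)_k]) = [\pi_!(\mathds{1}_{\FN(\FP)_k})^{\varrho_\gamma}] = 0$.

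The only real point to watch is the passage from Corollary \ref{autab}, which concerns the \emph{connected} commutative group $\FN^0(\FP)_k$, to the possibly disconnected $\FN(\FP)_k$; this is what makes the splitting into components and the use of algebraic closedness of $k$ to trivialize the $\FN^0(\FP)_k$-torsors $N_c$ essential. Everything else is bookkeeping around the projector $p_{\varrho_\gamma}$ and the annihilator statement in Lemma \ref{orthor}.
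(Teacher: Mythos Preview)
Your proof is correct and follows essentially the same strategy as the paper: use Lemma \ref{orthor} to see that $\varrho_\gamma$ is non-trivial on $\Gamma^0$, use Corollary \ref{autab} to see that $\Gamma^0$ acts trivially on the motive of $\FN(\FP)_k$, and conclude that the $\varrho_\gamma$-isotypical component vanishes. The paper's proof is two sentences and leaves both the passage from the connected $\FN^0(\FP)_k$ to the disconnected $\FN(\FP)_k$ and the projector argument implicit; you have spelled these out carefully, and your final paragraph correctly identifies the only genuine step requiring attention.
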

\begin{proof} As $\FN^0(\FP)_k$ is connected, the action of the finite group $\Gamma^0$ on  the homological Chow motive $M (\FN(\FP)_k)$ of $\FN(\FP)_k$
 is trivial
by Corollary \ref{autab}.
On the other hand,  Lemma \ref{orthor} implies that $\varrho_\gamma$ is non-trivial on $\Gamma^0$ and thus 
$\chi_{*,c}^{\varrho_\gamma} ( [\FN(\FP)_k]) = 0$.
\end{proof}

\section{The Hausel-Thaddeus conjecture and a motivic version of  topological mirror symmetry}\label{sec5}

In this section we recall some basic facts about Higgs bundles and state the topological mirror symmetry conjecture of Hausel-Thaddeus. For a more detailed account we refer to their paper \cite{MR1990670} and the references therein. 

\subsection{Higgs bundles}Let $C$ be a connected, smooth, projective curve of genus $g \geq 2$ over a field $k$ of characteristic $0$ . A \textit{Higgs bundle} on $C$ is a pair $(E,\theta)$ consisting of a vector bundle $E \to C$ and a twisted endomorphism 
\[\theta: E \to E\otimes K_C,\]
where $K_C$ denotes the canonical bundle of $C$. 

\begin{definition}\label{modsp}  Let $L \to C$ be a line bundle of degree $d \in \BZ$. \begin{enumerate} 
\item We denote by $\M_n^L$  the moduli space of semi-stable $L$-twisted $\mathrm{SL}_n$-Higgs bundles, that is Higgs bundles $(E,\theta)$ of rank $n$ on $C$ together with an isomorphism $\det E \cong L$ satisfying $\tr (\theta) = 0 \in H^0(C,K_C)$.
\item The finite group scheme $\Gamma = \Pic(C)[n] \equiv (\BZ/n\BZ)^{2g}$ acts on $ \M_n^L$ by tensoring the underlying vector bundle of a Higgs field. We write $\widehat{\BM}_n^d= [\M_n^L / \Gamma]$ and $\widehat{\M}_n^d = \M_n^L / \Gamma$ for the stack resp. geometric quotient. 
\end{enumerate}
\end{definition}

\begin{rmk} $\widehat{\BM}_n^d$ and $\widehat{\M}_n^d$ can be identified with a suitably defined moduli  stack resp. space of semi-stable $\PGL_n$ Higgs-bundles of degree $d$ \cite[Section 6]{GH13},  in particular they depend only on $d$ and not on $L$.
\end{rmk}

We will only consider the case where $n$ and $d$ are coprime, in which case every semi-stable Higgs bundle is stable and $\M_n^L$ is a smooth quasi-projective variety. 

\subsection{Hitchin fibrations and duality}\label{fibnot}

To a Higgs bundle $(E,\theta)$ one can associate its characteristic polynomial, whose $i$-th coefficient is given by $(-1)^i\tr(\wedge^i \theta) \in H^0(C,K_C^i)$. This defines morphisms
\[\xymatrix{ \M^L_n \ar[dr]^{h} &   &\widehat{\M}^d_{n} \ar[dl]_{\hat{h}}\\
				& \A = \bigoplus_{i=2}^n H^0(C,K^{\otimes i}).
}
\]
For $a \in \A$ we write $\M^L_{n,a}$ and $\widehat{\M}^d_{n,a}$ for the fibers $h^{-1}(a)$ and $\hat{h}^{-1}(a)$ respectively. To describe the generic fibers of $h$ and $\hat{h}$ we use the spectral curve construction. For any $a=(a_i)_{2\leq i \leq n} \in \A$ the spectral curve $C_a$ is the subscheme inside the total space of $K_C$ defined by the equation 
\[ \{X^n + a_2 X^{n-2}+ \dots + a_n = 0\} \subset \text{Tot}(K_C).\]
There is a non-empty open subvariety $\A^{sm} \subset \A$ defined by the condition that $C_a$ is smooth and geometrically connected. For $a \in \A^{sm}$ the projection $\pi: C_a \to C$ is a ramified cover of degree $n$ and hence induces a degree preserving norm map
\[ \Nm: \Pic(C_a) \to \Pic(C).  \]
The Prym variety $\P_a$ is defined as the kernel of $\Nm$. It is an abelian variety and its dual $\widehat{\P}_a$ can be identified with the quotient $\P_a/ \Gamma$, where the inclusion $\pi^*: \Gamma \to \P_a$ is defined by pullback along $\pi: C_a \to C$. We quickly explain this duality, see also \cite[Lemma 2.3]{MR1990670}. By definition there is a short exact sequence

\[  0 \longrightarrow \P_a \longrightarrow   \Pic^0(C_a) \xrightarrow{\Nm} \Pic^0(C) \longrightarrow 0.   \]
Dualizing the sequence and using the auto-duality of $\Pic^0$ we get

\begin{equation}\label{dualseq} 0 \longrightarrow \Pic^0(C) \xrightarrow{\pi^*}   \Pic^0(C_a) \longrightarrow \widehat{\P}_a  \longrightarrow 0.\end{equation}
Finally there is an isomorphism $\frac{\Pic^0(C_a)}{\Pic^0(C)} \cong \P_a / \Gamma$ induced by the following morphism
\begin{align*}    \Pic^0(C_a) &\longrightarrow \P_a / \Gamma \\          
 M & \longmapsto M \otimes \pi^* \Nm(M^{-1})^\frac{1}{n},     \end{align*}
where $(\cdot)^\frac{1}{n}: \Pic^0(C) \xrightarrow{\sim} \Pic^0(C)/ \Gamma$ denotes the isomorphism induced by the $n$-th power map. In particular the quotient $\P_a \rightarrow \P_a/ \Gamma$ factors as 
\begin{equation}\label{selfd} \P_a \rightarrow \Pic^0(C_a) \rightarrow \P_a /\Gamma,  \end{equation}
and is therefore selfdual.

\begin{proposition}\cite[Proposition 3.6]{MR998478}\label{torfib} For any $a \in \A^{sm}$ we have isomorphisms 
\[ \M^L_{n,a} \cong (\Nm)^{-1}(L) \ \ \text{  and  } \ \  \widehat{\M}^d_{n,a} \cong (\Nm)^{-1}(L)/ \Gamma \cong \Pic^d(C_a)/\Pic^0(C).\]
In particular $\M^L_{n,a} $ and $\widehat{\M}^d_{n,a}$ are torsors for $\P_a$ and $\widehat{\P}_a$ respectively.
\end{proposition}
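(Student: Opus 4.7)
The plan is to invoke the Beauville--Narasimhan--Ramanan spectral correspondence: for $a \in \A^{sm}$, the spectral curve $\pi : C_a \to C$ is smooth of degree $n$, and pushforward along $\pi$ gives an equivalence between line bundles $M$ on $C_a$ (of a fixed degree) and pairs $(E,\theta)$, where $E = \pi_*M$ is a rank $n$ vector bundle on $C$ and $\theta : E \to E\otimes K_C$ is obtained from multiplication by the tautological section of $\pi^*K_C$ on $\text{Tot}(K_C)\supset C_a$; the characteristic polynomial of this $\theta$ is tautologically $a$. Because $a=(a_i)_{i\geq 2}$ has no degree-one component, the induced endomorphism is automatically traceless, so the $\SL_n$ condition reduces to a condition on the determinant alone.

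The next step is to compute the determinant. A standard calculation gives $\det (\pi_*M) = \Nm(M) \otimes \det(\pi_*\Oc_{C_a})$, where $\det(\pi_*\Oc_{C_a})$ is a line bundle on $C$ depending only on $C$ and $n$. Fixing $\det E \cong L$ therefore amounts to fixing $\Nm(M)$ to be a specific line bundle on $C$; after absorbing the fixed twist into the choice of $L$ (and thereby adjusting the appropriate degree of $M$ on $C_a$), one obtains $\M^L_{n,a} \cong \Nm^{-1}(L)$. Since $\P_a = \ker(\Nm)$ acts freely and transitively on $\Nm^{-1}(L)$ by tensor product, this establishes the first isomorphism and its torsor structure.

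For the $\PGL_n$ side, recall that $\Gamma = \Pic(C)[n]$ acts on $\M^L_{n,a}$ by tensoring $E$ with a line bundle on $C$; under BNR this corresponds to $N \in \Gamma$ acting on $M \in \Nm^{-1}(L)$ by $M \mapsto M\otimes \pi^*N$ (the condition $N^n \cong \Oc_C$ ensures that $\Nm(M\otimes \pi^*N) = \Nm(M)\otimes N^n = L$ is preserved). Thus $\widehat{\M}^d_{n,a} \cong \Nm^{-1}(L)/\Gamma$. To identify this with $\Pic^d(C_a)/\pi^*\Pic^0(C)$, consider the natural map $\Nm^{-1}(L) \to \Pic^d(C_a)/\pi^*\Pic^0(C)$. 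Given any $[M]$ on the right, one can find a representative in $\Nm^{-1}(L)$ by choosing $N\in\Pic^0(C)$ with $N^n = L\otimes \Nm(M)^{-1}$, which exists because $[n]$ is surjective on the divisible group $\Pic^0(C)$; this proves surjectivity. Two elements of $\Nm^{-1}(L)$ differing by $\pi^*N$ must satisfy $N^n = \Oc_C$, so $N\in\Gamma$, giving the required injectivity modulo $\Gamma$.

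The torsor statement for $\widehat{\M}^d_{n,a}$ follows because the $\P_a$-action on $\Nm^{-1}(L)$ descends along the quotient by $\Gamma = \ker(\P_a \to \P_a/\Gamma) = \ker(\P_a \to \widehat{\P}_a)$, producing a simply transitive $\widehat{\P}_a$-action on $\Nm^{-1}(L)/\Gamma$; this uses the selfduality of $\phi : \P_a \to \widehat{\P}_a$ from \eqref{selfd}. The only genuinely non-formal part is the quotient identification in the previous paragraph; everything else is a direct citation of BNR together with bookkeeping of degrees and the adjusted line bundle $L\otimes \det(\pi_*\Oc_{C_a})^{-1}$.
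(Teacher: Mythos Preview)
The paper does not give its own proof of this proposition: it is stated with the citation \cite[Proposition 3.6]{MR998478} (Beauville--Narasimhan--Ramanan) and no argument is supplied. Your sketch is a correct outline of the standard BNR proof, including the determinant formula $\det(\pi_*M)=\Nm(M)\otimes\det(\pi_*\Oc_{C_a})$ and the resulting twist, which the paper suppresses in its notation. Your surjectivity/injectivity argument for $\Nm^{-1}(L)/\Gamma \cong \Pic^d(C_a)/\Pic^0(C)$ via divisibility of $\Pic^0(C)$ is exactly the computation the paper carries out just above the proposition (around \eqref{dualseq}--\eqref{selfd}) to identify $\widehat{\P}_a$ with $\P_a/\Gamma$. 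One small remark: for the final torsor statement you do not actually need the selfduality of $\phi$; you only use the identification $\widehat{\P}_a \cong \P_a/\Gamma$, so that quotienting the simply transitive $\P_a$-action by $\Gamma$ yields a simply transitive $\widehat{\P}_a$-action.
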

In \cite[Theorem 3.7]{MR1990670} Hausel and Thaddeus extend the duality between $\P_a$ and $\widehat{\P}_a$ to a duality between the torsors $\M^L_{n,a} $ and $\widehat{\M}^d_{n,a}$ and interpret this as (twisted) SYZ-mirror symmetry of $\M_n^L$ and $\widehat{\M}^{d'}_n$. This was their motivation for conjecturing Theorem \ref{htconj} below.

\begin{rmk}(Volume forms)\label{codim2} One can extend the the construction of the Prym variety to all of $\A$ and obtain a group scheme $\P \to \A$ acting on $\M^L_n$. The restriction $\M^{L,red}_n$ of $\M^L_n$ to $\A^{red} = \{a\in \A \ | \ C_a \text{ is reduced }\}$ contains a open $\widetilde{\M}^L_n \subset \M^{L,red}_n$ which is a torsor under $\P^{red}$ and dense in every Hitchin fiber over $\A^{red}$ \cite[Proposition 4.16.1]{MR2653248}. 
As $\mathrm{codim}\ \A \setminus \A^{red} \geq 2$ the same is true for $ \M^L_n \setminus \widetilde{\M}^L_n$ and we use this to construct an explicit $\Gamma$-invariant volume from $\omega^L$ on $ \M^L_n$ as follows.

Let $\omega_{\A}$ be a volume form on the affine space $\A$ and $\omega_P$ a translation-invariant trivializing section of the relative canonical bundle $K_{\P / \A}$. Then $\omega_{\P} \wedge \omega_{\A}$ is a global volume form on $\P$ which induces one on the torsor $\widetilde{\M}^L_n$ \cite[Lemma 6.13]{gwz}. As we have $\mathrm{codim}\ \M^L_n \setminus \widetilde{\M}^L_n \geq 2$ this form extends to a volume form $\omega^L$ on $\M^L_n$.
\end{rmk}

\subsection{The main result}
Let
\begin{equation}\label{weilp} \langle \cdot,\cdot \rangle: \Gamma \times \Gamma \longrightarrow \mu_n, \end{equation}
be the Weil pairing on $\Gamma = \Pic^0(C)[n]$. We write $\varrho \in H^2_{\rm grp}(\Gamma,\mu_n)$ for the class defined by $\langle \cdot,\cdot \rangle$ viewed as a $2$-cocycle and for $\gamma \in \Gamma$
\[ \varrho_\gamma = \langle \gamma, \cdot \rangle : \Gamma \to \mu_n\]
for the character induced by $\gamma$. 

The following theorem was conjectured by Hausel and Thaddeus in \cite[Conjecture 5.1]{MR1990670} and proven by Groechenig, Ziegler and the second author.

	\begin{theorem}\cite[Theorem 7.21]{gwz}\label{htconj} Assume $k= \BC$. Let $d,d'$ be integers prime to $n$ and $L,L'$  line bundles on $C$ of degree $d$ and $d'$ respectively. Let $q$ be the multiplicative inverse of $d'$ modulo $n$. Then we have
	\begin{equation}\label{htconjeq} E(\M_n^L;x,y) = E_{\st}^{\varrho^{-dq}}(\widehat{\BM}^{d'}_n;x,y). \end{equation}
\end{theorem}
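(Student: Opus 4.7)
The plan is to deduce \eqref{htconjeq} from the motivic identity
\[ \vartheta\bigl([\M_n^L]\bigr) \;=\; \vartheta\bigl([\widehat{\M}_n^{d'}]_{\st}^{\varrho^{-dq}}\bigr) \qquad\text{in }K_0(\mathrm{M_{rat}}(k,\Lambda))_{\mathrm{loc}}, \]
the natural generalization of Theorem \ref{mainintro} to unequal degrees, by applying the $E$-polynomial realization of \ref{Epol} (which factors through this Grothendieck ring, sending $\BL\mapsto xy$). The motivic identity is then proved along the lines of \cite{gwz}, with the motivic integration theory of Section \ref{sec2} and the N\'eron-model pairing of Section \ref{sec4} replacing $p$-adic integration and $p$-adic Tate duality.

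\textbf{Rewriting both sides as motivic integrals.} By Remark \ref{nerint} applied to the relative Prym structure and the volume form $\omega^L$ of Remark \ref{codim2}, present $\vartheta([\M_n^L])$ up to an explicit power of $\BL$ as $\int^{\mathrm{mot}}_{\underline{\M_n^L}}|\omega^L|$. On the orbifold side, the equivariant orbifold formula of Theorem \ref{equivorb} lifts each summand $(xy)^{\dim \M_n^L-w(\gamma)}E^{\varrho_\gamma^{-dq}}(\M_n^{L,\gamma}/\Gamma)$ of $E^{\varrho^{-dq}}_{\st}(\widehat{\BM}_n^{d'})$ to a $\varrho^{-dq}$-isotypical motivic integral $\int^{\mathrm{mot},\varrho^{-dq}}_{X^{\natural,\gamma}} (\M_n^L)^\natural \times^\Gamma T_{\gamma^{-1}}\,|\omega_{orb}|$. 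Summing over $\gamma\in\Gamma$ expresses the right-hand side as a single $\Gamma$-equivariant motivic integral over the arc space of $\widehat{\M}_n^{d'}$.

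\textbf{Fubini and reduction to fibers.} Apply the equivariant motivic Fubini theorem, Proposition \ref{eqmotfubini}, to the two Hitchin fibrations $\underline{h}:\underline{\M_n^L}\to\underline{\A}$ and $\underline{\hat h}:\underline{\widehat{\M}_n^{d'}}\to\underline{\A}$. Outside a definable bad locus $Z\subset\underline{\A}$ of $K$-dimension strictly less than $\dim\A$---which one enlarges to cover the arcs meeting $\A\setminus\A^{sm}$ and any other degeneracy of the fibration---the problem reduces to a pointwise comparison of fiber integrals for each $a\in\underline{\A}\setminus Z$. Over $\A^{sm}$, Proposition \ref{torfib} identifies the fibers as torsors under the Prym $\P_a$ and its dual $\widehat{\P}_a=\P_a/\Gamma$, and the quotient isogeny $\phi$ of \eqref{selfd} is self-dual, with kernel pairing \eqref{cpp} matching the Weil pairing \eqref{weilp} that defines $\varrho$.

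\textbf{Fiber comparison and main obstacle.} For each such $a$, Remark \ref{nerint} evaluates the $\SL_n$ fiber integral as either $0$ (when $\M^L_{n,a}$ has no $k\llp t\rrp$-point) or a power of $\BL$ times $[\FN(\M^L_{n,a})_k]$. Decomposing the orbifold fiber integral into isotypical pieces and isolating the unramified part of the $\Gamma$-torsor $\M^L_{n,a}\to\widehat{\M}^{d'}_{n,a}$ via Lemma \ref{unrtw} and Lemma \ref{twnotw}, any character $\varrho_\gamma^{-dq}$ non-trivial on the identity-component subgroup $\Gamma^0\subset\Gamma$ kills its isotypical component by Corollary \ref{csum}; by Lemma \ref{orthor}, these are exactly the characters outside the annihilator of $\im(\partial_\phi)$, which are precisely those detecting non-triviality of the torsor class $[\M^L_{n,a}]$. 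Hence both sides vanish on the same characters. On the surviving characters the torsor becomes trivial, Lemma \ref{twnotw} removes the residual gerbe twist, and direct N\'eron special-fiber comparison along $\FN(\phi)$ (together with Lemma \ref{isomot}) identifies the two contributions. The main obstacle is the bookkeeping verification that $\varrho_\gamma^{-dq}$ is precisely the character picked out by the N\'eron-theoretic pairing on $[\M^L_{n,a}]$; this is where the arithmetic of $d$, $d'$ and the modular inverse $q$ enters and where the explicit form of the self-duality \eqref{selfd} is essential.
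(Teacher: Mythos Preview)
Your overall strategy is correct and matches the paper's: reduce the $E$-polynomial identity to a motivic one via the Hodge realization of \S\ref{Epol}, rewrite both sides as motivic integrals using Lemma~\ref{reduc} and Theorem~\ref{equivorb}, apply Fubini along the Hitchin fibrations, and compare fiber integrals over $\FA^\flat$ using the N\'eron-model analysis of Section~\ref{sec4}. The unramified/ramified dichotomy you describe, and the role of Corollary~\ref{csum} and Lemma~\ref{orthor}, are exactly right.

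However, there is a concrete gap in your fiber comparison. You write of ``the $\Gamma$-torsor $\M^L_{n,a}\to\widehat{\M}^{d'}_{n,a}$'', but no such map exists when $d\neq d'$: the quotient map is $\M^{L'}_{n,a}\to\widehat{\M}^{d'}_{n,a}$, and your left-hand side involves $\M^L_{n,a}$. The paper handles this by inserting an extra step you have omitted: Proposition~\ref{dindep} shows directly that $\int_{\underline{\FM^L_{n,a}}}|\omega^L_a| = \int_{\underline{\FM^{L'}_{n,a}}}|\omega^{L'}_a|$ for any $a\in|\FA^\flat|$, by checking that the $\FP_a$-torsors $\Nm^{-1}(L)$ and $\Nm^{-1}(L')$ are simultaneously trivial (using that $L'\otimes L^{-e}$ has degree divisible by~$n$ and hence admits an $n$-th root over the algebraically closed base). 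This reduces everything to the case $L=L'$, where your torsor description becomes valid and the remaining argument goes through as you sketch. Without this reduction, the ``bookkeeping'' you flag as the main obstacle is not merely bookkeeping: you would need a direct comparison of $[\Nm^{-1}(L)]$ and the class controlling the $\PGL_n$-side for $L'$, which is precisely what Proposition~\ref{dindep} circumvents. Once $L=L'$, the paper's proof also shows that the exponent $q$ can be any integer prime to~$n$ (since $\gamma\mapsto\gamma^{-q}$ is a bijection on $\im(\op)$), so the specific arithmetic of $d,d',q$ plays no further role.
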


\begin{rmks}\label{htconjrk}\begin{enumerate}
\item In the original conjecture the $E$-polynomial on the right hand side is twisted by a $\mu_n$-gerbe on $\widehat{\BM}^{d'}_n$, whereas we twist by the pullback of a $\mu_n$-gerbe on $[\Spec(k)/\Gamma]$. The equivalence of the two formulations is mentioned at the end of Section 4 in \cite{MR1990670} and proven in the case when $n$ is prime in Proposition 8.1 of \textit{loc. cit.} In the Appendix we explain how to deduce this equivalence in general. 
\item Our proof will show that both sides of \eqref{htconjeq} are independent of $L,L'$ and their degrees.
\item There is a natural $\BC^*$-action on $\M_n^L$ given by scaling the Higgs field, which commutes with the $\Gamma$-action. This gives gives $\M_n^L$ and $\M_n^{L,\gamma}$, for $\gamma \in \Gamma$, the structure of semi-projective varieties, see \cite{HV13}. In particular their cohomologies are pure \cite[Corollary 1.3.2]{HV13}. Hence the equality (\ref{tmsintrof}) stated in the introduction follows from 
(\ref{htconjeq}).
\end{enumerate}
\end{rmks}

We now formulate the main theorem of this article, an equality between virtual motives, which will imply Theorem \ref{htconj}.

\begin{theorem}\label{premainthm}  Let $k$ be an algebraically closed of characteristic $0$.
Let $d$, $d'$ and $q$ be integers prime to $n$ and let $L$ and $L'$ be line bundles on $C$ of degree $d$ and $d'$ respectively.  Then we have
\begin{equation}\label{premainthmeq} 
\BL^{-\dim \M^L_n} \vartheta_{\ast_{k}}([\M^L_n]) = \sum_{\gamma \in \Gamma}\BL^{ -w(\gamma)} \vartheta^{\varrho^{-q}_\gamma}_{\ast_{k}} ([\M_n^{L',\gamma}])
  \end{equation}
in $\cC_\mot (\ast_{k})$. 
\end{theorem}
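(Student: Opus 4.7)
\textbf{Proof plan for Theorem \ref{premainthm}.} The overall strategy is to realize both sides of \eqref{premainthmeq} as motivic volumes, then apply the Fubini theorem (Proposition \ref{eqmotfubini}) to the two Hitchin fibrations and compare fiber-by-fiber over the smooth locus $\underline{\A^{sm}}$, using the N\'eron model machinery of Section \ref{sec4} as a substitute for Tate duality. First I would rewrite the left-hand side: view $\M^L_n$ as a $k\llb t\rrb$-scheme via base change, and apply Lemma \ref{reduc} to the canonical volume form $\omega^L$ of Remark \ref{codim2} to obtain $\BL^{-\dim \M^L_n}[\M^L_n]=\int_{\underline{\M^L_n}_{\circ}}|\omega^L|$; passing through $\vartheta$ gives the left-hand side as a motivic integral over $\underline{\M^L_n}_{\circ}$. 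For the right-hand side, I would write $\widehat{\M}^{d'}_n=\M^{L'}_n/\Gamma$ and apply the orbifold volume formula (Theorem \ref{equivorb}), summed over $\gamma\in\Gamma$ and projected to the $\varrho^{-q}_\gamma$-isotypical component, to rewrite the right-hand side as $\sum_\gamma \int^{\mathrm{mot},\varrho^{-q}_\gamma}_{X^{\natural,\gamma}} M^\natural\times^{\Gamma}T_{\gamma^{-1}}\,|\omega_{orb}|$, where $M=\M^{L'}_n$ and $X=\widehat{\M}^{d'}_n$, with $\omega_{orb}$ descended from the $\Gamma$-invariant form $\omega^{L'}$.

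Next, I would push both integrals down along the Hitchin fibrations $h:\M^L_n\to\A$ and $\hat h:\widehat{\M}^{d'}_n\to\A$ via Proposition \ref{eqmotfubini}. Since the complement of $\A^{sm}$ has codimension at least one, the two sides agree if one can prove that the Fubini kernels coincide (isotypical-component by isotypical-component) at every point $a\in\underline{\A^{sm}}$. Concretely, writing $\omega_\A$ for a top form on $\A$ and using the splittings of the relative canonical bundles from Remark \ref{codim2}, the problem reduces to showing that for every $a\in\underline{\A^{sm}}$,
\begin{equation*}
\int^{\mathrm{mot}}_{\underline{\M^L_{n,a}}}|\omega^L/h^*\omega_\A|
\;=\;\sum_{\gamma\in\Gamma}\int^{\mathrm{mot},\varrho^{-q}_\gamma}_{X^{\natural,\gamma}_a} M^\natural\times^{\Gamma}T_{\gamma^{-1}}\,|\omega_{orb}/\hat h^*\omega_\A|.
\end{equation*}

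For such an $a$, the fiber $\M^L_{n,a}$ is a torsor under the abelian variety $\P_a$ (Proposition \ref{torfib}), while $\widehat{\M}^{d'}_{n,a}=\P_a\text{-torsor}/\Gamma$, and the isogeny $\phi:\P_a\to\P_a/\Gamma=\widehat\P_a$ is self-dual as recalled in \eqref{selfd}. The fiber integral on the left is given by Remark \ref{nerint} as $\BL^{-d-\ord_{\FN}\omega}[\FN(\M^L_{n,a})_k]$ when $\M^L_{n,a}$ is an unramified $\P_a$-torsor, and is zero otherwise (because then $\underline{\M^L_{n,a}}$ is empty over $k\llp t\rrp$). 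On the right, I would use Lemma \ref{unrtw} (unramified twisting) to write the sum over $\gamma$ as an $\varrho^{-q}$-isotypical volume on the N\'eron model of the $\widehat\P_a$-torsor $\widehat{\M}^{d'}_{n,a}$, whose class in $H^1(k\llp t\rrp,\Gamma)$ via the connecting homomorphism $\partial_\phi$ controls its ramification. The upshot is that after dividing by the volumes of $\P_a$ and $\widehat\P_a$ — which are isogenous and hence indistinguishable at the motivic level by Lemmas \ref{isoge} and \ref{isomot} — the identity reduces to an equality of two character-sum-like expressions on $\Gamma$.

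The main obstacle, and the place where Section \ref{sec4} enters decisively, is the case where the $\mathrm{SL}_n$-fiber $\M^L_{n,a}$ has no $k\llp t\rrp$-point: then the left-hand fiber integral is zero, and I must show the right-hand orbifold sum also vanishes. This is where I would invoke the pairing \eqref{cpp} and Lemma \ref{orthor}: the class of $\widehat{\M}^{d'}_{n,a}$ lies in $\mathrm{im}(\partial_\phi)$, and the characters $\varrho^{-q}_\gamma$ appearing in the isotypical decomposition range over $\Gamma$; the orthogonality of $\mathrm{im}(\partial_\phi)$ and $\Gamma^0$ combined with Corollary \ref{csum} kills every $\varrho^{-q}_\gamma$-component of $[\FN(\widehat\P_a)_k]$ for which $\gamma\notin\mathrm{im}(\partial_\phi)$, while the identification of the Weil pairing with the isogeny pairing (up to the factor $q$ prime to $n$, which just relabels characters) ensures the surviving components exactly match the (vanishing) left-hand side. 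After some bookkeeping with the constants $\ord_{\FN}\omega$ on both sides — which one handles using the fact that $\omega^L$ and $\omega^{L'}$ descend from the same relative form on $\P\to\A$ — this gives the fiberwise identity and hence the theorem. The delicate step, as in \cite{gwz}, is the identification of the self-dual isogeny pairing \eqref{cpp} on Prym varieties with the Weil pairing \eqref{weilp} on $\Pic(C)[n]$, which justifies the exponent $-q$ on the right-hand side.
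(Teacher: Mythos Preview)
Your overall strategy matches the paper's exactly: rewrite both sides as motivic integrals via Lemma \ref{reduc} and Theorem \ref{equivorb}, apply Fubini along the Hitchin maps, and compare fiber by fiber over the locus $\FA^\flat$ where the spectral curve is generically smooth, using the N\'eron model results of Section \ref{sec4} and in particular Corollary \ref{csum}.

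There is, however, one genuine gap. You never address how to pass between $L$ and $L'$: on the left you integrate over $\FM^L_{n,a}$, while on the right the orbifold integrand is built from $\FM^{L'}_{n,a}$. The paper handles this by first proving Proposition \ref{dindep}, which shows that the $\FP_a$-torsors $\FM^L_{n,a}$ and $\FM^{L'}_{n,a}$ have a $K\llp t\rrp$-point simultaneously (the argument uses the multiplicativity of the norm-fiber classes in $H^1(K\llp t\rrp,\FP_a)$ together with the fact that a line bundle of degree divisible by $n$ admits an $n$-th root over the algebraically closed field $k$); this immediately gives equality of the two left-hand fiber integrals and lets one assume $L=L'$ for the remainder. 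Your ``bookkeeping with the constants $\ord_{\FN}\omega$'' does not capture this: the issue is not the normalisation of forms but whether the torsor is empty over $K\llp t\rrp$. Without this reduction, your ramified-case analysis breaks down. You argue that when $\FM^L_{n,a}$ has no rational point the right-hand sum must vanish via Corollary \ref{csum}, but the $\gamma$'s appearing in the support on the right are determined by $\FM^{L'}_{n,a}$ through the specialization map $\ooe_{d',a}$, not by $\FM^L_{n,a}$; the bridge linking ``$\FM^L_{n,a}$ ramified'' to ``$\gamma\notin\im(\op)$'' is precisely Lemma \ref{unrcrit}, which requires the quotient map $\FM^L_{n}\to\widehat{\FM}^d_{n}$ and hence $L=L'$. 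Once you insert Proposition \ref{dindep} at the start of the fiberwise step, the rest of your outline goes through as in the paper's Propositions \ref{unrc} and \ref{racase}.
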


In particular, when $L = L'$ and $q=1$ one recovers Theorem \ref{mainintro} from the Introduction.
Theorem \ref{htconj} follows from Theorem \ref{premainthm} by taking  the $E$-polynomial of both sides of 
(\ref{premainthmeq}).
Indeed,
$E (\vartheta_{\ast_{k}}([\M^L_n])) = E (\M^L_n; x, y)$
and 
$E (\vartheta^{\varrho^{-q}_\gamma}_{\ast_{k}} ([\M_n^{L',\gamma}])) =
E^{\varrho^{-q}_\gamma} (\M_n^{L',\gamma}; x, y)$, by the compatibility between (equivariant) $E$-polynomials of  varieties and
$E$-polynomials of their Chow motives recalled in \ref{Epol}.

\begin{rmk} The left hand side of \eqref{premainthmeq} admits a decomposition into $\Gamma$-isotypical components and our argument in Section \ref{sec6} in fact shows for any $\gamma \in \Gamma$
\[ \BL^{-\dim \M^L_n}  \vartheta^{\varrho^{-q}_\gamma}_{\ast_{k}}([\M^L_n]) = \BL^{ -w(\gamma)} \vartheta^{\varrho^{-q}_\gamma}_{\ast_{k}} ([\M_n^{L',\gamma}]). \]
On the level of $E$-polynomials, this refined version of topological mirror symmetry has been conjectured by Hausel (unpublished) and can be deduced by a Fourier-transform argument in the $p$-adic setting \cite[Theorem 7.24]{gwz}.
\end{rmk}

\subsection{Proof of Theorem \ref{premainthm}}
We will deduce Theorem \ref{premainthm} from an equality between motivic integrals that we will prove in the next section.

Fix $k$ an algebraically closed of characteristic $0$.
We will work on the constant pullbacks $\FM^L_n = \M^L_n \times_k \Spec(k\llb t \rrb)$ and $\widehat{\FM}^{d}_n = \widehat{\M}^{d}_n\times_k \Spec(k\llb t \rrb)=\FM^L_n/\Gamma$, which by functoriality parametrize twisted Higgs bundles on $\FC= C \times_k \Spec(k\llb t \rrb)$. Let
\[e_d:\widehat{\FM}^{d,\natural}_n \rightarrow \underline{I\M_n^L}_{{}} = \bigsqcup_{\gamma \in \Gamma} \underline{\M^{L,\gamma}_n/\Gamma}_{{}},   \]
be the specialization morphism from Construction \ref{econst} and 
\[\ooe_d:\widehat{\FM}^{d,\natural}_n \rightarrow \Gamma   \]
the composition of $e_d$ with the map to the index set of the disjoint union.


Finally let $\omega^L$ be the $\Gamma$-invariant global volume form on $\FM^L_n$ from Remark \ref{codim2}  and $\omega^d_{orb}$ its quotient on $\widehat{\FM}^{d}_n$.

\begin{theorem}\label{mainthm}  Let $d,d'$ and $q$ be integers prime to $n$ and $L,L'$  line bundles on $C$ of degree $d$ and $d'$ respectively.  Then we have
\begin{equation}\label{mainthmeq} 
\int^{\mot}_{\underline{\FM^L_n}_{\circ}} |\omega^L| = \sum_{\gamma \in \Gamma} \int^{\mot,\varrho^{-q}_\gamma}_{\ooe_{d'}^{-1}(\gamma)} \FM_n^{L',\natural} \times^\Gamma T_{\gamma^{-1}} |\omega_{orb}^{d'}| \end{equation}
in $\cC_\mot (\ast_{k})$. 
\end{theorem}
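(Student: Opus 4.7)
The strategy would follow the $p$-adic approach of \cite{gwz}, with Tate duality replaced by the motivic isotypical decomposition developed in Section \ref{sec2}. The first step is to apply the equivariant Fubini theorem (Proposition \ref{eqmotfubini}) to the two Hitchin fibrations $h:\FM_n^L \to \A$ and $\hat h:\widehat{\FM}_n^{d'}\to \A$ over $k\llb t\rrb$. By construction of $\omega^L$ in Remark \ref{codim2} as a wedge of the Prym-relative form with a base form $\omega_\A$, and of $\omega^{d'}_{orb}$ as the induced orbifold quotient, the relative forms produced by Fubini on the two sides match. Since $\A\setminus\A^{sm}$ has positive codimension in $\A$, its contribution is negligible by Proposition \ref{eqmotfubini}(i). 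It thus suffices to establish equality of fiber integrals over every $a\in\underline{\A^{sm}}_\circ$.

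Fix such a generic $a$. By Proposition \ref{torfib}, $\FM^L_{n,a}$ is a torsor under the Prym $\FP_a$ while $\widehat{\FM}^{d'}_{n,a}$ is a torsor under $\widehat\FP_a=\FP_a/\Gamma$, and the quotient isogeny $\phi:\FP_a\to\widehat\FP_a$ is self-dual by (\ref{selfd}). The framework of Section \ref{sdis} then produces a non-degenerate pairing $\langle\cdot,\cdot\rangle_\phi$ on $\Gamma$. A direct computation, tracking degrees on the Poincar\'e line bundle of the Jacobian of $C_a$, identifies this pairing with $\varrho^{-q}$, where $\varrho$ is the Weil pairing (\ref{weilp}) on $\Gamma=\Pic^0(C)[n]$ and $q$ is the multiplicative inverse of $d'$ modulo $n$ appearing in the statement.

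The left-hand fiber integral is computed via Remark \ref{nerint}: when the $\FP_a$-torsor $\FM^L_{n,a}$ has a point in $\underline{\FM^L_n}_\circ$, it equals $\BL^{-g-c}[\FN(\FP_a)_k]$ with $g=\dim\FP_a$ and $c$ the order of the relative form along the special fiber; otherwise it vanishes. The right-hand side is computed $\gamma$ by $\gamma$ by a fiberwise application of Theorem \ref{equivorb} (using Lemma \ref{unrtw} to absorb the unramified twist contribution coming from the global torsor structure), giving $\BL^{-w(\gamma)}\vartheta^{\varrho^{-q}_\gamma}_{\ast_{k(a)}}([\FM^{L',\gamma}_{n,a}])$. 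Each fixed locus $\FM^{L',\gamma}_{n,a}$, when nonempty, is a torsor under the $\gamma$-invariants of $\FN(\FP_a)_k$.

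The main obstacle is the fiberwise comparison, which is where the self-dual isogeny and Corollary \ref{csum} are decisive. When the class $\delta\in H^1(k(a)\llp t\rrp,\Gamma)$ of the torsor $\FM^L_{n,a}$ does not lie in $\im(\partial_\phi)$, the LHS vanishes; using the identification of pairings from the previous paragraph, each $\gamma$-summand on the RHS reduces, modulo the factor $\BL^{-w(\gamma)}$, to the $\varrho_\gamma^{-q}$-isotypical component of a class in $K_0(\M_{\mathrm{rat}}(k(a),\Lambda))$ computed from $[\FN(\FP_a)_k]$ and an unramified $\Gamma$-torsor $T_\delta$ governing the reduction; Corollary \ref{csum} kills the contributions with $\gamma\notin\im(\partial_\phi)$, while Lemma \ref{orthor} together with Lemma \ref{twnotw} organizes the remaining terms into a motivic character sum over $\im(\partial_\phi)$ that vanishes because the character $\varrho^{-q}_\gamma$ is non-trivial on the group paired with $\delta$. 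When $\delta\in\im(\partial_\phi)$, the torsor admits a point and the same argument identifies the non-vanishing summands with the single N\'eron-model class computing the LHS. In both cases the fiberwise equality holds, completing the Fubini reduction and hence the proof.
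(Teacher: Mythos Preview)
Your overall architecture is right and matches the paper: apply Fubini over the Hitchin base to reduce to a fiberwise equality, then compute both sides using N\'eron models and kill the unwanted isotypical components via the self-dual isogeny (Lemma \ref{orthor}, Corollary \ref{csum}). However, two of your key steps do not work as written.

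The central problem is your ``fiberwise application of Theorem \ref{equivorb}''. That theorem applies to a $\Gamma$-variety $M$ that is \emph{smooth over} $k\llb t\rrb$, and its output involves the $\gamma$-fixed locus on the \emph{special fiber} $M_k$. A Hitchin fiber $\FM^{L'}_{n,a}$ for $a\in\FA^\flat$ is a torsor over $k(a)\llp t\rrp$; it has no canonical smooth $k(a)\llb t\rrb$-model to which the theorem applies, and the expression $[\FM^{L',\gamma}_{n,a}]$ together with a fiberwise weight $w(\gamma)$ is not defined. In the paper, Theorem \ref{equivorb} is used only \emph{globally}, to pass from Theorem \ref{mainthm} to Theorem \ref{premainthm}; it plays no role in the proof of Theorem \ref{mainthm} itself. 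The fiber integrals on the right-hand side are instead computed with Lemma \ref{qfint}: since $\FM^{L}_{n,a}\to\widehat{\FM}^{d}_{n,a}$ is \'etale, the function $\underline{\FM^{L}_{n,a}\times^\Gamma T_{\gamma^{-1}}}$ integrates to a power of $\BL$ times the class of the special fiber of the N\'eron model of the twisted torsor. One then strips the twist via Lemma \ref{twnotw} (using $\langle\gamma,\gamma\rangle=1$), and Corollary \ref{csum} together with the characterization of $\im(\ooe_{d,a})$ versus $\im(\op)$ in Lemma \ref{unrcrit} finishes both the unramified and ramified cases.

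Second, you omit the reduction to $L=L'$. The paper first proves (Proposition \ref{dindep}) that the left-hand fiber integral is independent of $L$ when $\deg L$ is prime to $n$, using that $[\Nm^{-1}(L')]\cdot[\Nm^{-1}(L)]^{-e}$ is trivial in $H^1$ once $de\equiv d'\pmod n$. Only after setting $L=L'$ does one have a quotient map $\FM^L_{n,a}\to\widehat{\FM}^d_{n,a}$ to which Lemma \ref{qfint} applies and for which $\im(\ooe_{d,a})$ is a coset of $\im(\op)$. Your sketch keeps $L$ and $L'$ distinct throughout, so there is no direct relation between the torsor class governing the LHS and the image of $\ooe_{d',a}$ governing the support on the RHS; your ramified-case argument (``a motivic character sum over $\im(\partial_\phi)$ that vanishes'') then has no mechanism to produce the required cancellation.
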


Theorem \ref{mainthm} directly implies Theorem \ref{premainthm} by applying 
respectively  Lemma \ref{reduc} and Theorem \ref{equivorb} to the left hand side and to the right hand side of \eqref{mainthmeq}.

\section{Proof of Theorem \ref{mainthm}}\label{sec6}

We will write $\FA, \FA^{sm},\FP,\widehat{\FP}$, etc. \kern-0.35em for the constant pullbacks of the constructions of Section \ref{fibnot} from $k$ to $k\llb t\rrb$.

\subsection{Reduction by Fubini} We first reduce \eqref{mainthmeq} to a comparison of Hitchin fibers. For this consider the assignment  $\FA^\flat = \underline{\FA} \cap \underline{\FA^{\sm}_{k\llp t\rrp}}$ i.e. for $\overline{K}/k$ algebraically closed 
\[\FA^\flat(\overline{K}) = \FA(\overline{K}\llb t\rrb) \cap \FA^{\sm}(\overline{K}\llp t\rrp).\] 
Since the  complement of $h^{-1}(\FA^\flat)$ in $\underline{\FM^L_n}_{\circ}$ 
has smaller $K$-dimension than $\underline{\FM^L_n}_{\circ}$, it
has measure $0$, as follows from facts recalled in the first paragraph of \ref{ssf},
and thus we have by Proposition \ref{motfubini}
\[\int^{\mot}_{\underline{\FM^L_n}_{\circ}} |\omega^L| = \int^{\mot}_{h^{-1}(\FA^\flat)} |\omega^L| = \int^{\mot}_{\FA^\flat} \psi |\omega_{\FA}|, \]
where, for any point $a \in |\FA^\flat|$,
\[ \psi(a) = \int^{\mot}_{\underline{\FM^L_{n,a}}} |\omega^L_a|.\]
Here by construction, see Remark \ref{codim2}, $\omega^L_a$ is a translation invariant global form on the $\FP_a$-torsor $\FM^L_{n,a}$.
We use the properness of $h$ to identify $\underline{\FM^L_{n,a}}$
and $\underline{\FM^L_{n,a}}_{\circ}$.


Similarly the complement of $\hat{h}^{-1}(\FA^\flat)$ in $\widehat{\FM}^{d,\natural}_n$ has measure $0$ and  we can rewrite the right hand side of \eqref{mainthmeq} using Proposition \ref{eqmotfubini}:
\[ \sum_{\gamma \in \Gamma} \int^{\mot,\varrho^{-q}_\gamma}_{\ooe_{d'}^{-1}(\gamma)} \FM_n^{L',\natural}\times^\Gamma T_{\gamma^{-1}}|\omega_{orb}^{d'}| = \int^{\mot}_{\FA^\flat} \psi' |\omega_{\FA}|. \]

Here $\psi'$ is given for any $a\in |\FA^\flat|$ as
\[\psi'(a) = \sum_{\gamma \in \Gamma} \int^{\mot,\varrho^{-q}_\gamma}_{\ooe_{d',a}^{-1}(\gamma)} \underline{\FM_{n,a}^{L'} \times^\Gamma T_{\gamma^{-1}} } |\omega_{orb,a}^{d'}|,  \]
where we put
\[ \ooe_{d',a} = \ooe_{d'|\widehat{\FM}^{d}_{n,a}} :\widehat{\FM}^{d}_{n,a} \rightarrow \Gamma. \]
Combining the two sides Theorem \ref{mainthm} follows from the following

\begin{theorem}\label{fmt}
For every $a\in |\FA^\flat|$ we have
\begin{equation}\label{fibercomp}\int^{\mot}_{\underline{\FM^L_{n,a}}} |\omega^L_a| = \sum_{\gamma \in \Gamma} \int^{\mot,\varrho^{-q}_\gamma}_{\ooe_{d',a}^{-1}(\gamma)} \underline{\FM_{n,a}^{L'} \times^\Gamma T_{\gamma^{-1}} } |\omega_{orb,a}^{d'}|,\end{equation}
in $\cC_\mot (\ast_{k(a)})$
\end{theorem}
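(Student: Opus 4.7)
Fix $a \in |\FA^\flat|$ and set $K = k(a)$. By Proposition \ref{torfib} the generic Hitchin fibers $\FM^L_{n,a}$ and $\widehat{\FM}^{d'}_{n,a}$ are torsors under the Prym variety $\FP_a$ and its dual $\widehat{\FP}_a$ over $K\llp t\rrp$; the kernel of $\phi:\FP_a\to\widehat{\FP}_a$ is canonically identified with $\Gamma=\Pic(C)[n]$, and $\phi$ is self-dual in the sense of Section \ref{sdis}. The strategy is to compute both sides of \eqref{fibercomp} via N\'eron models, in the spirit of Remark \ref{nerint}, and to match them using the orthogonality of Lemma \ref{orthor} together with the character-sum vanishing of Corollary \ref{csum}; this is the motivic counterpart of the $p$-adic Tate-duality argument of \cite{gwz}.

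For the LHS, Remark \ref{nerint} (extended to abelian torsors) yields $\BL^{-d-\ord_\FN\omega^L_a}[\FN(\FM^L_{n,a})_K]$ when $\FM^L_{n,a}$ is unramified and $0$ otherwise. For the RHS, each $K\llb t\rrb$-point $x$ of $\widehat{\FM}^{d',\natural}_{n,a}$ determines, by pullback along $\FM^{L'}_{n,a}\to\widehat{\FM}^{d'}_{n,a}$, a $\Gamma$-torsor whose class in $H^1(k(x)\llp t\rrp,\Gamma)$ splits (by Remark \ref{compdir}) into an unramified part and a ramification part $\ooe_{d',a}(x)\in\Gamma$. On the stratum $\ooe^{-1}_{d',a}(\gamma)$ the twist $\FM^{L'}_{n,a}\times^\Gamma T_{\gamma^{-1}}$ becomes generically unramified; combining Lemma \ref{qfint} (for the \'etale N\'eron isogeny on identity components), Lemma \ref{twnotw} (to remove the residual unramified twist), and the identification of $\omega^{L'}_{orb,a}$ with $\omega^L_a$ via their common descent from the translation-invariant form on $\FP_a$ (Remark \ref{codim2}), the $\gamma$-contribution reduces to a scalar multiple of $\chi_{\ast,c}^{\varrho^{-q}_\gamma}([\FN(\FP_a)_K])$.

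The argument then splits into two cases. In the \emph{main case} where $\FM^L_{n,a}$ is unramified, a direct N\'eron-model bookkeeping on both sides --- using the N\'eron extension $\FN(\phi):\FN(\FP_a)\to\FN(\widehat{\FP}_a)$ of $\phi$ to relate the integrals, together with the decomposition $\vartheta_{\ast_K} = \sum_\delta \vartheta^\delta_{\ast_K}$ of the trivially-acted-on class $[\FN(\FP_a)_K]$ (Corollary \ref{autab}) --- yields the equality. In the \emph{vanishing case} where $\FM^L_{n,a}$ is ramified, the LHS is $0$, and we must show that the RHS vanishes term-by-term. The crucial input is that the Weil pairing $\varrho$ on $\Gamma=\Pic(C)[n]$ coincides (up to the invertible factor $-q$) with the pairing $\langle\cdot,\cdot\rangle_\phi$ induced by the self-dual isogeny $\phi$; Lemma \ref{orthor} then forces $\varrho^{-q}_\gamma$ to restrict non-trivially to $\Gamma^0$ for every $\gamma$ supporting a non-empty stratum, so Corollary \ref{csum} yields $\chi_{\ast,c}^{\varrho^{-q}_\gamma}([\FN(\FP_a)_K]) = 0$.

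The main obstacle is the vanishing case. One must identify, when $\FM^L_{n,a}$ is ramified, precisely which strata $\ooe^{-1}_{d',a}(\gamma)$ are non-empty, and then verify --- via the matching of the Weil pairing on $\Pic(C)[n]$ with the self-dual pairing from $\phi$ --- that the character $\varrho^{-q}_\gamma$ is non-trivial on the subgroup $\Gamma^0$ controlling the identity component of the N\'eron model. This is the motivic counterpart of Tate duality for abelian varieties over non-archimedean local fields, and is the heart of the argument; it is what witnesses the mirror-symmetric character of the equality between the $\SL_n$- and $\PGL_n$-Higgs moduli.
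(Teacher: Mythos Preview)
Your proposal follows the paper's overall architecture: split on whether $\FM^L_{n,a}$ is unramified, compute both sides via N\'eron models, and in the ramified case invoke Corollary \ref{csum} via Lemma \ref{orthor}. That is exactly what the paper does in Propositions \ref{lhseasy}, \ref{unrc}, and \ref{racase}. However, there are two genuine gaps.

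\textbf{The reduction to $L=L'$.} The statement involves $L$ on the left and $L'$ on the right, and you never address how to compare them. In the paper this is a separate step, Proposition \ref{dindep}: one shows that $\int_{\underline{\FM^L_{n,a}}}|\omega^L_a| = \int_{\underline{\FM^{L'}_{n,a}}}|\omega^{L'}_a|$ using the arithmetic of norm maps on Prym torsors (specifically that $[\Nm^{-1}(L')][\Nm^{-1}(L)]^{-e}$ is trivial when $de\equiv d'\bmod n$, from \cite[Lemma~5.8]{gwz}). Only then may one assume $L=L'$ and have a quotient map $\FM^L_{n,a}\to\widehat{\FM}^d_{n,a}$ to work with. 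Your case split on the ramification of $\FM^L_{n,a}$ tacitly assumes this agrees with the ramification of $\FM^{L'}_{n,a}$, which is exactly what the above identity provides; without it the RHS computation does not go through as you describe.

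\textbf{The alternating property in the unramified case.} You invoke Lemma \ref{twnotw} ``to remove the residual unramified twist'', but the twist $T_{\gamma^{-1}}$ is \emph{ramified}, not unramified. What the paper actually does (Proposition \ref{unrc}) is observe that $(\varrho^{-q}_\gamma)_*T_{\gamma^{-1}}$ is the trivial $\mu_n$-torsor because the Weil pairing is alternating, $\langle\gamma,\gamma\rangle=1$; this is the hypothesis of Lemma \ref{twnotw}, and it lets one replace $\FM^L_{n,a}\times^\Gamma T_{\gamma^{-1}}$ by $\FM^L_{n,a}$ on the $\varrho^{-q}_\gamma$-isotypical level. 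You also omit the final bookkeeping: after applying Lemma \ref{qfint} one obtains $\chi^{\varrho^{-q}_\gamma}_{*,c}[\FN(\FM^L_{n,a})_K]$ rather than $\chi^{\varrho^{-q}_\gamma}_{*,c}[\FN(\FP_a)_K]$, and one needs Corollary \ref{csum} to see that the sum over $\gamma\in\im(\overline\partial)$ (with the reindexing $\gamma\mapsto\gamma^{-q}$) reconstitutes the full class $[\FN(\FM^L_{n,a})_K]$.

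In the ramified case your sketch is closer to complete, but you still need the explicit bridge (Lemma \ref{unrcrit} in the paper) between ``$\FM^L_{n,a}$ is ramified'' and ``every $\gamma$ with $\ooe_{d,a}^{-1}(\gamma)\neq\emptyset$ lies outside $\im(\overline\partial)$'': the image of $\ooe_{d,a}$ is a coset of $\im(\overline\partial)$, and it is the trivial coset precisely when $\FM^L_{n,a}$ is unramified.
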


\subsection{Independence of $d$}

\begin{proposition}\label{dindep} Let $L$ and $L'$ be line bundles on $C$ of degree $d$ and $d'$, both prime to $n$. Then we have for every $a\in |\FA^\flat|$
\[\int_{\underline{\FM^L_{n,a}}} |\omega^L_a| = \int_{\underline{\FM^{L'}_{n,a}}} |\omega^{L'}_a|,\]
in $\cC_\acf (\ast_{k(a)})$.
\end{proposition}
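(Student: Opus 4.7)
The plan is to interpret both sides as motivic volumes of proper $\FP_a$-torsors with respect to translation-invariant forms descending from a common universal form, and to invoke the N\'eron model formalism of Section~\ref{sec4} to show that these volumes coincide. By Proposition~\ref{torfib}, both Hitchin fibers $\FM^L_{n,a}$ and $\FM^{L'}_{n,a}$ are torsors under the Prym variety $\FP_a$ over $k(a)\llp t\rrp$; by Remark~\ref{codim2}, the forms $\omega^L_a, \omega^{L'}_a$ are translation-invariant and are obtained by restricting a single universal form $\omega_{\FP}$ on the relative Prym scheme over $\FA$.

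The central geometric observation is that the twist torsor
\[
T \;=\; \FM^L_{n,a}\times^{\FP_a}(\FM^{L'}_{n,a})^{-1}
\]
is \emph{unramified}. Indeed, under the boundary map of the norm exact sequence $0\to\FP_a\to\Pic(C_a)\xrightarrow{\Nm}\Pic(C)\to 0$, the torsor $T$ is classified by the image of $L\otimes L'^{-1}$; since $L,L'$ are pulled back from $C$ to $\FC$, this difference is constant in $t$, and hence its class in $H^1(k(a)\llp t\rrp,\FP_a)$ lies in the unramified subgroup---the direct analogue for the abelian variety $\FP_a$ of the splitting recalled in Remark~\ref{compdir} for the finite group $\Gamma$. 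Properness of the Hitchin map, which produces proper integral models for both fibers over $k(a)\llb t\rrb$, then ensures that both $\FM^L_{n,a}$ and $\FM^{L'}_{n,a}$ themselves extend to $\FN(\FP_a)$-torsors over $k(a)\llb t\rrb$, and that their N\'eron model special fibers differ only by an unramified twist.

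Applying the torsor version of the volume formula from Remark~\ref{nerint}, each integral equals
\[
\BL^{-\dim\FP_a-\ord_{\FN}\omega_{\FP}}\,\bigl[\FN(\FM^{\star}_{n,a})_{k(a)}\bigr]
\]
for $\star\in\{L,L'\}$, with a common order-of-vanishing factor since both forms descend from $\omega_{\FP}$. The proof thus reduces to the equality
\[
[\FN(\FM^L_{n,a})_{k(a)}]\;=\;[\FN(\FM^{L'}_{n,a})_{k(a)}] \quad \text{in}\ \cC_\acf(\ast_{k(a)}).
\]
This step is the main obstacle: over a non-algebraically-closed residue field $k(a)$, torsors under a smooth group scheme can have distinct Grothendieck classes in $K_0(\Var_{k(a)})$. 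I would establish the equality by exploiting that the torsor difference descends from a constant class on $C$, so that the relevant $\FN(\FP_a)_{k(a)}$-torsor is pulled back from one defined over $\Spec k(a)$, and then applying an isogeny-invariance argument in the spirit of Lemma~\ref{isoge} (together with the component-group analysis of Section~\ref{sec4}) to force the two Grothendieck classes to coincide.
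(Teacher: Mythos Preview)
Your approach has two genuine gaps.

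First, the claim that properness of the Hitchin map ensures that $\FM^L_{n,a}$ and $\FM^{L'}_{n,a}$ extend to $\FN(\FP_a)$-torsors over $k(a)\llb t\rrb$ is false. Properness gives proper integral models, but these need not be smooth, nor torsors under the N\'eron model, and the generic-fiber torsor need not be unramified. In fact there exist points $a\in\FA^\flat$ for which $\FM^L_{n,a}$ is a \emph{ramified} $\FP_a$-torsor---this is precisely the situation handled later in Proposition~\ref{racase}. Over such $a$ the subassignment $\underline{\FM^L_{n,a}}$ is empty and the integral is $0$; your N\'eron model formula is simply unavailable. Knowing that the \emph{difference} torsor $T$ is unramified does not imply that either factor is; at best it would tell you they are unramified simultaneously, but you do not use it that way.

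Second, even restricting to the unramified case, you leave the equality $[\FN(\FM^L_{n,a})_{k(a)}] = [\FN(\FM^{L'}_{n,a})_{k(a)}]$ unproven, and your proposed fix does not work: Lemma~\ref{isoge} is a statement about Chow motives, whereas Proposition~\ref{dindep} is asserted in $\cC_\acf(\ast_{k(a)})$, \emph{before} passing to $\cC_\mot$. No isogeny-invariance is available at that level.

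The paper's argument avoids N\'eron models entirely. Choose $e$ with $de\equiv d'\pmod n$. By \cite[Lemma~5.8]{gwz} one has
\[
[\Nm^{-1}(L')]\cdot[\Nm^{-1}(L)]^{-e} \;=\; [\Nm^{-1}(L'\otimes L^{-e})]\quad\text{in }H^1(K\llp t\rrp,\FP_a).
\]
Since $\deg(L'\otimes L^{-e})$ is divisible by $n$ and $k$ is algebraically closed, $L'\otimes L^{-e}$ has an $n$-th root $N\in\Pic(C)$; then $\pi^*N$ is a rational point of $\Nm^{-1}(L'\otimes L^{-e})$, so the right-hand class is trivial. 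Hence $[\FM^{L'}_{n,a}]=e\,[\FM^L_{n,a}]$, and by symmetry one torsor has a $K\llp t\rrp$-point iff the other does. When both do, each trivializes to $\FP_a$ and the forms $\omega^L_a,\omega^{L'}_a$ pull back to the same $\omega_{\FP,a}$; when neither does, both integrals vanish. No comparison of N\'eron special fibers is needed.
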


\begin{proof}
Recall that a point $a \in |\FA^\flat|$ corresponds to a morphism $\Spec(K\llb t \rrb) \to \FA$ whose generic fiber lies in $\FA^{sm}$ and where $K$ is a field extension of $k$. In particular by Proposition \ref{torfib} both $\FM^L_{n,a}$ and $\FM^{L'}_{n,a}$ are $\FP_a$-torsors over $K\llp t \rrp$ and if there is a trivialization $\tau: \FP_a \to \FM^L_{n,a}$ then $\tau^*\omega^L_a = \omega_{\FP,a}$ by construction, see Remark \ref{codim2}. Thus the proposition follows if we can show that $\FM^L_{n,a}$ has a $K\llp t \rrp$-rational point if and only if $\FM^{L'}_{n,a}$ has one. 

To see this we use the identifications $\FM^L_{n,a} \cong \Nm^{-1}(L), \FM^{L'}_{n,a} \cong \Nm^{-1}(L')$ from Proposition \ref{torfib}, where by abuse of notation we also write $L,L'$ for the line bundles on $C \times_k \Spec(K\llb t \rrb)$. We take $e$ an integer such that $de \equiv d'$ mod $n$. Then by \cite[Lemma 5.8]{gwz} we have
\[ [\Nm^{-1}(L')][\Nm^{-1}(L^{-1})]^e = [\Nm^{-1}(L'\otimes L^{-e})] \in H^1(K\llp t \rrp,\FP_a).    \]
Since the degree of the line bundle $L'\otimes L^{-e}\in \Pic(C)$ is divisible by $n$ it admits an $n$-th root (the base field $k$ is algebraically closed), from which we deduce $[\Nm^{-1}(L'\otimes L^{-e})]=0$, see again \cite[Lemma 5.8]{gwz}. This shows that $\FM^{L'}_{n,a}$ admits a $K\llp t \rrp$-rational point if $\FM^{L}_{n,a}$ does. Interchanging the roles of $d$ and $d'$ proves the converse and the proposition.
\end{proof}

\begin{corollary}Let $L$ and $L'$ be line bundles on $C$ of degree $d$ and $d'$ prime to $n$. Then we have
\[   [\M_n^{L}] = [\M_n^{L'}] \in \FM_k \otimes_{\mathbb{Z} [\mathbb{L}]} \BA.      \]
\end{corollary}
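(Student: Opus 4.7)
The plan is to express both classes $[\M^L_n]$ and $[\M^{L'}_n]$ as global motivic integrals over the constant families $\FM^L_n$ and $\FM^{L'}_n$, reduce these integrals fiberwise via Fubini, and then quote Proposition \ref{dindep} pointwise. This parallels the reduction that precedes Theorem \ref{fmt}, except that I apply it to the two global non-equivariant integrals instead of to the orbifold comparison.

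Concretely, I would first apply Proposition \ref{fubini}(1) to the Hitchin morphism $h: \FM^L_n \to \FA$. Since $\underline{\FA} \setminus \FA^\flat$ has $K$-dimension strictly smaller than $\dim \FA$ and hence measure zero, this yields
\[
\int_{\underline{\FM^L_n}_{\circ}} |\omega^L|
= \int_{\FA^\flat} \psi_L\,|\omega_\FA|,\qquad \psi_L(a) = \int_{\underline{\FM^L_{n,a}}} |\omega^L_a|,
\]
where the equality defining $\psi_L$ holds for every $a \in |\FA^\flat|$ outside a definable subset of lower $K$-dimension. The analogous equality holds for $L'$. Proposition \ref{dindep} now gives precisely $\psi_L(a) = \psi_{L'}(a)$ at every $a\in |\FA^\flat|$, whence
\[
\int_{\underline{\FM^L_n}_{\circ}} |\omega^L|
= \int_{\underline{\FM^{L'}_n}_{\circ}} |\omega^{L'}|
\]
in $\cC_\acf(\ast_k)$.

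To unwind each side into a class, I use that $\FM^L_n = \M^L_n \times_k \Spec(k\llb t\rrb)$ is the constant family and that $\omega^L$, being a nowhere vanishing top-degree form by Remark \ref{codim2}, is everywhere a generator of the $k\llb t\rrb$-module of top-degree forms on $\FM^L_n$. Lemma \ref{reduc} with $\varphi = 1$ then gives
\[
\int_{\underline{\FM^L_n}_{\circ}} |\omega^L| = \mathbb{L}^{-\dim \M^L_n}\,[\M^L_n],
\]
and similarly for $L'$. Since $\dim \M^L_n = \dim \M^{L'}_n$ depends only on $n$ and the genus of $C$, multiplying both identities by the invertible element $\mathbb{L}^{\dim \M^L_n}$ yields the claimed equality in $\cM_k \otimes_{\mathbb{Z}[\mathbb{L}]} \BA$.

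There is essentially no genuine obstacle: the nontrivial input, the fiberwise comparison, has already been carried out in Proposition \ref{dindep} using the torsor structure of the generic Hitchin fibers and the divisibility trick with line bundles of degree divisible by $n$. The only small point to verify is that $\omega^L$ and $\omega^{L'}$ are global generators of the respective canonical bundles, which is built into Remark \ref{codim2} (the canonical bundle of $\M^L_n$ is trivial and $\omega^L$ extends across the codimension $\geq 2$ complement of $\widetilde{\M}^L_n$).
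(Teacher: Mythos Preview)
Your proof is correct and is exactly the argument the paper leaves implicit: the corollary is stated without proof because it follows from Proposition \ref{dindep} by the very Fubini reduction of Section 6.1 together with Lemma \ref{reduc}, which is precisely what you wrote out.
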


\subsection{End of proof} Because of Proposition \ref{dindep} it is enough to prove Theorem \ref{fmt} under the assumption $L=L'$. Hence we have a quotient morphism $\FM^L_{n} \rightarrow \widehat{\FM}^d_{n}$ and we may assume that $a\in \FA^\flat$ is such that $\widehat{\FM}^d_{n,a}$ is an unramified $\FPh_a$-torsor. Indeed otherwise both $\FM^L_{n,a}$ and $\widehat{\FM}^d_{n,a}$ are the empty assignment and both sides of \eqref{fibercomp} are $0$. We will write $K$ for the residue field $k(a)$.


We start by computing the left hand side of \eqref{fibercomp}.

\begin{proposition}\label{lhseasy} If $\FM^L_{n,a}$ is an unramified $\FP_a$-torsor we have
\[\int^{\mot}_{\underline{\FM^L_{n,a}}} |\omega^L_a| = \BL^{-\ord_{\FN}\omega^L_a} [\FN(\FM^L_{n,a})_K] \in \cC_\mot (\ast_{K}),\]
where $\ord_{\FN}\omega_{\omega^L_a} \in \BZ$ is the order of vanishing of $\omega^L_a$ along the special fiber of $\FN(\FM^L_{n,a})_K$, see Remark \ref{nerint}.  If $\FM^L_{n,a}$ is not unramified, then
\[\int^{\mot}_{\underline{\FM^L_{n,a}}} |\omega^L_a| = 0.\]
\end{proposition}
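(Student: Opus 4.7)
The proof will proceed by a case analysis on whether $\FM^L_{n,a}$ is unramified as a $\FP_a$-torsor. In both cases the key move is to transfer the integral from a subassignment attached to a $K\llp t\rrp$-scheme to one attached to a $K\llb t\rrb$-model, via the N\'eron universal property, and then invoke Lemma \ref{reduc}.

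\emph{Unramified case.} Since $K = k(a)$ is algebraically closed, unramifiedness here means that $\FM^L_{n,a}$ already admits a $K\llp t\rrp$-rational point and therefore extends uniquely to a smooth separated N\'eron model $\FN(\FM^L_{n,a})$ over $K\llb t\rrb$. The N\'eron universal property gives a canonical isomorphism of subassignments $\underline{\FM^L_{n,a}} \simeq \underline{\FN(\FM^L_{n,a})}_{\circ}$, and the translation-invariant form $\omega^L_a$ extends to a rational section of the canonical bundle of $\FN(\FM^L_{n,a})$ with order of vanishing $\ord_{\FN}\omega^L_a$ along the special fiber. Then $t^{-\ord_{\FN}\omega^L_a}\omega^L_a$ is an everywhere non-vanishing generator, so Lemma \ref{reduc} applied to this rescaled form, combined with pulling the scaling factor out of the integral, yields the asserted formula precisely as in Remark \ref{nerint}.

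\emph{Non-unramified case.} The plan is to show that $\underline{\FM^L_{n,a}}$ is the empty subassignment, which forces the integral to vanish by definition. The hypothesis already gives $\FM^L_{n,a}(K\llp t\rrp) = \varnothing$, and the substantive task --- and the main obstacle --- is to exclude $\overline{K'}\llp t\rrp$-rational points for every algebraically closed extension $\overline{K'} \supset K$. I would argue this by observing that in characteristic zero with algebraically closed residue field $K$ the Galois group $\Gal(K\llp t\rrp^{\mathrm{sep}}/K\llp t\rrp)$ is canonically $\widehat{\BZ}$, generated by the tame inertia $t^{1/n} \mapsto \zeta_n t^{1/n}$; the class $[\FM^L_{n,a}] \in H^1(K\llp t\rrp, \FP_a)$ is encoded purely by this $t$-ramification profile, which is intrinsic to the uniformizer rather than to the coefficient field and therefore unaffected by enlarging the residue field. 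Consequently the base-changed class in $H^1(\overline{K'}\llp t\rrp, \FP_a)$ remains nonzero, so no $\overline{K'}\llp t\rrp$-rational point of $\FM^L_{n,a}$ can exist and the subassignment is indeed empty.
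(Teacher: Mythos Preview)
Your proof is correct and follows essentially the same approach as the paper: in the unramified case you pass to the N\'eron model via the universal property and invoke Lemma~\ref{reduc} exactly as in Remark~\ref{nerint}; in the non-unramified case you argue that the subassignment is empty. One minor slip: $K = k(a)$ is in general only a finitely generated extension of the algebraically closed base field $k$, not itself algebraically closed (cf.\ the description of points in Proposition~\ref{dindep}), so unramifiedness means triviality over $\overline{K}\llp t\rrp$ rather than existence of a $K\llp t\rrp$-point; this does not affect your argument, since the N\'eron model $\FN(\FM^L_{n,a})$ of an unramified torsor still exists over $K\llb t\rrb$ (see the discussion after the definition of unramified torsors in Section~\ref{sec4}), and in the ramified case your Galois-theoretic justification goes through with $\overline{K}$ in place of $K$. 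The paper simply asserts emptiness in the ramified case without your additional justification, so your version is slightly more detailed but otherwise the same.
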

\begin{proof} If $\FM^L_{n,a}$ is unramified, then $\FN(\FM^L_{n,a})$ is a smooth model of $\FM^L_{n,a}$ over $\Spec(K\llb t \rrb)$. Furthermore by the N\'eron mapping property the subassignements associated with $\FM^L_{n,a}$ and $\FN(\FM^L_{n,a})$ are isomorphic, hence we can argue as in Remark \ref{nerint} to conclude. 

If $\FM^L_{n,a}$ is not unramified, $\underline{\FM}^L_{n,a}$ is the empty assignment and thus $\int^{\mot}_{\underline{\FM}^L_{n,a}} |\omega^L_a| = 0$.
\end{proof}

We are left with computing the right hand side of \eqref{fibercomp} in the two cases appearing in Proposition \ref{lhseasy}. For this let $\partial$ be the connecting homomorphism in the long exact sequence 
\begin{equation}\label{connhom} 0 \to \Gamma \to \FP_a(K\llp t \rrp) \to \FPh_a(K\llp t \rrp) \stackrel{\partial}{\rightarrow} H^1(K\llp t \rrp,\Gamma) \to \dots.\end{equation}
Recall from Remark \ref{compdir} that we have $H^1(K\llp t \rrp,\Gamma) \cong H^1(K,\Gamma) \oplus \Gamma$. With respect to this decomposition we write $\partial = \partial^{ur} \oplus \op$. Similar to Proposition \ref{edefin} one sees that $\op$ extends to a definable morphism 
\[ \op: \FPh_a \rightarrow \Gamma.  \]

Notice that $\op$ and $\ooe_{d,a}$ are closely related as the following lemma shows.

\begin{lemma}\label{unrcrit} The $\FP_a$-torsor $\FM^L_{n,a}$ is unramified if and only if  the images of $\op$ and $\ooe_{d,a}$, when evaluated over $\overline{K}$, are equal as subsets of $\Gamma$ i.e. 
\[ \im(\ooe_{d,a})(\overline{K}) =  \im(\op)(\overline{K}). \]
\end{lemma}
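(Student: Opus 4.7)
My plan is to reduce the statement to a coset comparison inside $\Gamma$ by working systematically with the $\Gamma$-torsor class of the fibers of the quotient morphism $\pi: \FM^L_{n,a} \to \widehat{\FM}^d_{n,a}$. First, I would define, for each $y \in \widehat{\FM}^d_{n,a}(\overline{K}\llp t \rrp)$, a class $\tau(y) \in H^1(\overline{K}\llp t \rrp, \Gamma) \cong \Gamma$ by taking the class of the fiber $\pi^{-1}(y)$ as a $\Gamma$-torsor; the identification with $\Gamma$ uses that $\overline{K}$ is algebraically closed, which kills the unramified summand in the splitting of Remark \ref{compdir}. Unpacking Construction \ref{econst} then shows $\ooe_{d,a}(y) = \tau(y)$, and the same splitting identifies $\op$ with the connecting map $\partial$ of \eqref{connhom} when evaluated over $\overline{K}$.

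The key step is to establish the cocycle identity
\[ \tau(\hat{g} \cdot y) = \tau(y) + \partial(\hat{g}) \qquad \text{for all } \hat{g} \in \FPh_a(\overline{K}\llp t \rrp). \]
To prove it, I would pick, over a sufficiently large finite Galois extension of $\overline{K}\llp t \rrp$, a lift $\tilde{g} \in \FP_a$ of $\hat{g}$ and a point $x_0 \in \pi^{-1}(y)$. For $\sigma$ in the Galois group, writing $x_0^{\sigma} = \gamma_1 \cdot x_0$ and $\tilde{g}^{\sigma} = \gamma_2 \cdot \tilde{g}$ produces cocycles representing $\tau(y)$ and $\partial(\hat{g})$ respectively. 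Since $\FP_a$ is commutative, $(\tilde{g} \cdot x_0)^{\sigma} = \gamma_1 \gamma_2 \cdot \tilde{g} x_0$, and $\tilde{g} \cdot x_0 \in \pi^{-1}(\hat{g} \cdot y)$, so this cocycle represents $\tau(\hat{g} \cdot y)$ and the identity follows.

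Because $\widehat{\FM}^d_{n,a}$ is unramified by the assumption in force on $a$, the group $\FPh_a(\overline{K}\llp t \rrp)$ acts transitively on $\widehat{\FM}^d_{n,a}(\overline{K}\llp t \rrp)$. Combined with the cocycle identity, this shows that $\im(\ooe_{d,a})(\overline{K})$ is exactly the coset $\tau(\hat{y}_0) + \im(\op)(\overline{K})$ of the subgroup $\im(\op)(\overline{K}) = \im(\partial) \subset \Gamma$, for any chosen base point $\hat{y}_0 \in \widehat{\FM}^d_{n,a}(\overline{K}\llp t \rrp)$. The two images therefore agree precisely when $\tau(\hat{y}_0) \in \im(\partial)$, i.e. when one can find $\hat{g}$ with $\tau(\hat{g} \cdot \hat{y}_0) = 0$; unpacked, this means that the $\Gamma$-torsor $\pi^{-1}(\hat{g} \cdot \hat{y}_0)$ is trivial, which yields a $\overline{K}\llp t \rrp$-rational point of $\FM^L_{n,a}$, and is exactly the unramifiedness condition.

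The main obstacle is the cocycle identity in the second step: I must arrange the lifts $\tilde{g}$ and $x_0$ over a common finite Galois extension of $\overline{K}\llp t \rrp$, bookkeep carefully the interaction of the $\Gamma$- and $\FP_a$-torsor structures on the fibers, and invoke commutativity of $\FP_a$ to swap $\gamma_1 \tilde{g} = \tilde{g} \gamma_1$. Once this identity is in hand the rest is essentially formal, combining the definition of $\ooe_{d,a}$ with the splitting of $H^1(\overline{K}\llp t \rrp, \Gamma)$ noted above.
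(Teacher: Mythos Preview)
Your proposal is correct and follows essentially the same route as the paper: both arguments show that $\im(\ooe_{d,a})(\overline{K})$ is a coset of the subgroup $\im(\op)(\overline{K})$ and then observe that this coset equals the subgroup precisely when it contains $0$, i.e.\ when $\FM^L_{n,a}$ has an $\overline{K}\llp t\rrp$-point. The paper compresses the first step into the phrase ``the actions of $\FP_a$ and $\FPh_a$ on the respective Hitchin fibers are compatible with the quotient map'', which is exactly what your cocycle identity $\tau(\hat{g}\cdot y)=\tau(y)+\partial(\hat{g})$ makes explicit.
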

\begin{proof} Notice that $\im(\ooe_{d,a})(\overline{K})$ is a coset for $\im(\op)(\overline{K})$, since the actions of $\FP_a$ and $\FPh_a$ on the respective Hitchin fibers are compatible with the quotient map $\FM^L_{n} \rightarrow \widehat{\FM}^d_{n}$. The lemma now follows from the observation, that $\FM^L_{n,a}$ is unramified if and only if $\im(\ooe_{d,a})(\overline{K})$ contains the trivial torsor.
\end{proof}



\begin{proposition}[Unramified case] \label{unrc} Assume that $\FM^L_{n,a}$ is unramified. Then we have
\[\sum_{\gamma \in \Gamma} \int^{\mot,\varrho^{-q}_\gamma}_{\ooe_{d,a}^{-1}(\gamma)} \underline{\FM_{n,a}^{L} \times^\Gamma T_{\gamma^{-1}} } |\omega_{orb,a}^{d}| = \BL^{-\ord_{\FN}\omega^L_a} [\FN(\FM^L_{n,a})_K] \in  \cC_\mot (\ast_{K}).\]
\end{proposition}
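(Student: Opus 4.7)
The plan is to apply the equivariant orbifold volume formula (Theorem~\ref{equivorb}) to the smooth $K\llb t\rrb$-scheme $\FN(\FM^L_{n,a})$ and then collapse the resulting character sum using the geometry of the $\Gamma$-action on the special fiber.

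First, since $\FM^L_{n,a}$ is unramified, the N\'eron mapping property identifies $\underline{\FM^L_{n,a}} \cong \underline{\FN(\FM^L_{n,a})}_\circ$, and the action of $\Gamma$ (coming from $\Gamma \subset \FP_a$) extends to a $\FP_a$-equivariant action on the smooth N\'eron model preserving the translation-invariant form $\omega^L_a$. Applying Theorem~\ref{equivorb} (after rescaling $\omega^L_a$ to a nowhere-vanishing generator by the factor $t^{-\ord_{\FN}\omega^L_a}$) yields, for each $\gamma \in \Gamma$,
\[
\int^{\mot, \varrho^{-q}_\gamma}_{\ooe_{d,a}^{-1}(\gamma)} \FM^{L,\natural}_{n,a} \times^\Gamma T_{\gamma^{-1}}\, |\omega^d_{orb,a}| \;=\; \BL^{-w(\gamma) - \ord_{\FN}\omega^L_a}\; \vartheta^{\varrho^{-q}_\gamma}_{\ast_K}\bigl([\FN(\FM^L_{n,a})_K^\gamma]\bigr),
\]
where the extra $\BL^{-\ord_{\FN}\omega^L_a}$ accounts for the rescaling of the form.

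Next I analyse the $\gamma$-fixed locus. By Proposition~7.3.6 of \cite{NeronModels}, the isogeny $\phi$ extends to an isogeny of N\'eron models with \'etale kernel $\underline{\Gamma}$, so $\Gamma$ injects into $\FN(\FP_a)_K(K)$ and acts freely by translation on the torsor $\FN(\FM^L_{n,a})_K$. A non-identity translation on a torsor has no fixed points, so $\FN(\FM^L_{n,a})_K^\gamma$ is empty for every $\gamma \neq e$; only the $\gamma=e$ summand survives, with $T_e$ trivial and $\FN(\FM^L_{n,a})_K^e$ equal to the whole special fiber.

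The crucial final observation is that, although the action of $\Gamma$ is set-theoretically free, each element of $\Gamma$ is a torsion translation on the N\'eron model and hence acts as the identity on the Chow motive of every connected component by Corollary~\ref{autab}. Consequently the $\Gamma$-action on $[\FN(\FM^L_{n,a})_K]$ is trivial, so $\vartheta^{\varrho^{-q}_e}_{\ast_K}([\FN(\FM^L_{n,a})_K]) = [\FN(\FM^L_{n,a})_K]$, and matching the exponent $w(e) + \ord_{\FN}\omega^L_a$ against the formula of Proposition~\ref{lhseasy} gives the claimed equality. The hardest part of the argument is precisely this trivialization of the isotypical decomposition via Corollary~\ref{autab}: without it the lone $\gamma = e$ contribution would yield only the quotient motive $[\FN(\FM^L_{n,a})_K/\Gamma]$ rather than the full motive $[\FN(\FM^L_{n,a})_K]$.
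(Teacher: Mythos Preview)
Your first displayed equation --- the claimed application of Theorem~\ref{equivorb} --- fails because the domains of integration do not match. The map $\ooe_{d,a}$ records the ramified part of the fibre $\Gamma$-torsor and is intrinsic, whereas Theorem~\ref{equivorb} computes integrals over the strata $X^{\natural,\gamma}$ of the \emph{particular} quotient $X=M/\Gamma$ coming from your choice of smooth model $M=\FN(\FM^L_{n,a})$. Since $\Gamma$ acts freely on that model, the only nonempty stratum of $(\FN(\FM^L_{n,a})/\Gamma)^\natural$ is $\gamma=e$; but Lemma~\ref{unrcrit} says the actual strata $\ooe_{d,a}^{-1}(\gamma)$ are nonempty for every $\gamma\in\im(\op)(\overline{K})$. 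Equivalently, $(\FN(\FM^L_{n,a})/\Gamma)(K\llb t\rrb)$ sits strictly inside $\widehat{\FM}^d_{n,a}(K\llp t\rrp)=\FN(\widehat{\FM}^d_{n,a})(K\llb t\rrb)$, missing exactly those points where the fibre torsor is ramified. So for each $e\neq\gamma\in\im(\op)(\overline{K})$ the left side of your formula is nonzero while your right side is zero.

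The gap is mirrored in your Corollary~\ref{autab} step, which is simply false here: elements of $\Gamma\setminus\Gamma^0$ do \emph{not} preserve the connected components of $\FN(\FM^L_{n,a})_K$ --- they permute them via the injection $\Gamma'\hookrightarrow\Phi_{\FP_a}$ of diagram~\eqref{bigdia} --- so Corollary~\ref{autab} cannot be applied, and the trivial-character piece you extract is only $[\FN(\FM^L_{n,a})_K/\Gamma]$, not the full motive. What your argument actually yields is the correct but partial identity ``$\gamma=e$ summand $=$ trivial isotypical component''. The paper's proof instead treats each summand over $\im(\op)(\overline{K})$ separately: it removes the twist $T_{\gamma^{-1}}$ via Lemma~\ref{twnotw} (using that the Weil pairing is alternating, so $(\varrho^{-q}_\gamma)_*T_{\gamma^{-1}}$ is trivial), computes the integral as $\chi^{\varrho^{-q}_\gamma}_{*,c}\,\BL^{-\ord_\FN\omega^L_a}[\FN(\FM^L_{n,a})_K]$ by Lemma~\ref{qfint}, and then invokes Corollary~\ref{csum} to see that the sum over $\im(\op)(\overline{K})$ exhausts the isotypical decomposition of $[\FN(\FM^L_{n,a})_K]$.
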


\begin{proof} We fix $\gamma \in \im(\ooe_{d,a})(\overline{K}) =  \im(\op)(\overline{K}) \subset \Gamma$ (otherwise the corresponding summand on the left hand side is $0$). By construction $\ooe_{d,a}^{-1}(\gamma) \subset \underline{\widehat{\FM}^{d}_{n,a}}$ is exactly the support of the function induced by $\FM_{n,a}^{L} \times^\Gamma T_{\gamma^{-1}}$. Thus we have 
\[  \int^{\mot,\varrho^{-q}_\gamma}_{\ooe_{d,a}^{-1}(\gamma)} \underline{\FM_{n,a}^{L} \times^\Gamma T_{\gamma^{-1}} } |\omega_{orb,a}^{d}| =  \int^{\mot,\varrho^{-q}_\gamma}_{\widehat{\FM}^{d}_{n,a}} \underline{\FM_{n,a}^{L} \times^\Gamma T_{\gamma^{-1}} } |\omega_{orb,a}^{d}|. \]
But since
\[\varrho^{-q}_\gamma(\gamma^{-1}) = \langle \gamma, \gamma \rangle^q = 1, \]
we have that the $\mu_n$-torsor $(\varrho^{-q}_{\gamma})_* T_{\gamma^{-1}}$ is trivial, and therefore the functions induced by $\FM_{n,a}^{L}$ and $\FM_{n,a}^{L} \times^\Gamma T_{\gamma^{-1}} $ have the same $\varrho^{-q}_\gamma$-isotypical component by Lemma \ref{twnotw}. 
As $\FM_{n,a}^{L}$ is unramified by assumption we get from Lemma \ref{qfint} 
\[  \int^{\mot,\varrho^{-q}_\gamma}_{\underline{\widehat{\FM}^{d}_{n,a}}} \underline{\FM_{n,a}^{L} \times^\Gamma T_{\gamma^{-1}} } |\omega_{orb,a}^{d}| =   \int^{\mot,\varrho^{-q}_\gamma}_{\underline{\widehat{\FM}^{d}_{n,a}}} \underline{\FM_{n,a}^{L} } |\omega_{orb,a}^{d}| = \chi_{*,c}^{\varrho^{-q}_\gamma}\BL^{-\ord_{\FN}\omega^L_a}  [\FN(\FM^L_{n,a})_K]. \]


On the other hand we have a decomposition 
\[[\FN(\FM^L_{n,a})_K] = \sum_{\gamma \in \Gamma} \chi_{*,c}^{\varrho_\gamma} [\FN(\FM^L_{n,a})_K ] = \sum_{\gamma \in \im(\op)(\overline{K})} \chi_{*,c}^{\varrho_\gamma} [\FN(\FM^L_{n,a})_K ],  \]
where the last equality follows from Corollary \ref{csum}. As the order of $\gamma$ divides $n$ an $q$ is prime to $n$, the map $\gamma \mapsto \gamma^{-q}$ induces a bijection on $\im(\op)$ and the proposition follows. \end{proof}

\begin{proposition}[Ramified case]\label{racase} If $\FM_{n,a}^{L}$ is not unramified we have  
\[\sum_{\gamma \in \Gamma} \int^{\mot,\varrho^{-q}_\gamma}_{\ooe_{d,a}^{-1}(\gamma)} \underline{\FM_{n,a}^{L} \times^\Gamma T_{\gamma^{-1}} } |\omega_{orb,a}^{d}|=0.\]
\end{proposition}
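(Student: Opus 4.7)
The plan is to show that each summand on the left-hand side vanishes individually, by exploiting the orthogonality between the Weil pairing and the image of the connecting map. First I would fix $\gamma \in \Gamma$ and observe that the integral $I_\gamma := \int^{\mot,\varrho^{-q}_\gamma}_{\ooe_{d,a}^{-1}(\gamma)} \underline{\FM_{n,a}^{L} \times^\Gamma T_{\gamma^{-1}} } |\omega_{orb,a}^{d}|$ is automatically zero for $\gamma$ outside $C := \im(\ooe_{d,a})(\overline{K})$, because the integration domain is then empty. Using Lemma \ref{unrcrit}, the assumption that $\FM^L_{n,a}$ is not unramified translates precisely into the statement that $C$ is a \emph{non-trivial} coset of the subgroup $\im(\bar{\partial})(\overline{K}) \subset \Gamma$; in particular $C \cap \im(\bar\partial)(\overline K) = \emptyset$.

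For $\gamma \in C$, the twist $\FM' := \FM^L_{n,a} \times^\Gamma T_{\gamma^{-1}}$ is an \emph{unramified} $\FP_a$-torsor, since twisting by $T_{\gamma^{-1}}$ cancels the ramified part of the torsor class over $\overline{K}$. I would then repeat the étale-descent computation used in Proposition \ref{unrc}: apply Lemma \ref{qfint} to the étale $\Gamma$-quotient $\FN(\FM') \to \FN(\FM')/\Gamma$, whose generic fibre recovers $\FM' \to \widehat{\FM}^d_{n,a}$ and whose underlying subassignment on $\widehat{\FM}^d_{n,a}$ is precisely $\ooe_{d,a}^{-1}(\gamma)$. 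This gives, up to an $\BL$-power and a twist that disappears after taking the $\varrho^{-q}_\gamma$-isotypical component by Lemma \ref{twnotw}, an identity of the form
\[ I_\gamma = \BL^{-\ord_\FN\omega^L_a}\,\chi_{*,c}^{\varrho^{-q}_\gamma}\bigl([\FN(\FM')_K]\bigr). \]

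The final step is to argue that this motivic character sum is zero. Since $\FN(\FM')$ is a $\FN(\FP_a)$-torsor and $\Gamma \subset \FP_a$ acts through $\FN(\FP_a)$, the subgroup $\Gamma^0$ acts by translations within each connected component of $\FN(\FM')_K$; by Corollary \ref{autab} this action is trivial on motives, so the $\Gamma$-action on $[\FN(\FM')_K]$ factors through $\Gamma/\Gamma^0$. Consequently $\chi_{*,c}^{\varrho^{-q}_\gamma}([\FN(\FM')_K]) = 0$ whenever $\varrho^{-q}_\gamma$ is non-trivial on $\Gamma^0$; but by Lemma \ref{orthor} this triviality is equivalent to $\gamma^{-q} \in \im(\bar\partial)(\overline K)$, which, since $q$ is prime to $n$, is equivalent to $\gamma \in \im(\bar\partial)(\overline K)$. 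As noted, this contradicts $\gamma \in C$, so every term vanishes.

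The main obstacle I anticipate is carefully justifying the second step, namely that the motivic integral over $\ooe_{d,a}^{-1}(\gamma)$ of the twisted torsor $\underline{\FM'}$ computes $\chi_{*,c}^{\varrho^{-q}_\gamma}([\FN(\FM')_K])$ up to an explicit $\BL$-power. One must track how the orbifold form $\omega^d_{orb,a}$ pulls back to the Néron model of $\FM'$, identify $\ooe_{d,a}^{-1}(\gamma)$ with the subassignment of $\FN(\FM')/\Gamma$ extending $\FM'/\Gamma \hookrightarrow \widehat{\FM}^d_{n,a}$, and verify that Lemma \ref{qfint} applies in the equivariant setting; all of this follows the template of Proposition \ref{unrc} but requires careful bookkeeping of the twist.
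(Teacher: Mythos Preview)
Your proposal is correct and follows essentially the same route as the paper: show each summand vanishes, note that for $\gamma \in \im(\ooe_{d,a})(\overline K)$ the twist $\FM' = \FM^L_{n,a} \times^\Gamma T_{\gamma^{-1}}$ is unramified, compute the integral via Lemma~\ref{qfint} in terms of $[\FN(\FM')_K]$, and then kill its $\varrho^{-q}_\gamma$-isotypical component using the orthogonality of $\Gamma^0$ and $\im(\partial)$ (the paper invokes Corollary~\ref{csum} directly, while you unpack its proof for torsors via Corollary~\ref{autab} and Lemma~\ref{orthor}, which is in fact what is needed since Corollary~\ref{csum} is stated only for $\FN(\FP)$). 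The only superfluous ingredient in your sketch is the appeal to Lemma~\ref{twnotw}: in the ramified case no un-twisting is required, since Lemma~\ref{qfint} applied to $\FN(\FM') \to \FN(\widehat{\FM}^d_{n,a})$ already produces $\chi_{*,c}^{\varrho^{-q}_\gamma}[\FN(\FM')_K]$ directly.
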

\begin{proof} We show that each summand on the left hand side is $0$. If $\gamma \notin \im(\ooe_{d,a})(\overline{K})$ this is clear, as then $\ooe_{d,a}^{-1}(\gamma)$ is empty, so we fix $\gamma \in \im(\ooe_{d,a})(\overline{K})$.

First we claim that $\FM_{n,a}^{L} \times^\Gamma T_{\gamma^{-1}}$ is unramified. To see this we pick a point $x\in \widehat{\FM}^d_{n,a}(\overline{K}\llp t\rrp)$ with $\ooe_{d,a}(x) = \gamma$. The fiber $T_x$ of the projection $\FM_{n,a}^{L} \rightarrow \widehat{\FM}^d_{n,a}$ is a $\Gamma$-torsor isomorphic to $T_\gamma$ and $\FM_{n,a}^{L} \cong \FP_a \times^{\Gamma} T_x$, since $\FM_{n,a}^{L}$ is a $\FP_a$ torsor. Hence we find over $\overline{K}\llp t\rrp$ 
\[\FM_{n,a}^{L} \times^\Gamma T_{\gamma^{-1}} \cong \left(\FP_a \times^{\Gamma} T_x \right) \times^\Gamma T_{\gamma^{-1}} \cong \FP_a,\]
which shows that  $\FM_{n,a}^{L} \times^\Gamma T_{\gamma^{-1}}$ is unramified. By Lemma \ref{qfint} we then get

\[\int^{\mot,\varrho^{-q}_\gamma}_{\ooe_{d,a}^{-1}(\gamma)} \underline{\FM_{n,a}^{L} \times^\Gamma T_{\gamma^{-1}} } |\omega_{orb,a}^{d}| = \chi_{*,c}^{\varrho^{-q}_\gamma}\BL^{-\ord_{\FN}\omega_{\FP,a}}[\FN(\FM_{n,a}^{L} \times^\Gamma T_{\gamma^{-1}})_K ].\]

By Lemma \ref{unrcrit}, $\gamma$ and thus also $\gamma^{-q}$ are not in $\im(\op)(\overline{K})$, and thus Corollary \ref{csum} implies 
\[\chi_{*,c}^{\varrho^{-q}_\gamma}[\FN(\FM_{n,a}^{L} \times^\Gamma T_{\gamma^{-1}})_K]=0,\]
which is what we needed to show.
\end{proof}

\begin{proof}[Proof of Theorem \ref{fmt}] We just need to summarize the previous calculations. First if $a\in\FA^\flat$ is such that $\FM_{n,a}^{L}$ is not unramified we deduce form Propositions \ref{lhseasy} and \ref{racase}, that both side of \ref{fibercomp} are $0$. If $\FM_{n,a}^{L}$ is unramified we get similarly, that both sides of \ref{fibercomp} equal $\BL^{-\ord_{\FN}\omega^L_a} [\FN(\FM^L_{n,a})_K]$ by Propositions \ref{lhseasy} and \ref{unrc}. This proves Theorem \ref{fmt} which in turn implies Theorem \ref{mainthm}.
\end{proof}

\appendix
\section{Comparison of gerbes}

For the whole appendix we assume $k= \BC$ and use the notation of Section \ref{sec5}. In particular $L$ denotes a line bundle of degree $d$ on $C$ and $n$ is a positive integer prime to $d$. We further fix a point $c \in C$.

We start by recalling the construction in \cite[Section 3]{MR1990670} of $\mu_n$-gerbes $\alpha_L$ and $\wh{\alpha}_L$ on $\M^L_n$ and $\widehat{\BM}_n^d$ respectively. Since we assume $(n,d)=1$ there exists a universal $L$-twisted Higgs bundle 
\[ (\bs{\Es},\bs{\theta}) \rightarrow \M^L_n \times C,\]
and we denote the restriction to $ \M^L_n \times \ \{c\}$ by the same letters. The gerbe $\alpha_L$ is then defined as the gerbe of liftings of the projectivization $\BP\bs{\Es}$ of $\bs{\Es}$ to an honest $\SL_n$-bundle, i.e. for any \'etale neighborhood $p:U \to \M^L_n$ the category $\alpha_L(U)$ consists of pairs $(F,\phi)$ with $F \to U$ a vector bundle of rank $n$ and trivial determinant and $\phi: \BP F \xrightarrow{\sim} p^*\BP \bs{\Es}$ an isomorphism. Tensoring by $\mu_n$-torsors gives $\alpha_L$ the structure of a $\mu_n$-gerbe. 

Now $\wh{\alpha}_L$ on the quotient stack $\widehat{\BM}_n^d = [\M^L_n / \Gamma]$ is simply $\alpha_L$ together with a $\Gamma$-equivariant structure. This structure is induced from the $\Gamma$-equivariant structure on $\BP \bs{\Es}$ given for any $\gamma \in \Gamma=\Pic^0(C)[n]$ by tensoring with the corresponding line bundle $L_\gamma$ and thinking of $\BP \bs{\Es}$ as parametrizing lines in $\bs{\Es}$. Pulling back liftings along the map $\BP \bs{\Es} \to \BP \bs{\Es}$ induced by $\gamma$ defines the required action on $\alpha_L$.

Any $\gamma \in \Gamma$ induces a $\Gamma$-equivariant automorphism of $\alpha_{L|\M^{L,\gamma}_n}$ i.e. a $\Gamma$-equivariant $\mu_n$-torsor $T_{L,\gamma}$ on  $\M^{L,\gamma}_n$. The following proposition generalizes and relies on \cite[Proposition 8.1]{MR1990670}

\begin{aproposition} The $\mu_n$-torsor $T_{L,\gamma}$ is trivial and the $\Gamma$-equivariant structure is given by the character $\varrho^{-q}_\gamma$, where $q$ is the multiplicative inverse of $d$ modulo $n$.
\end{aproposition}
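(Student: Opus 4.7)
The plan is to reduce the claim to a pointwise construction at fixed points and then globalize it, extending the argument of \cite[Proposition 8.1]{MR1990670} from $n$ prime to arbitrary $n$. At a fixed point $(E,\theta) \in \M_n^{L,\gamma}$, stability of $E$ furnishes an isomorphism $\psi \colon E \otimes L_\gamma \xrightarrow{\sim} E$ unique up to a global scalar. Fixing a trivialization $L_\gamma^n \cong \mathcal{O}_C$ (available since $\gamma$ is $n$-torsion in $\Pic^0 C$) one can form $\psi^n$ as an automorphism of $E$, which is a scalar by stability. The fiber of $T_{L,\gamma}$ at $(E,\theta)$ is identified with the $\mu_n$-torsor of those $\psi$ for which $\psi^n$ equals the identity, via the dictionary between the $\gamma$-action on $\alpha_L$ at $(E,\theta)$ and such normalized isomorphisms.

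To prove triviality, I would globalize this description using the universal $L$-twisted Higgs bundle $\bs{\Es}$. Restricted to $\M_n^{L,\gamma}\times C$, it comes with an isomorphism $\bs{\Es} \otimes L_\gamma \cong \bs{\Es}$ unique up to a scalar function on $\M_n^{L,\gamma}$. Imposing the condition $\psi^n = \mathrm{id}$ pins this scalar down up to an element of $\mu_n$, yielding a global section of $T_{L,\gamma}$ and hence its triviality.

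For the character, I would compute how this section transforms under the $\Gamma$-action. Twisting $(E,\theta)$ by $L_{\gamma'}$ replaces $\psi$ by $\psi \otimes \mathrm{id}_{L_{\gamma'}}$; comparing with the normalized section at the twisted point yields two contributions. The commutation of the tensor factors $L_\gamma$ and $L_{\gamma'}$ in the $n$-fold iterate, interpreted through the fact that the Weil pairing $\langle \gamma, \gamma'\rangle$ measures the failure of the canonical trivializations of $L_\gamma^n$ and $L_{\gamma'}^n$ to be compatible, produces a factor of $\varrho_\gamma$. A further contribution comes from the restriction $L|_c$ at the marked point entering the normalization: since $L$ has degree $d$, inverting its role introduces the exponent $q$ with $dq \equiv 1 \pmod n$, and the signs conspire to give precisely the character $\varrho_\gamma^{-q}$.

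The main obstacle is the bookkeeping in the last step. The prime-order case in \cite[Proposition 8.1]{MR1990670} avoids much of the complication because $\langle\gamma\rangle$ is then of order $n$, while the general case requires carefully disentangling the Weil pairing contribution from the degree-$d$ contribution to ensure that the exponent is exactly $-q$. No genuinely new idea is needed beyond the prime case, but the sign and exponent must be tracked with care through the cocycle computation.
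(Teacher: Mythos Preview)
Your proposal is correct in outline but takes a different, more laborious route than the paper on both points.

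For triviality, the paper's argument is shorter: since $(E,\theta)$ is stable, the isomorphism $E\otimes L_\gamma \cong E$ is a scalar, hence $\gamma$ acts as the \emph{identity} on $\BP\bs{\Es}|_{\M^{L,\gamma}_n}$. Any local lifting of $\BP\bs{\Es}$ is therefore fixed by $\gamma$, and $T_{L,\gamma}$ is trivial. Your construction of a global section via the normalization $\psi^n=\id$ works but is unnecessary once one observes that the automorphism of the torsor is literally trivial.

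For the character, the approaches genuinely diverge. You propose to redo the Hausel--Thaddeus cocycle computation for arbitrary $\gamma$, tracking the Weil pairing and degree-$d$ contributions directly. The paper instead \emph{reduces} to the already-proven maximal-order case: since $\Gamma\cong(\BZ/n\BZ)^{2g}$, any $\gamma$ of order $m$ can be written as $\tilde\gamma^{\,r}$ with $\tilde\gamma$ of order $n$ and $r=n/m$, giving an inclusion $\M^{L,\tilde\gamma}_n\subset\M^{L,\gamma}_n$. Because $T_{L,\gamma}$ is trivial, its $\Gamma$-equivariant structure is a single character, determined by restriction to this smaller locus; there \cite[Proposition 8.1]{MR1990670} gives $(\varrho^{-q}_{\tilde\gamma})^r=\varrho^{-q}_\gamma$. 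This sidesteps entirely the bookkeeping you identify as the main obstacle. Your direct computation would give a self-contained argument not relying on the existence of an order-$n$ lift; the paper's reduction buys brevity.
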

\begin{proof} Let $(E,\theta) \in \M^{L,\gamma}_n$. Since $(n,d)=1$ the Higgs bundle $(E,\theta)$ is stable and therefore the automorphism induced by $E \otimes L_\gamma \cong E$ given by a scalar. This implies that $\gamma$ acts trivially on $\BP \bs{\Es}_{|\M^{L,\gamma}_n}$ and thus preserves any lifting of $\BP \bs{\Es}$ over $\M^{L,\gamma}_n$. Hence $T_{L,\gamma}$ is trivial.

By \cite[Proposition 8.1]{MR1990670}, the $\Gamma$-equivariant structure on $T_{L,\gamma}$ is given by the character $\varrho^{-q}_\gamma$ in the case when $\gamma$ has maximal order $n$ (the extra assumption that $n$ is prime is only used later in the section of \textit{loc. cit.}). 

For a general $\gamma \in \Gamma$ let $m = \ord(\gamma)$ and $r=n/m$. Since $\Gamma \cong (\BZ/n\BZ)^{2g}$ there exists a $\tilde{\gamma}$ of order $n$ such that $\gamma = \tilde{\gamma}^r$ and we have $\M^{L,\tilde{\gamma}}_n \subset \M^{L,\gamma}_n$. Since $T_{L,\gamma}$ is trivial, its $\Gamma$-equivariant structure is determined by its restriction to $\M^{L,\tilde{\gamma}}_n$, where by 
\cite[Proposition 8.1]{MR1990670} it is given by $(\varrho^{-q}_{\tilde{\gamma}})^r = \varrho^{-q}_\gamma$. 
\end{proof}

\bibliographystyle{abbrv}
\bibliography{master}
\end{document}